\providecommand{\tabularnewline}{\\}
\def\RSsubtxt{section~}\newref{sub}{name = \RSsubtxt}}
\def\RSthmtxt{theorem~}\newref{thm}{name = \RSthmtxt}}
\def\RSlemtxt{lemma~}\newref{lem}{name = \RSlemtxt}}
\numberwithin{equation}{section}
\numberwithin{figure}{section}
\numberwithin{table}{section}
\theoremstyle{plain}
\newtheorem{thm}{\protect\theoremname}[section]
  \theoremstyle{definition}
  \newtheorem{defn}[thm]{\protect\definitionname}
  \theoremstyle{remark}
  \newtheorem{rem}[thm]{\protect\remarkname}
  \theoremstyle{plain}
  \newtheorem{lem}[thm]{\protect\lemmaname}
  \theoremstyle{plain}
  \newtheorem{cor}[thm]{\protect\corollaryname}
  \theoremstyle{plain}
  \newtheorem{prop}[thm]{\protect\propositionname}
  \theoremstyle{remark}
  \newtheorem*{notation*}{\protect\notationname}
  \theoremstyle{plain}
  \newtheorem{question}[thm]{\protect\questionname}
  \theoremstyle{definition}
  \newtheorem{example}[thm]{\protect\examplename}
  \theoremstyle{remark}
  \newtheorem*{acknowledgement*}{\protect\acknowledgementname}
  \providecommand{\acknowledgementname}{Acknowledgement}
  \providecommand{\corollaryname}{Corollary}
  \providecommand{\definitionname}{Definition}
  \providecommand{\examplename}{Example}
  \providecommand{\lemmaname}{Lemma}
  \providecommand{\notationname}{Notation}
  \providecommand{\propositionname}{Proposition}
  \providecommand{\questionname}{Question}
  \providecommand{\remarkname}{Remark}
\providecommand{\theoremname}{Theorem}
\begin{document}

\title{Nonuniform sampling, reproducing kernels, and the associated Hilbert
spaces}

\author{Palle Jorgensen and Feng Tian}

\address{(Palle E.T. Jorgensen) Department of Mathematics, The University
of Iowa, Iowa City, IA 52242-1419, U.S.A. }

\email{palle-jorgensen@uiowa.edu}

\urladdr{http://www.math.uiowa.edu/\textasciitilde{}jorgen/}

\address{(Feng Tian) Department of Mathematics, Hampton University, Hampton,
VA 23668, U.S.A.}

\email{feng.tian@hamptonu.edu}

\subjclass[2000]{Primary 47L60, 46N30, 46N50, 42C15, 65R10, 31C20, 62D05, 94A20, 39A12;
Secondary 46N20, 22E70, 31A15, 58J65}

\keywords{Computational harmonic analysis, Hilbert space, reproducing kernel
Hilbert space, unbounded operators, discrete analysis, interpolation,
reconstruction, infinite matrices, binomial coefficients, Gaussian
free fields, graph Laplacians, distribution of point-masses, Green's
function (graph Laplacians), non-uniform sampling, transforms, (discrete)
Ito-isometries, optimization, infinite determinants, moments, covariance,
interpolation.}

\maketitle
\pagestyle{myheadings}
\markright{}
\begin{abstract}
In a general context of positive definite kernels $k$, we develop
tools and algorithms for sampling in reproducing kernel Hilbert space
$\mathscr{H}$ (RKHS). With reference to these RKHSs, our results
allow inference from samples; more precisely, reconstruction of an
``entire'' (or global) signal, a function $f$ from $\mathscr{H}$,
via generalized interpolation of $f$ from partial information obtained
from carefully chosen distributions of sample points. We give necessary
and sufficient conditions for configurations of point-masses $\delta_{x}$
of sample-points $x$ to have finite norm relative to the particular
RKHS $\mathscr{H}$ considered. When this is the case, and the kernel
$k$ is given, we obtain an induced positive definite kernel $\left\langle \delta_{x},\delta_{y}\right\rangle _{\mathscr{H}}$.
We perform a comparison, and we study when this induced positive definite
kernel has $l^{2}$ rows and columns. The latter task is accomplished
with the use of certain symmetric pairs of operators in the two Hilbert
spaces, $l^{2}$ on one side, and the RKHS $\mathscr{H}$ on the other.
A number of applications are given, including to infinite network
systems, to graph Laplacians, to resistance metrics, and to sampling
of Gaussian fields. 
\end{abstract}

\tableofcontents{}

\section{Introduction}

In the theory of non-uniform sampling, one studies Hilbert spaces
consisting of signals, understood in a very general sense. One then
develops analytic tools and algorithms, allowing one to draw inference
for an ``entire'' (or global) signal from partial information obtained
from carefully chosen distributions of sample points. While the better
known and classical sampling algorithms (Shannon and others) are based
on interpolation, modern theories go beyond this. An early motivation
is the work of Henry Landau. In this setting, it is possible to make
precise the notion of ``average sampling rates'' in general configurations
of sample points. Our present study, turns the tables. We start with
the general axiom system of positive definite kernels and their associated
reproducing kernel Hilbert spaces (RKHSs), or \emph{relative} RKHSs.
With some use of metric geometry and of spectral theory for operators
in Hilbert space, we are then able to obtain sampling theorems for
a host of non-uniform point configurations. The modern theory of non-uniform
sampling is vast, and it specializes into a variety of sub-areas.
The following papers (and the literature cited there) will give an
idea of the diversity of points of view: \cite{MR3172778,MR3189198,MR3268657,MR2451766,MR0222554}.

A reproducing kernel Hilbert space (RKHS) is a Hilbert space $\mathscr{H}$
of functions on a prescribed set, say $V$, with the property that
point-evaluation for functions $f\in\mathscr{H}$ is continuous with
respect to the $\mathscr{H}$-norm. They are called kernel spaces,
because, for every $x\in V$, the point-evaluation for functions $f\in\mathscr{H}$,
$f\left(x\right)$ must then be given as a $\mathscr{H}$-inner product
of $f$ and a vector $k_{x}$ in $\mathscr{H}$; called the kernel,
i.e., $f\left(x\right)=\left\langle k_{x},f\right\rangle _{\mathscr{H}}$,
$\forall f\in\mathscr{H}$, $x\in V$. 

There is a related reproducing kernel notion called ``relative:\textquotedblright{}
This means that increments have kernel representations. In detail:
Consider functions $f$ in $\mathscr{H}$, but suppose instead that,
for every pair of points $x,y$ in $V$, each of the differences $f\left(x\right)-f\left(y\right)$
can be represented by a kernel from $\mathscr{H}$. We then say that
$\mathscr{H}$ is a relative RKHS. We shall study both in our paper.
The \textquotedblleft relative\textquotedblright{} variant is of more
recent vintage, and it is used in the study of electrical networks
(voltage differences, see \lemref{dipp}); and in analysis of Gaussian
processes such as Gaussian fields, sect \ref{sub:bm} and \ref{sub:Bridge}.

The RKHSs have been studied extensively since the pioneering papers
by Aronszajn \cite{Aro43,Aro48}. They further play an important role
in the theory of partial differential operators (PDO); for example
as Green's functions of second order elliptic PDOs \cite{Nel57,HKL14}.
Other applications include engineering, physics, machine-learning
theory \cite{KH11,SZ09,CS02}, stochastic processes \cite{AD93,ABDdS93,AD92,AJSV13,AJV14},
numerical analysis, and more \cite{MR2089140,MR2607639,MR2913695,MR2975345,MR3062405,MR3091062,MR3101840,MR3201917,Shawe-TaylorCristianini200406,SchlkopfSmola200112}.
But the literature so far has focused on the theory of kernel functions
defined on continuous domains, either domains in Euclidean space,
or complex domains in one or more variables. For these cases, the
Dirac $\delta_{x}$ distributions do not have finite $\mathscr{H}$-norm.
But for RKHSs over discrete point distributions, it is reasonable
to expect that the Dirac $\delta_{x}$ functions will in fact have
finite $\mathscr{H}$-norm.

An illustration from \emph{neural networks}: An Extreme Learning Machine
(ELM) is a neural network configuration in which a hidden layer of
weights are randomly sampled \cite{RW06}, and the object is then
to determine analytically resulting output layer weights. Hence ELM
may be thought of as an approximation to a network with infinite number
of hidden units.

The main results in our paper include \thmref{pm}, \corref{sp},
\thmref{BB}, \thmref{dd}, \corref{CD}, and \thmref{mu} where we
give necessary and sufficient conditions for the point-masses to have
finite norm relative to the particular RKHS $\mathscr{H}$ considered.
When this is the case, we obtain an induced positive definite kernel
$\left\langle \delta_{x},\delta_{y}\right\rangle _{\mathscr{H}}$.
In \secref{SPA}, we study when this induced positive definite kernel
has $l^{2}$ rows and columns. The latter task is accomplished with
the use of certain symmetric pairs of operators in the two Hilbert
spaces, $l^{2}$ on one side, and the RKHS $\mathscr{H}$ on the other.
In \secref{mspa}, we study the cases when the associated symmetric
pair is maximal. The results from Sections \ref{sec:SPA}-\ref{sec:mspa}
are then applied in \secref{egs} to the study of admissible distributions
of discrete sample points for Brownian motion, and for related Gaussian
fields. We have a separate subsection \ref{sub:bion} discussing the
RKHS constructed canonically from the binomial coefficients.

\section{Reproducing kernel Hilbert spaces (RKHSs), and relative RKHSs}

Here we consider \emph{the discrete case}, i.e., RKHSs of functions
defined on a prescribed countable infinite discrete set $V$. We are
concerned with a characterization of those RKHSs $\mathscr{H}$ which
contain the Dirac masses $\delta_{x}$ for all points $x\in V$. Of
the examples and applications where this question plays an important
role, we emphasize three: (i) \emph{discrete Brownian motion-Hilbert
spaces}, i.e., discrete versions of the Cameron-Martin Hilbert space;
(ii) \emph{energy-Hilbert spaces} corresponding to graph-Laplacians;
and finally (iii) RKHSs generated by \emph{binomial coefficients}. 

In general when reproducing kernels and their Hilbert spaces are used,
one ends up with functions on a suitable set, and so far we feel that
the dichotomy \emph{discrete} vs \emph{continuous} has not yet received
sufficient attention. After all, a choice of sampling points in relevant
optimization models based on kernel theory suggests the need for a
better understanding of point masses as they are accounted for in
the RKHS at hand. In broad outline, this is a leading theme in our
paper.

The two definitions below, and \lemref{hf} are valid more generally
for the setting when $V$ is an arbitrary set. But we have nonetheless
restricted our focus to the case when $V$ is assumed countably infinite.
The reason for this will become evident in \defref{dmp}, and in \lemref{proj2},
\corref{proj1}, and \thmref{pm}, to follow.
\begin{defn}
Let $V$ be a countable and infinite set, and $\mathscr{F}\left(V\right)$
the set of all \emph{finite} subsets of $V$. A function $k:V\times V\longrightarrow\mathbb{C}$
is said to be \emph{positive definite}, if 
\begin{equation}
\underset{\left(x,y\right)\in F\times F}{\sum\sum}k\left(x,y\right)\overline{c_{x}}c_{y}\geq0\label{eq:pd1}
\end{equation}
holds for all coefficients $\{c_{x}\}_{x\in F}\subset\mathbb{C}$,
and all $F\in\mathscr{F}\left(V\right)$. 
\end{defn}

\begin{defn}
\label{def:d1}Fix a set $V$, countable infinite. 
\begin{enumerate}
\item For all $x\in V$, set 
\begin{equation}
k_{x}:=k\left(\cdot,x\right):V\longrightarrow\mathbb{C}\label{eq:pd2}
\end{equation}
as a function on $V$. 
\item Let $\mathscr{H}:=\mathscr{H}\left(k\right)$ be the Hilbert-completion
of the $span\left\{ k_{x}\mid x\in V\right\} $, with respect to the
inner product 
\begin{equation}
\left\langle \sum c_{x}k_{x},\sum d_{y}k_{y}\right\rangle _{\mathscr{H}}:=\sum\sum\overline{c_{x}}d_{y}k\left(x,y\right)\label{eq:pd3}
\end{equation}
modulo the subspace of functions of zero $\mathscr{H}$-norm. $\mathscr{H}$
is then a reproducing kernel Hilbert space (RKHS), with the reproducing
property:
\begin{equation}
\left\langle k_{x},\varphi\right\rangle _{\mathscr{H}}=\varphi\left(x\right),\;\forall x\in V,\quad\forall\varphi\in\mathscr{H}.\label{eq:pd31}
\end{equation}

\item If $F\in\mathscr{F}\left(V\right)$, set $\mathscr{H}_{F}=\text{closed\:\ span}\{k_{x}\}_{x\in F}\subset\mathscr{H}$,
(closed is automatic if $F$ is finite.) And set 
\begin{equation}
P_{F}:=\text{the orthogonal projection onto \ensuremath{\mathscr{H}_{F}}}.\label{eq:pd4}
\end{equation}

\item For $F\in\mathscr{F}\left(V\right)$, set 
\begin{equation}
K_{F}:=\left(k\left(x,y\right)\right)_{\left(x,y\right)\in F\times F}\label{eq:pd5}
\end{equation}
as a $\#F\times\#F$ matrix. 
\end{enumerate}
\end{defn}
\begin{rem}
The summations in (\ref{eq:pd3}) are all finite. Starting with finitely
supported summations in (\ref{eq:pd3}), the RKHS $\mathscr{H}\left(=\mathscr{H}\left(k\right)\right)$
is then obtained by Hilbert space completion. We use physicists' convention,
so that the inner product is conjugate linear in the first variable,
and linear in the second variable.
\end{rem}
The following result is known; and it follows from the definitions
above. 
\begin{lem}
\label{lem:hf}Let $k:V\times V\longrightarrow\mathbb{C}$ be positive
definite, and let $\mathscr{H}$ be the corresponding RKHS. Let $f$
be an arbitrary function on $V$. Then $f$ is in $\mathscr{H}$ if
and only if there is a constant $C=C_{f}<\infty$ such that, for every
finitely supported function $\xi:V\longrightarrow\mathbb{C}$, we
have the estimate
\[
\left|\sum_{x\in V}\overline{\xi\left(x\right)}f\left(x\right)\right|^{2}\leq C\underset{x,y\in V}{\sum\sum}\overline{\xi\left(x\right)}\xi\left(y\right)k\left(x,y\right)
\]
with the constant $C=C_{f}$ independent of $\xi$.
\end{lem}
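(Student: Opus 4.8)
The plan is to recognize both sides of the claimed inequality as Hilbert-space quantities attached to a single auxiliary vector, and then to run Cauchy--Schwarz in one direction and the Riesz representation theorem in the other. For a finitely supported $\xi:V\longrightarrow\mathbb{C}$, set $\varphi_\xi:=\sum_{x\in V}\xi(x)\,k_x\in\mathscr{H}$. By the definition of the inner product \eqref{eq:pd3}, its squared norm is exactly the right-hand side,
\[
\left\|\varphi_\xi\right\|_{\mathscr{H}}^2=\underset{x,y\in V}{\sum\sum}\overline{\xi(x)}\,\xi(y)\,k(x,y),
\]
so the inequality to be proved reads $\bigl|\sum_{x}\overline{\xi(x)}f(x)\bigr|^2\le C\,\|\varphi_\xi\|_{\mathscr{H}}^2$.

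For the forward implication, assume $f\in\mathscr{H}$. Using the reproducing property \eqref{eq:pd31}, $\langle k_x,f\rangle_{\mathscr{H}}=f(x)$, so by conjugate-linearity in the first slot $\langle\varphi_\xi,f\rangle_{\mathscr{H}}=\sum_{x}\overline{\xi(x)}f(x)$. Cauchy--Schwarz then gives $\bigl|\sum_{x}\overline{\xi(x)}f(x)\bigr|^2=\left|\langle\varphi_\xi,f\rangle_{\mathscr{H}}\right|^2\le\|f\|_{\mathscr{H}}^2\,\|\varphi_\xi\|_{\mathscr{H}}^2$, which is the estimate with $C=\|f\|_{\mathscr{H}}^2$.

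For the converse, suppose the estimate holds with some finite $C$. I would define a linear functional on the dense subspace $\mathrm{span}\{k_x\}$ by $L(\varphi_\xi):=\sum_{x}\xi(x)\overline{f(x)}$, which is linear in $\xi$ and hence in $\varphi_\xi$. The subtle point --- and the step I expect to be the main obstacle --- is \emph{well-definedness}: since $\mathscr{H}$ is a completion taken modulo vectors of zero norm, one must check that the value $L(\varphi_\xi)$ depends only on $\varphi_\xi$ and not on the representing $\xi$. This is precisely what the hypothesis supplies: if $\|\varphi_\xi\|_{\mathscr{H}}=0$ then the estimate forces $\sum_{x}\overline{\xi(x)}f(x)=0$, so $L$ descends to the quotient, and the same estimate shows $\left|L(\varphi_\xi)\right|\le\sqrt{C}\,\|\varphi_\xi\|_{\mathscr{H}}$, i.e.\ $L$ is bounded with $\|L\|\le\sqrt{C}$. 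Extending $L$ by density to all of $\mathscr{H}$ and invoking Riesz, there is $g\in\mathscr{H}$ with $L(\varphi)=\langle g,\varphi\rangle_{\mathscr{H}}$ for every $\varphi$. Finally, taking $\xi=\delta_x$ gives $\varphi_\xi=k_x$ and hence $\overline{f(x)}=L(k_x)=\langle g,k_x\rangle_{\mathscr{H}}=\overline{g(x)}$ by the reproducing property, so $f=g\in\mathscr{H}$. The bookkeeping of conjugations under the physicists' convention is the only thing requiring care, and it also shows that the minimal admissible constant is $C=\|f\|_{\mathscr{H}}^2$.
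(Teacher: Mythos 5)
Your proof is correct, and it is the standard argument for this fact; the paper itself offers no proof of Lemma \ref{lem:hf} (it is merely stated as ``known'' and said to follow from the definitions), so your writeup supplies exactly the details the authors omit. Both directions are handled properly --- Cauchy--Schwarz with the reproducing property for necessity, and the bounded-functional/Riesz argument for sufficiency --- and you correctly identify and dispose of the one genuinely delicate point, namely that the hypothesis is what makes the functional well defined on the quotient by the null vectors of the pre-Hilbert form.
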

It follows from the above that reproducing kernel Hilbert spaces (RKHSs)
arise from a given positive definite kernel $k$, a corresponding
pre-Hilbert form; and then a Hilbert-completion. The question arises:
\textquotedblleft What are the functions in the completion?\textquotedblright{}
Now, before completion, the functions are as specified in \defref{d1},
but the Hilbert space completions are subtle; they are classical Hilbert
spaces of functions, not always transparent from the naked kernel
$k$ itself. Examples of classical RKHSs: Hardy spaces or Bergman
spaces (for complex domains), Sobolev spaces and Dirichlet spaces
\cite{MR3054607,MR2892621,MR2764237} (for real domains, or for fractals),
band-limited $L^{2}$ functions (from signal analysis), and Cameron-Martin
Hilbert spaces from Gaussian processes (in continuous time domain).

Our focus here is on discrete analogues of the classical RKHSs from
real or complex analysis. These discrete RKHSs in turn are dictated
by applications, and their features are quite different from those
of their continuous counterparts.
\begin{defn}
\label{def:dmp}The RKHS $\mathscr{H}=\mathscr{H}\left(k\right)$
is said to have the \emph{discrete mass} property ($\mathscr{H}$
is called a \emph{discrete RKHS}), if $\delta_{x}\in\mathscr{H}$,
for all $x\in V$. Here, $\delta_{x}\left(y\right)=\begin{cases}
1 & \text{if }x=y\\
0 & \text{if \ensuremath{x\neq y}}
\end{cases}$, i.e., the Dirac mass at $x\in V$. \end{defn}
\begin{lem}
\label{lem:proj1}\label{lem:proj}Let $F\in\mathscr{F}\left(V\right)$,
$x_{1}\in F$. Assume $\delta_{x_{1}}\in\mathscr{H}$. Then 
\begin{equation}
P_{F}\left(\delta_{x_{1}}\right)\left(\cdot\right)=\sum_{y\in F}\left(K_{F}^{-1}\delta_{x_{1}}\right)\left(y\right)k_{y}\left(\cdot\right).\label{eq:pd6}
\end{equation}
\end{lem}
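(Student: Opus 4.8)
The plan is to determine $P_{F}\delta_{x_{1}}$ by solving the normal equations attached to the finite-dimensional subspace $\mathscr{H}_{F}=\mathrm{span}\{k_{y}\}_{y\in F}$. Since $\delta_{x_{1}}\in\mathscr{H}$ by hypothesis, the projection $P_{F}\delta_{x_{1}}$ is a well-defined vector of $\mathscr{H}_{F}$, so I may write
\[
P_{F}\delta_{x_{1}}=\sum_{y\in F}c_{y}\,k_{y}
\]
for a (yet unknown) coefficient vector $c=\left(c_{y}\right)_{y\in F}\in\mathbb{C}^{F}$. The whole argument amounts to identifying $c=K_{F}^{-1}\delta_{x_{1}}$, where on the right $\delta_{x_{1}}$ is read as the standard basis vector $e_{x_{1}}\in\mathbb{C}^{F}$.

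First I would test $P_{F}\delta_{x_{1}}$ against each kernel vector $k_{x}$, $x\in F$. Because $k_{x}\in\mathscr{H}_{F}$ and $P_{F}$ is a self-adjoint idempotent with $P_{F}k_{x}=k_{x}$, one has $\left\langle k_{x},P_{F}\delta_{x_{1}}\right\rangle _{\mathscr{H}}=\left\langle P_{F}k_{x},\delta_{x_{1}}\right\rangle _{\mathscr{H}}=\left\langle k_{x},\delta_{x_{1}}\right\rangle _{\mathscr{H}}$. I then evaluate both ends of this chain: by the reproducing property (\ref{eq:pd31}) the right-hand side equals $\delta_{x_{1}}\left(x\right)$, i.e. the $x$-entry of $e_{x_{1}}$; and, expanding $P_{F}\delta_{x_{1}}=\sum_{y}c_{y}k_{y}$ and using $\left\langle k_{x},k_{y}\right\rangle _{\mathscr{H}}=k\left(x,y\right)$ from (\ref{eq:pd3}), the left-hand side equals $\sum_{y\in F}c_{y}\,k\left(x,y\right)=\left(K_{F}c\right)\left(x\right)$. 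Letting $x$ range over $F$ gives the linear system $K_{F}c=\delta_{x_{1}}$, which are exactly the normal equations; solving yields $c=K_{F}^{-1}\delta_{x_{1}}$, and substituting back produces (\ref{eq:pd6}).

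The one delicate point, and the step I expect to carry the actual content rather than the bookkeeping, is the invertibility of the Gram matrix $K_{F}$ that is implicit in writing $K_{F}^{-1}$. Positive definiteness of $k$ gives only $K_{F}\geq0$, so I must rule out a nontrivial kernel, equivalently establish that $\{k_{y}\}_{y\in F}$ is linearly independent in $\mathscr{H}$. Here the RKHS structure does the work: if $\sum_{y\in F}\overline{a_{y}}\,k_{y}=0$ in $\mathscr{H}$, then pairing with $\delta_{z}$ for $z\in F$ and invoking the reproducing property collapses the sum to $\overline{a_{z}}=0$. Thus, as soon as the relevant Dirac masses lie in $\mathscr{H}$ --- which is precisely the discrete-mass setting of \defref{dmp} under which this lemma is applied --- the family $\{k_{y}\}_{y\in F}$ is linearly independent, $K_{F}$ is strictly positive definite, and $K_{F}^{-1}$ exists; the coefficient vector $c$ is then unique and (\ref{eq:pd6}) is unambiguous. (If one wished to drop invertibility, $K_{F}^{-1}$ would be read as a generalized inverse, the projection $P_{F}\delta_{x_{1}}$ itself remaining well defined as a vector of $\mathscr{H}$.)
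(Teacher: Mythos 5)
Your proposal is correct and is essentially the paper's own argument: the paper simply verifies that $\delta_{x_{1}}-\sum_{y\in F}\left(K_{F}^{-1}\delta_{x_{1}}\right)\left(y\right)k_{y}$ lies in $\mathscr{H}_{F}^{\perp}$ by pairing with the $k_{x}$, $x\in F$, which is exactly your normal-equations computation read in the other direction. Your extra paragraph on the invertibility of $K_{F}$ (linear independence of $\{k_{y}\}_{y\in F}$ once the relevant Dirac masses lie in $\mathscr{H}$) addresses a point the paper leaves implicit, and is a welcome addition rather than a deviation.
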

\begin{proof}
We check that 
\begin{equation}
\delta_{x_{1}}-\sum_{y\in F}\left(K_{F}^{-1}\delta_{x_{1}}\right)\left(y\right)k_{y}\left(\cdot\right)\in\mathscr{H}_{F}^{\perp}.\label{eq:pd7}
\end{equation}
The remaining part follows easily from this. 

(The notation $\left(\mathscr{H}_{F}\right)^{\perp}$ stands for orthogonal
complement, also denoted $\mathscr{H}\ominus\mathscr{H}_{F}=\left\{ \varphi\in\mathscr{H}\:\big|\:\left\langle f,\varphi\right\rangle _{\mathscr{H}}=0,\;\forall f\in\mathscr{H}_{F}\right\} $.)\end{proof}
\begin{rem}
A slight abuse of notations: We make formally sense of the expressions
for $P_{F}(\delta_{x})$ in (\ref{eq:pd6}) even in the case when
$\delta_{x}$ might not be in $\mathscr{H}$. For all finite $F$,
we showed that $P_{F}(\delta_{x})\in\mathscr{H}$. But for $\delta_{x}$
be in $\mathscr{H}$, we must have the additional boundedness assumption
(\ref{eq:d3}) satisfied; see \thmref{del}.\end{rem}
\begin{lem}
\label{lem:proj2}Let $F\in\mathscr{F}\left(V\right)$, $x_{1}\in F$,
then 
\begin{equation}
\left(K_{F}^{-1}\delta_{x_{1}}\right)\left(x_{1}\right)=\left\Vert P_{F}\delta_{x_{1}}\right\Vert _{\mathscr{H}}^{2}.\label{eq:pd8}
\end{equation}
\end{lem}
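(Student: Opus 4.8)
The plan is to feed the explicit formula for $P_{F}\delta_{x_{1}}$ supplied by \lemref{proj} directly into the quadratic form (\ref{eq:pd3}) that defines the $\mathscr{H}$-norm, and then to collapse the resulting double sum using the defining relation $K_{F}\bigl(K_{F}^{-1}\delta_{x_{1}}\bigr)=\delta_{x_{1}}$. First I would abbreviate $c:=K_{F}^{-1}\delta_{x_{1}}$, viewed as a vector indexed by $F$, so that \lemref{proj} reads $P_{F}\delta_{x_{1}}=\sum_{y\in F}c(y)\,k_{y}$. Since this is a \emph{finite} linear combination of the kernel functions $k_{y}$, its squared norm is computed verbatim from (\ref{eq:pd3}):
\[
\left\Vert P_{F}\delta_{x_{1}}\right\Vert _{\mathscr{H}}^{2}=\sum_{y\in F}\sum_{z\in F}\overline{c(y)}\,c(z)\,k(y,z).
\]

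Next I would recognize the inner summation as a matrix–vector product. For fixed $y$, the relation $\sum_{z\in F}k(y,z)\,c(z)=\bigl(K_{F}c\bigr)(y)$ together with $c=K_{F}^{-1}\delta_{x_{1}}$ gives $K_{F}c=\delta_{x_{1}}$, so the inner sum equals $\delta_{x_{1}}(y)$. Substituting this collapses the double sum to $\sum_{y\in F}\overline{c(y)}\,\delta_{x_{1}}(y)=\overline{c(x_{1})}=\overline{\bigl(K_{F}^{-1}\delta_{x_{1}}\bigr)(x_{1})}$. To remove the conjugate I would invoke one of two equivalent observations: the left-hand side is a squared norm, hence real, forcing $\bigl(K_{F}^{-1}\delta_{x_{1}}\bigr)(x_{1})$ to be real; or, more structurally, positive definiteness of $k$ makes $K_{F}$ Hermitian (and invertible, positive definite), so $K_{F}^{-1}$ is Hermitian and its diagonal entry $\bigl(K_{F}^{-1}\delta_{x_{1}}\bigr)(x_{1})$ is automatically real and nonnegative. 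Either way the identity (\ref{eq:pd8}) follows.

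There is no genuine obstacle here beyond bookkeeping, but two points deserve care. The first is the conjugation convention of (\ref{eq:pd3}), which is conjugate-linear in the first slot; this is exactly what produces the harmless factor $\overline{c(x_{1})}$ that must then be reconciled with reality of the diagonal entry, as above. The second, worth flagging explicitly, is that the entire computation never uses $\delta_{x_{1}}\in\mathscr{H}$: the vector $P_{F}\delta_{x_{1}}=\sum_{y\in F}c(y)\,k_{y}$ lies in $\mathscr{H}_{F}\subset\mathscr{H}$ for every finite $F$ regardless, so the norm on the left-hand side of (\ref{eq:pd8}) is unconditionally well defined, consistent with the remark following \lemref{proj}. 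This is what allows (\ref{eq:pd8}) to be used later as a concrete, finite-dimensional expression for the increasing quantities $\left\Vert P_{F}\delta_{x_{1}}\right\Vert _{\mathscr{H}}^{2}$ whose behavior as $F$ exhausts $V$ governs membership of $\delta_{x_{1}}$ in $\mathscr{H}$.
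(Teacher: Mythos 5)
Your proposal is correct and follows essentially the same route as the paper: expand $\left\Vert P_{F}\delta_{x_{1}}\right\Vert _{\mathscr{H}}^{2}$ via the quadratic form (\ref{eq:pd3}) applied to $\sum_{y\in F}\bigl(K_{F}^{-1}\delta_{x_{1}}\bigr)(y)\,k_{y}$ and collapse the double sum using $K_{F}\bigl(K_{F}^{-1}\delta_{x_{1}}\bigr)=\delta_{x_{1}}$, which is exactly the computation in (\ref{eq:pd81}). Your added care with the conjugation convention and your observation that the identity holds without assuming $\delta_{x_{1}}\in\mathscr{H}$ (unlike the paper's alternative derivation via $\left\langle P_{F}\delta_{x_{1}},\delta_{x_{1}}\right\rangle _{\mathscr{H}}$) are both accurate refinements, not deviations.
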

\begin{proof}
Setting $\zeta^{\left(F\right)}:=K_{F}^{-1}\left(\delta_{x_{1}}\right)$,
we have 
\[
P_{F}\left(\delta_{x_{1}}\right)=\sum_{y\in F}\zeta^{\left(F\right)}\left(y\right)k_{F}\left(\cdot,y\right)
\]
and for all $z\in F$, 
\begin{eqnarray}
\underset{\zeta^{\left(F\right)}\left(x_{1}\right)}{\underbrace{\sum_{z\in F}\zeta^{\left(F\right)}\left(z\right)P_{F}\left(\delta_{x_{1}}\right)\left(z\right)}} & = & \sum_{F}\sum_{F}\zeta^{\left(F\right)}\left(z\right)\zeta^{\left(F\right)}\left(y\right)k_{F}\left(z,y\right)\label{eq:pd81}\\
 & = & \left\Vert P_{F}\delta_{x_{1}}\right\Vert _{\mathscr{H}}^{2}.\nonumber 
\end{eqnarray}
By \lemref{proj}, the LHS of (\ref{eq:pd81}) is given by 
\begin{eqnarray*}
\left\Vert P_{F}\delta_{x_{1}}\right\Vert _{\mathscr{H}}^{2} & = & \left\langle P_{F}\delta_{x_{1}},\delta_{x_{1}}\right\rangle _{\mathscr{H}}\\
 & = & \sum_{y\in F}\left(K_{F}^{-1}\delta_{x_{1}}\right)\left(y\right)\left\langle k_{y},\delta_{x_{1}}\right\rangle _{\mathscr{H}}\\
 & = & \left(K_{F}^{-1}\delta_{x_{1}}\right)\left(x_{1}\right)=K_{F}^{-1}\left(x_{1},x_{1}\right).
\end{eqnarray*}
\end{proof}
\begin{cor}
\label{cor:proj1}If $\delta_{x_{1}}\in\mathscr{H}$ (see \thmref{del}),
then 
\begin{equation}
\sup_{F\in\mathscr{F}\left(V\right)}\left(K_{F}^{-1}\delta_{x_{1}}\right)\left(x_{1}\right)=\left\Vert \delta_{x_{1}}\right\Vert _{\mathscr{H}}^{2}.\label{eq:pd9}
\end{equation}
\end{cor}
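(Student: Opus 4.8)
The plan is to translate the claim into a statement about orthogonal projections and then exploit the fact that the subspaces $\mathscr{H}_{F}$ exhaust $\mathscr{H}$. By \lemref{proj2}, for every $F\in\mathscr{F}\left(V\right)$ with $x_{1}\in F$ we have $\left(K_{F}^{-1}\delta_{x_{1}}\right)\left(x_{1}\right)=\left\Vert P_{F}\delta_{x_{1}}\right\Vert _{\mathscr{H}}^{2}$; thus the assertion (\ref{eq:pd9}) is equivalent to
\[
\sup_{F\in\mathscr{F}\left(V\right)}\left\Vert P_{F}\delta_{x_{1}}\right\Vert _{\mathscr{H}}^{2}=\left\Vert \delta_{x_{1}}\right\Vert _{\mathscr{H}}^{2},
\]
where the hypothesis $\delta_{x_{1}}\in\mathscr{H}$ (equivalently, the boundedness condition of \thmref{del}) guarantees the right-hand side is finite. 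One may restrict attention to those $F$ containing $x_{1}$, since if $x_{1}\notin F$ then $\delta_{x_{1}}$ restricted to $F$ vanishes and the corresponding term is $0$, so such $F$ do not affect the supremum.

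First I would establish the inequality $\leq$ by monotonicity. The family $\left\{ \mathscr{H}_{F}\right\} _{F\in\mathscr{F}\left(V\right)}$ is directed by inclusion: if $F\subseteq F'$ then $\mathscr{H}_{F}\subseteq\mathscr{H}_{F'}$, so $P_{F}P_{F'}=P_{F}$ and hence $\left\Vert P_{F}\delta_{x_{1}}\right\Vert _{\mathscr{H}}\leq\left\Vert P_{F'}\delta_{x_{1}}\right\Vert _{\mathscr{H}}\leq\left\Vert \delta_{x_{1}}\right\Vert _{\mathscr{H}}$, the final step being the contractivity of an orthogonal projection. Consequently $F\mapsto\left\Vert P_{F}\delta_{x_{1}}\right\Vert _{\mathscr{H}}^{2}$ is a monotone net bounded above by $\left\Vert \delta_{x_{1}}\right\Vert _{\mathscr{H}}^{2}$, which already gives that the supremum is $\leq\left\Vert \delta_{x_{1}}\right\Vert _{\mathscr{H}}^{2}$.

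The reverse inequality is the substantive point, and it rests on density. By \defref{d1}, $\mathscr{H}$ is the Hilbert completion of $\mathrm{span}\left\{ k_{x}\mid x\in V\right\} $, so $\bigcup_{F\in\mathscr{F}\left(V\right)}\mathscr{H}_{F}$ is dense in $\mathscr{H}$, since every finite linear combination of the $k_{x}$ lies in some $\mathscr{H}_{F}$. Hence, given $\varepsilon>0$, I would choose a finite $F_{0}$ and $\eta\in\mathscr{H}_{F_{0}}$ with $\left\Vert \delta_{x_{1}}-\eta\right\Vert _{\mathscr{H}}<\varepsilon$; because $P_{F_{0}}\delta_{x_{1}}$ is the best approximation to $\delta_{x_{1}}$ from $\mathscr{H}_{F_{0}}$, this forces $\left\Vert \delta_{x_{1}}-P_{F_{0}}\delta_{x_{1}}\right\Vert _{\mathscr{H}}<\varepsilon$, and then by the Pythagorean identity $\left\Vert P_{F_{0}}\delta_{x_{1}}\right\Vert _{\mathscr{H}}^{2}>\left\Vert \delta_{x_{1}}\right\Vert _{\mathscr{H}}^{2}-\varepsilon^{2}$. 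Letting $\varepsilon\to0$ yields that the supremum is $\geq\left\Vert \delta_{x_{1}}\right\Vert _{\mathscr{H}}^{2}$, which together with the previous paragraph completes the argument. The only delicate ingredient is the density step, i.e., that $P_{F}\to I$ strongly along the net of finite subsets; everything else is a routine consequence of the monotonicity of projections onto nested closed subspaces. I note that the hypothesis $\delta_{x_{1}}\in\mathscr{H}$ is essential: without it the net $\left\Vert P_{F}\delta_{x_{1}}\right\Vert _{\mathscr{H}}^{2}$ still increases but is unbounded, and the supremum in (\ref{eq:pd9}) equals $+\infty$, consistent with \corref{proj1} being stated under that standing assumption.
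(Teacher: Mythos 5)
Your proposal is correct and follows exactly the route the paper intends: the corollary is stated without proof as an immediate consequence of Lemma \ref{lem:proj2} (identifying $\left(K_{F}^{-1}\delta_{x_{1}}\right)\left(x_{1}\right)$ with $\left\Vert P_{F}\delta_{x_{1}}\right\Vert _{\mathscr{H}}^{2}$) together with the strong convergence $P_{F}\rightarrow I$ along the net of finite subsets, which is precisely the monotonicity-plus-density argument you spell out. Your side remarks (restricting to $F\ni x_{1}$, and the supremum being $+\infty$ when $\delta_{x_{1}}\notin\mathscr{H}$) are accurate and consistent with Theorem \ref{thm:del}.
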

\begin{thm}
\label{thm:pm}\label{thm:del}Given $V$, $k:V\times V\rightarrow\mathbb{R}$
positive definite (p.d.). Let $\mathscr{H}=\mathscr{H}\left(k\right)$
be the corresponding RKHS. Assume $V$ is countably infinite. Then
the following three conditions (\ref{enu:d1})-(\ref{enu:d3}) are
equivalent; $x_{1}\in V$ is fixed:
\begin{enumerate}
\item \label{enu:d1}$\delta_{x_{1}}\in\mathscr{H}$; 
\item \label{enu:d2}$\exists C_{x_{1}}<\infty$ such that for all $F\in\mathscr{F}\left(V\right)$,
the following estimate holds:
\begin{equation}
\left|\xi\left(x_{1}\right)\right|^{2}\leq C_{x_{1}}\underset{\left(x,y\right)\in F\times F}{\sum\sum}\overline{\xi\left(x\right)}\xi\left(y\right)k\left(x,y\right)\label{eq:fm0}
\end{equation}

\item \label{enu:d3}For $F\in\mathscr{F}\left(V\right)$, set 
\begin{equation}
K_{F}=\left(k\left(x,y\right)\right)_{\left(x,y\right)\in F\times F}\label{eq:d2}
\end{equation}
as a $\#F\times\#F$ matrix. Then
\begin{equation}
\sup_{F\in\mathscr{F}\left(V\right)}\left(K_{F}^{-1}\delta_{x_{1}}\right)\left(x_{1}\right)<\infty.\label{eq:d3}
\end{equation}

\end{enumerate}
\end{thm}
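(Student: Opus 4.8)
The plan is to establish the two equivalences $(\ref{enu:d1})\Leftrightarrow(\ref{enu:d2})$ and $(\ref{enu:d1})\Leftrightarrow(\ref{enu:d3})$ separately, drawing on the two independent pieces of machinery assembled above: the abstract membership criterion \lemref{hf}, and the finite-rank projection identities \lemref{proj2} and \corref{proj1}. The equivalence $(\ref{enu:d1})\Leftrightarrow(\ref{enu:d2})$ is essentially immediate, whereas the substance lies in the implication $(\ref{enu:d3})\Rightarrow(\ref{enu:d1})$, where a uniform bound on the scalars $(K_F^{-1}\delta_{x_1})(x_1)$ must be upgraded to genuine membership $\delta_{x_1}\in\mathscr{H}$.

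For $(\ref{enu:d1})\Leftrightarrow(\ref{enu:d2})$ I would specialize \lemref{hf} to $f=\delta_{x_1}$. For any finitely supported $\xi$ one has $\sum_{x\in V}\overline{\xi(x)}\,\delta_{x_1}(x)=\overline{\xi(x_1)}$, so the left-hand side of the estimate in \lemref{hf} collapses to $|\xi(x_1)|^2$; moreover every finitely supported $\xi$ is supported in some $F\in\mathscr{F}(V)$, and for such $\xi$ the double sum over $V\times V$ reduces to the sum over $F\times F$ appearing in (\ref{eq:fm0}). Hence the criterion of \lemref{hf} for $f=\delta_{x_1}$ is verbatim condition (\ref{enu:d2}), and the two are equivalent, with the same optimal constant $C_{x_1}$.

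For $(\ref{enu:d1})\Leftrightarrow(\ref{enu:d3})$, the forward direction is furnished directly by \corref{proj1}: if $\delta_{x_1}\in\mathscr{H}$ then $\sup_{F}(K_F^{-1}\delta_{x_1})(x_1)=\|\delta_{x_1}\|_{\mathscr{H}}^2<\infty$. For the converse I would work with the net $h_F:=P_F\delta_{x_1}\in\mathscr{H}_F$, which by \lemref{proj} is a well-defined vector of $\mathscr{H}$ even before knowing that $\delta_{x_1}\in\mathscr{H}$. Two facts drive the argument. First, $h_F$ interpolates $\delta_{x_1}$ on $F$, i.e. $h_F(x)=\delta_{x_1}(x)$ for all $x\in F$, which follows from the reproducing property and $K_FK_F^{-1}=I$. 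Second, for $F\subseteq F'$ one has the consistency relation $h_F=P_Fh_{F'}$: the vector $P_Fh_{F'}\in\mathscr{H}_F$ agrees with $\delta_{x_1}$ at every point of $F$ (using the interpolation property of $h_{F'}$ on $F'\supseteq F$), and the interpolant in $\mathscr{H}_F$ is unique. Consequently $P_F$ and $P_{F'}-P_F$ project onto orthogonal subspaces, giving the Pythagorean identity $\|h_{F'}\|_{\mathscr{H}}^2=\|h_F\|_{\mathscr{H}}^2+\|h_{F'}-h_F\|_{\mathscr{H}}^2$. Combining this with \lemref{proj2}, which identifies $\|h_F\|_{\mathscr{H}}^2=(K_F^{-1}\delta_{x_1})(x_1)$, I see that under hypothesis (\ref{enu:d3}) the nonnegative net $\|h_F\|_{\mathscr{H}}^2$ is nondecreasing along inclusions and bounded above, hence convergent; feeding this into the Pythagorean identity (applied to $F$, $F'$ and their union $F\cup F'$) shows that $\{h_F\}$ is Cauchy in the complete space $\mathscr{H}$, so $h_F\to h$ for some $h\in\mathscr{H}$. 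Finally, for each fixed $z\in V$ and any $F$ containing $z$ and $x_1$, continuity of $\varphi\mapsto\langle k_z,\varphi\rangle_{\mathscr{H}}=\varphi(z)$ gives $h(z)=\lim_F h_F(z)=\delta_{x_1}(z)$; thus $h=\delta_{x_1}$ as a function on $V$ and $\delta_{x_1}\in\mathscr{H}$, completing $(\ref{enu:d3})\Rightarrow(\ref{enu:d1})$.

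I expect the only real obstacle to be this last implication: one must ensure that the formal objects $P_F\delta_{x_1}$ — legitimate elements of $\mathscr{H}$ by \lemref{proj}, but not a priori the projections of a pre-existing vector $\delta_{x_1}$ — nevertheless assemble, under the uniform bound (\ref{enu:d3}), into a bona fide Hilbert-space limit reproducing $\delta_{x_1}$ pointwise. The monotone-projection identity $h_F=P_Fh_{F'}$ is exactly what converts a mere supremum bound into norm-Cauchyness, and hence into convergence. A minor technical caveat, inherited from \lemref{proj} and \lemref{proj2}, is the standing use of $K_F^{-1}$: in the degenerate case one reads $K_F^{-1}$ as a Moore--Penrose inverse and discards redundant sample points (adjoining a point $y$ with $k_y\in\mathscr{H}_F$ leaves $\mathscr{H}_F$, $P_F$, and $\|P_F\delta_{x_1}\|_{\mathscr{H}}^2$ unchanged), so the net argument is unaffected. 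As a byproduct one also recovers $(\ref{enu:d2})\Leftrightarrow(\ref{enu:d3})$ directly: for fixed $F$, a Cauchy--Schwarz optimization over $\xi\in\mathbb{C}^{F}$ in the inner product induced by $K_F$ yields $\sup_{\xi\neq 0}|\xi(x_1)|^2/\langle\xi,K_F\xi\rangle=(K_F^{-1}\delta_{x_1})(x_1)$, so the optimal constant in (\ref{enu:d2}) is precisely the supremum in (\ref{eq:d3}).
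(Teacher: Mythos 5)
Your proof is correct, and it follows exactly the route the paper intends: the paper's own ``proof'' of this theorem is only a pointer to external references together with the remark ``see also Lemma \ref{lem:hf},'' and your argument fills in precisely those details using the paper's own scaffolding --- Lemma \ref{lem:hf} specialized to $f=\delta_{x_1}$ for $(1)\Leftrightarrow(2)$, and Lemmas \ref{lem:proj}, \ref{lem:proj2} and Corollary \ref{cor:proj1} for $(1)\Leftrightarrow(3)$. The one step requiring real care, upgrading the uniform bound in $(3)$ to membership via the consistency relation $P_F\delta_{x_1}=P_F(P_{F'}\delta_{x_1})$ and the resulting monotone, hence Cauchy, net, is handled correctly, as is the degenerate case of singular $K_F$.
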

\begin{proof}
For details, see \cite{2015arXiv150102310J,2015arXiv150104954J}.
See also \lemref{hf}.
\end{proof}
Following \cite{KZ96}, we say that $k$ is \emph{strictly positive}
iff  $\det K_{F}>0$ for all $F\in\mathscr{F}\left(V\right)$. 

\begin{figure}[H]
\begin{tabular}[t]{ccc}
\includegraphics[width=0.3\columnwidth]{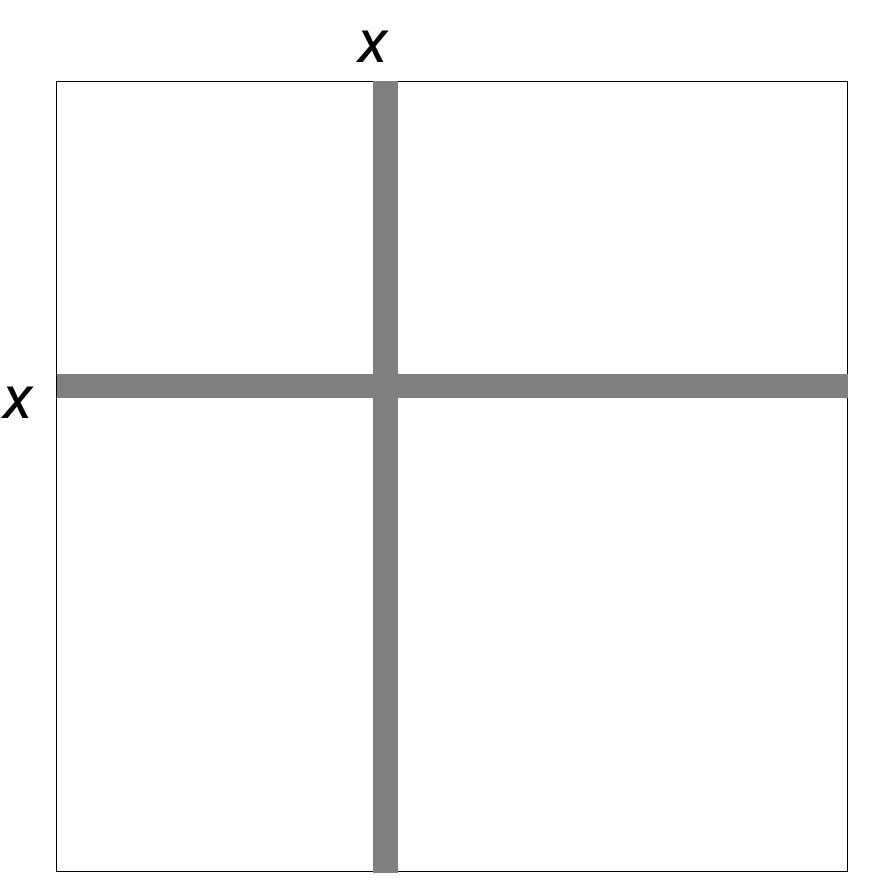} &  & \includegraphics[width=0.3\columnwidth]{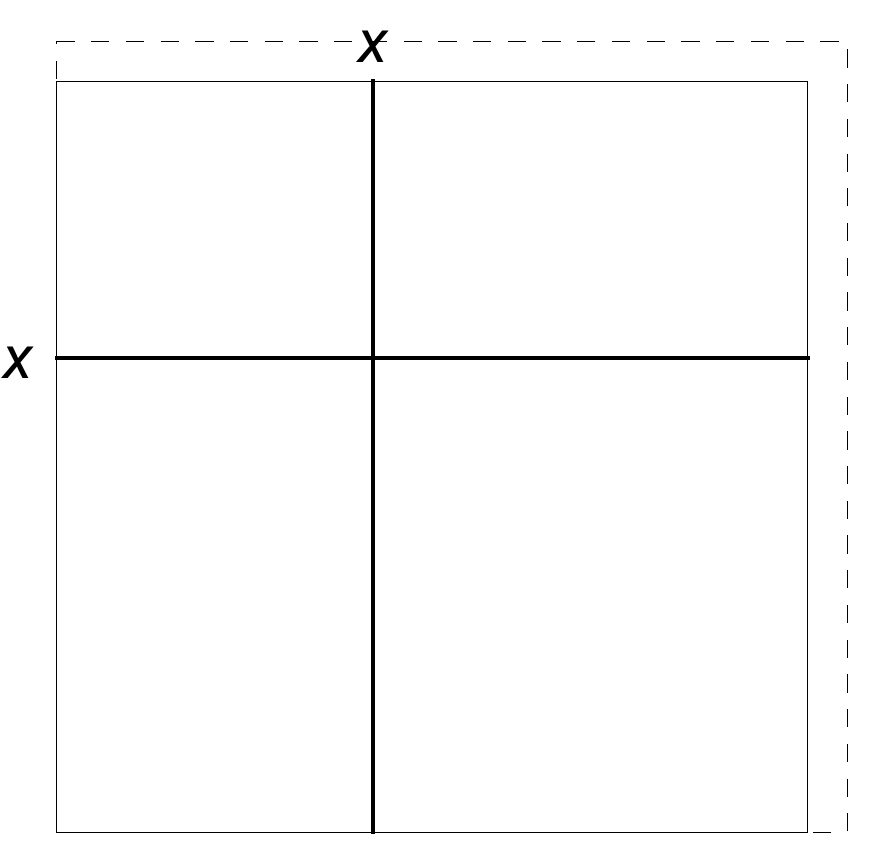}\tabularnewline
\end{tabular}

\caption{\label{fig:minor}The $\left(x,x\right)$ minors, $K_{F}\rightarrow K_{F}'$. }
\end{figure}

\begin{cor}
\label{cor:sp}Suppose $k:V\times V\rightarrow\mathbb{R}$ is strictly
positive. Set $D_{F}:=\det K_{F}$. If $x\in V$, and $F\in\mathscr{F}_{x}\left(V\right)$,
set $K'_{F}:=$ the minor in $K_{F}$ obtained by omitting row $x$
and column $x$, see \figref{minor}. Let $x\in V$. Then 
\begin{equation}
\delta_{x}\in\mathscr{H}\Longleftrightarrow\sup_{F\in\mathscr{F}_{x}\left(V\right)}\frac{D'_{F}}{D_{F}}<\infty.
\end{equation}

\end{cor}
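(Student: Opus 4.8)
The plan is to reduce the corollary to the equivalence $(\ref{enu:d1})\Longleftrightarrow(\ref{enu:d3})$ of \thmref{pm}, and then to evaluate the quantity $\left(K_{F}^{-1}\delta_{x}\right)(x)$ appearing in (\ref{eq:d3}) by means of the classical adjugate (Cramer) formula. Indeed, \thmref{pm} already asserts that $\delta_{x}\in\mathscr{H}$ holds if and only if $\sup_{F}\left(K_{F}^{-1}\delta_{x}\right)(x)<\infty$, so the entire content of the corollary is the single identity
\begin{equation}
\left(K_{F}^{-1}\delta_{x}\right)(x)=\frac{D'_{F}}{D_{F}},\qquad F\in\mathscr{F}_{x}\left(V\right).\label{eq:cr}
\end{equation}
Here the strict positivity hypothesis $D_{F}=\det K_{F}>0$ guarantees that $K_{F}$ is invertible for every finite $F$, so that both the left-hand side of (\ref{eq:cr}) and the ratio on the right are well defined.

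First I would observe that $\delta_{x}$, regarded as a vector indexed by $F$, is the standard basis vector supported at $x$; hence $K_{F}^{-1}\delta_{x}$ is precisely the $x$-th column of $K_{F}^{-1}$, and its value at $x$ is the diagonal entry $\left(K_{F}^{-1}\right)_{x,x}$. By the adjugate formula $K_{F}^{-1}=\left(\det K_{F}\right)^{-1}\operatorname{adj}\left(K_{F}\right)$, this diagonal entry equals the $(x,x)$ cofactor of $K_{F}$ divided by $D_{F}$. The $(x,x)$ cofactor carries the sign $(-1)^{x+x}=+1$ and is, by definition, the determinant of the matrix obtained from $K_{F}$ by deleting the row and the column indexed by $x$; that is, it is exactly $D'_{F}=\det K'_{F}$ in the notation of \figref{minor}. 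This establishes (\ref{eq:cr}).

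Taking the supremum over $F\in\mathscr{F}_{x}\left(V\right)$ then turns the finiteness criterion (\ref{eq:d3}) into $\sup_{F}D'_{F}/D_{F}<\infty$, which is the stated equivalence. I would also record that restricting the supremum to those $F$ that contain $x$ is harmless: if $x\notin F$ the corresponding term does not contribute, and moreover by \lemref{proj2} the quantity $\left(K_{F}^{-1}\delta_{x}\right)(x)=\left\Vert P_{F}\delta_{x}\right\Vert _{\mathscr{H}}^{2}$ is nondecreasing under enlargement of $F$ (since $\mathscr{H}_{F}\subseteq\mathscr{H}_{F'}$ forces $\left\Vert P_{F}\delta_{x}\right\Vert \leq\left\Vert P_{F'}\delta_{x}\right\Vert$), so the supremum is in fact a monotone limit and agrees with the one in \corref{proj1}.

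The only point requiring care, and it is a matter of bookkeeping rather than a genuine obstacle, is the sign of the cofactor together with the precise identification of the deleted-row/deleted-column minor with $D'_{F}$; once the diagonal cofactor is seen to equal $+D'_{F}$, the corollary follows at once from \thmref{pm}. The positivity of $D_{F}$ supplied by strict positivity is what keeps every inverse and every ratio legitimate throughout the argument.
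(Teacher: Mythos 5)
Your argument is correct and is precisely the derivation the paper intends but leaves unwritten: \corref{sp} is stated immediately after \thmref{pm} with no proof of its own, and the evident route is exactly yours, namely the identity $\left(K_{F}^{-1}\delta_{x}\right)\left(x\right)=\operatorname{adj}\left(K_{F}\right)_{xx}/\det K_{F}=D'_{F}/D_{F}$ from Cramer's rule combined with condition (\ref{enu:d3}) of \thmref{pm}. Your added remarks on the sign of the diagonal cofactor and on the monotonicity of $F\mapsto\left\Vert P_{F}\delta_{x}\right\Vert _{\mathscr{H}}^{2}$ (via \lemref{proj2} and \corref{proj1}) are accurate and only make explicit what the paper takes for granted.
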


\section{Sampling and point-masses of finite norm}

The results presented below hold both for the case of reproducing
kernel Hilbert spaces (RKHSs), and the parallel case of \emph{relative}
RKHSs. However, we shall state theorems only in the first case. The
reader will be able to formulate the results in the case of relative
RKHSs. The proofs in the relative case are the same but with slight
modifications \emph{mutatis mutandis}. 

An important special case of relative RKHSs is that of infinite networks
(or graphs) treated in the earlier literature.

\emph{Infinite vs finite graphs}. We study \textquotedblleft large
weighted graphs\textquotedblright{} (vertices $V$, edges $E$, and
weights as functions assigned on the edges $E$), and our motivation
derives from learning where \textquotedblleft learning\textquotedblright{}
is understood broadly to include (machine) learning of suitable probability
distribution, i.e., meaning learning from samples of training data.
Other applications of an analysis of weighted graphs include statistical
mechanics, such as infinite spin models, and large digital networks.
It is natural to ask then how one best approaches analysis on \textquotedblleft large\textquotedblright{}
systems. We propose an analysis via infinite weighted graphs. This
is so even if some of the questions in learning theory may in fact
refer to only \textquotedblleft large\textquotedblright{} finite graphs.

One reason for this (among others) is that \emph{statistical features}
in such an analysis are best predicted by consideration of probability
spaces corresponding to measures on infinite sample spaces. Moreover
the latter are best designed from consideration of infinite weighted
graphs, as opposed to their finite counterparts. Examples of statistical
features which are relevant even for finite samples is long-range
order; i.e., the study of correlations between distant sites (vertices),
and related phase-transitions, e.g., sign-flips at distant sites.
In designing efficient learning models, it is important to understand
the possible occurrence of unexpected long-range correlations; e.g.,
correlations between distant sites in a finite sample.

A second reason for the use of infinite sample-spaces is their use
in designing efficient sampling procedures. The interesting solutions
will often occur first as vectors in an infinite-dimensional reproducing-kernel
Hilbert space RKHS. Indeed, such RKHSs serve as powerful tools in
the solution of a kernel-optimization problems with penalty terms.
Once an optimal solution is obtained in infinite dimensions, one may
then proceed to study its restrictions to suitably chosen finite subgraphs.
See \cite{MR2573134,MR3074509,MR3285408,MR3167762,MR3390972}.
\begin{defn}
\label{def:nt}An infinite network consists of the following:
\begin{itemize}
\item $V$ a set of vertices, $\#V=\aleph_{0}$;
\item $E\subset V\times V\backslash\left\{ \mbox{diagonal}\right\} $, edges;
\item $c:E\longrightarrow\mathbb{R}_{+}$ a fixed symmetric function representing
conductance. 
\end{itemize}

We assume $\left(V,E,c\right)$ is connected, i.e., for $\forall x,y\in V$,
$\exists\left(x_{i}x_{i+1}\right)_{i=0}^{n-1}\in E$ s.t. $x_{0}=x$,
$x_{n}=y$. 

\end{defn}

\begin{defn}
\label{def:eh}Let $\left(V,E,c\right)$ be an infinite network. We
denote by $\mathscr{H}_{E}$ the \emph{energy Hilbert space}, where
\[
\mathscr{H}_{E}=\Big\{ f:V\longrightarrow\mathbb{C}\mid\left\Vert f\right\Vert _{\mathscr{H}_{E}}^{2}:=\frac{1}{2}\underset{\left(xy\right)\in E}{\sum\sum}c_{xy}\left|f\left(x\right)-f\left(y\right)\right|^{2}<\infty\Big\}.
\]
Given $f,g\in\mathscr{H}_{E}$, the inner product is 
\[
\left\langle f,g\right\rangle _{\mathscr{H}_{E}}=\frac{1}{2}\underset{\left(xy\right)\in E}{\sum\sum}c_{xy}(\overline{f\left(x\right)}-\overline{f\left(y\right)})(f\left(x\right)-f\left(y\right)).
\]

\end{defn}
The following two facts are well-known:
\begin{lem}
\label{lem:dipp}Let $V,E,\mathscr{H}_{E}$ be as in Definitions \ref{def:nt}
and \ref{def:eh}. Then
\begin{enumerate}
\item \label{enu:f1}$\mathscr{H}_{E}$ is a Hilbert space; and
\item \label{enu:f2}For $\forall x,y\in V$, $\exists!v_{xy}\in\mathscr{H}_{E}$
(called dipole) s.t. 
\begin{equation}
f\left(x\right)-f\left(y\right)=\left\langle v_{xy},f\right\rangle _{\mathscr{H}_{E}},\quad\forall f\in\mathscr{H}_{E}.\label{eq:di1}
\end{equation}

\end{enumerate}
\end{lem}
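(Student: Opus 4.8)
The plan is to handle both parts through a single estimate that controls point-differences by the energy norm, using the connectedness hypothesis from Definition~\ref{def:nt}. A preliminary observation must be recorded first: the energy form in Definition~\ref{def:eh} is really a seminorm, since every constant function has zero energy. Accordingly $\mathscr{H}_E$ is to be understood as the space of finite-energy functions \emph{modulo constants} (equivalently, one fixes a base point $o\in V$ and restricts attention to representatives with $f(o)=0$). On this quotient the form of Definition~\ref{def:eh} is positive definite, and it is this genuine inner product norm with respect to which completeness must be established.

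The key estimate is the following. Fix $x,y\in V$, and use connectedness to choose a path $x=x_0,x_1,\dots,x_n=y$ with $\left(x_i x_{i+1}\right)\in E$ for each $i$. Writing $f(x)-f(y)=\sum_{i=0}^{n-1}\left(f(x_i)-f(x_{i+1})\right)$ and applying Cauchy--Schwarz gives
\[
\left|f(x)-f(y)\right|^2\le\Big(\sum_{i=0}^{n-1}\frac{1}{c_{x_i x_{i+1}}}\Big)\Big(\sum_{i=0}^{n-1}c_{x_i x_{i+1}}\left|f(x_i)-f(x_{i+1})\right|^2\Big)\le C_{xy}\,\left\Vert f\right\Vert_{\mathscr{H}_E}^2,
\]
where $C_{xy}:=2\sum_{i=0}^{n-1}1/c_{x_i x_{i+1}}<\infty$ is independent of $f$. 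Thus each difference functional $f\mapsto f(x)-f(y)$ is bounded on $\mathscr{H}_E$, and the bound depends only on the chosen path.

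Part (\ref{enu:f1}) then follows. If $\left(f_n\right)$ is Cauchy in energy norm, the estimate above shows that $\left(f_n(x)-f_n(y)\right)$ is Cauchy in $\mathbb{C}$ for every pair $x,y$; normalizing so that $f_n(o)=0$, this yields a pointwise limit $f$. Fatou's lemma applied to the defining sum gives $\left\Vert f\right\Vert_{\mathscr{H}_E}\le\liminf_n\left\Vert f_n\right\Vert_{\mathscr{H}_E}<\infty$, so $f\in\mathscr{H}_E$, and a second application of Fatou to the increments of $f-f_n$ shows $\left\Vert f-f_n\right\Vert_{\mathscr{H}_E}\to 0$. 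This proves completeness, hence $\mathscr{H}_E$ is a Hilbert space.

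For part (\ref{enu:f2}), fix $x,y$ and consider the linear map $L_{xy}\colon f\mapsto f(x)-f(y)$. It annihilates constants, so it descends to the quotient, and it is bounded by the key estimate. Since $\mathscr{H}_E$ is a Hilbert space by part (\ref{enu:f1}), the Riesz representation theorem supplies a unique $v_{xy}\in\mathscr{H}_E$ with $L_{xy}(f)=\left\langle v_{xy},f\right\rangle_{\mathscr{H}_E}$ for all $f$, which is precisely \eqref{eq:di1}; uniqueness is exactly the uniqueness clause in Riesz. The main obstacle is bookkeeping around the seminorm: one must be careful that ``$\mathscr{H}_E$ is a Hilbert space'' is interpreted modulo constants and that the difference functionals descend to that quotient. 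Once connectedness is converted into the path estimate, both completeness and the dipole representation are routine consequences.
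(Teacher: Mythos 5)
Your proof is correct and follows essentially the same route as the paper's: connectedness gives a path, Cauchy--Schwarz along the path bounds the difference functional $f\mapsto f(x)-f(y)$ by the energy norm, and the Riesz representation theorem then produces the unique dipole $v_{xy}$. You additionally write out the completeness argument (which the paper dismisses as ``clear'') and flag that the energy form is only a seminorm until one passes to the quotient by constants (or fixes $f(o)=0$) --- worthwhile bookkeeping, but not a different method.
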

\begin{proof}
While this is in the literature, we will include a brief sketch. Part
(\ref{enu:f1}) is clear. To prove (\ref{enu:f2}), recall that it
is assumed that $\left(V,E\right)$ is connected; so given any pair
$x,y\in V$, $\exists n\in\mathbb{N}$, and $\left(x_{i}x_{i+1}\right)_{i=0}^{n-1}\in E$
s.t. $x_{0}=x$ and $x_{n}=y$. Then, for $\forall f\in\mathscr{H}_{E}$,
we have 
\begin{eqnarray*}
\left|f\left(y\right)-f\left(x\right)\right| & = & \left|\sum_{i=0}^{n-1}f\left(x_{i+1}\right)-f\left(x_{i}\right)\right|\\
 & = & \left|\sum_{i=0}^{n-1}\frac{1}{\sqrt{c_{x_{i}x_{i+1}}}}\sqrt{c_{x_{i}x_{i+1}}}\left(f\left(x_{i+1}\right)-f\left(x_{i}\right)\right)\right|\\
\left(\text{Schwarz}\right) & \leq & \left(\sum_{i=0}^{n-1}\frac{1}{c_{x_{i}x_{i+1}}}\right)^{\frac{1}{2}}\left(\sum_{i=0}^{n-1}c_{x_{i}x_{i+1}}\left|f\left(x_{i+1}\right)-f\left(x_{i}\right)\right|^{2}\right)^{\frac{1}{2}}\\
 & \leq & \mbox{Const}\cdot\left\Vert f\right\Vert _{\mathscr{H}_{E}}.
\end{eqnarray*}
The existence of $v_{xy}\in\mathscr{H}_{E}$ as asserted in (\ref{eq:di1})
now follows from an application of Riesz' theorem to the Hilbert space
$\mathscr{H}_{E}$. Also see \cite{MR3390972,JP10,2015arXiv150102310J}.\end{proof}
\begin{defn}
\label{def:fm}It will be convenient to choose a fixed base-point,
say $o\in V$, and set $v_{x}:=v_{xo}$. In this case, (\ref{eq:di1})
takes the form
\begin{equation}
f\left(x\right)-f\left(o\right)=\left\langle v_{xo},f\right\rangle _{\mathscr{H}_{E}},\quad\forall f\in\mathscr{H}_{E}.
\end{equation}
We say that $\mathscr{H}_{E}$ is a relative RKHS. The corresponding
positive definite kernel is as follows:
\[
k\left(x,y\right)=\left\langle v_{x},v_{y}\right\rangle _{\mathscr{H}_{E}},\quad\left(x,y\right)\in V\times V.
\]

We say that a given infinite network $\left(V,E,c,\mathscr{H}_{E}\right)$
as above has finite point-masses iff 
\begin{equation}
\delta_{x}\in\mathscr{H}_{E},\quad\forall x\in V.\label{eq:fm}
\end{equation}
\end{defn}
\begin{rem}
The condition (\ref{eq:fm}) will be automatic if for all $x\in V$,
\begin{equation}
\#\left\{ y\in V\mid\left(xy\right)\in E\right\} <\infty,\label{eq:fn}
\end{equation}
but the finite-point mass case holds in many examples where (\ref{eq:fn})
is not assumed, see \secref{SPA} below. \end{rem}
\begin{prop}
\label{prop:glap}Let $\left(V,E,c\right)$ and $\mathscr{H}_{E}$
be as in \defref{eh}. Assume condition (\ref{eq:fn}) is satisfied.
For functions $f$ on $V$, set 
\begin{equation}
\left(\Delta_{c}f\right)\left(x\right)=\sum_{y\sim x}c_{xy}\left(f\left(x\right)-f\left(y\right)\right).\label{eq:L1}
\end{equation}
Finally, let $\left\{ v_{x}\right\} _{x\in V\backslash\left\{ o\right\} }$
be a system of dipoles; see \defref{fm}. Set 
\begin{equation}
k_{x}\left(y\right):=\left\langle v_{y},v_{x}\right\rangle _{\mathscr{H}},\quad y\in V;\label{eq:L2}
\end{equation}
then 
\begin{equation}
\left(\Delta_{c}k_{x}\right)\left(y\right)=\delta_{x,y},\quad\forall x,y\in V\backslash\left\{ o\right\} .\label{eq:L3}
\end{equation}
\end{prop}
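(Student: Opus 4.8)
The plan is to identify the function $k_{x}$ with the dipole $v_{x}$ up to an additive constant, and then to exploit a Riesz-type identity that expresses the graph Laplacian $\Delta_{c}$ as the action of the Dirac mass $\delta_{y}$ under the energy inner product. First I would apply the dipole reproducing property (\ref{eq:di1}), in the base-point form of \defref{fm}, to the element $f=v_{x}\in\mathscr{H}_{E}$. This gives
\[
k_{x}\left(y\right)=\left\langle v_{y},v_{x}\right\rangle _{\mathscr{H}_{E}}=v_{x}\left(y\right)-v_{x}\left(o\right),
\]
so that $k_{x}$ and $v_{x}$ differ only by the constant $v_{x}\left(o\right)$, which is independent of $y$. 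Since $\Delta_{c}$ is built entirely from differences $f\left(x\right)-f\left(y\right)$, it annihilates constants, and therefore $\Delta_{c}k_{x}=\Delta_{c}v_{x}$. It thus suffices to compute $\left(\Delta_{c}v_{x}\right)\left(y\right)$.

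The crucial step is the identity $\left\langle \delta_{w},f\right\rangle _{\mathscr{H}_{E}}=\left(\Delta_{c}f\right)\left(w\right)$, valid for every $f\in\mathscr{H}_{E}$ and every $w\in V$. Under the local-finiteness hypothesis (\ref{eq:fn}), each $\delta_{w}$ lies in $\mathscr{H}_{E}$ (see the Remark following \defref{fm}), so the left-hand side is well defined. To verify the identity I would unfold the energy inner product from \defref{eh}: in the double sum only edges incident to $w$ survive, because $\delta_{w}\left(x\right)-\delta_{w}\left(y\right)\neq0$ forces exactly one of $x,y$ to equal $w$. Splitting the sum into the two cases $x=w$ and $y=w$, each contributes $\sum_{y\sim w}c_{wy}\left(f\left(w\right)-f\left(y\right)\right)$, and the prefactor $\tfrac{1}{2}$ merges the two contributions into exactly $\left(\Delta_{c}f\right)\left(w\right)$ as in (\ref{eq:L1}).

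With this identity in hand, I would finish the computation for $x,y\in V\backslash\left\{ o\right\}$. Using conjugate symmetry of the inner product and then the dipole property (\ref{eq:di1}) applied to $f=\delta_{y}$, we obtain
\[
\left(\Delta_{c}k_{x}\right)\left(y\right)=\left(\Delta_{c}v_{x}\right)\left(y\right)=\left\langle \delta_{y},v_{x}\right\rangle _{\mathscr{H}_{E}}=\overline{\left\langle v_{x},\delta_{y}\right\rangle _{\mathscr{H}_{E}}}=\overline{\delta_{y}\left(x\right)-\delta_{y}\left(o\right)}=\delta_{x,y},
\]
where $\delta_{y}\left(o\right)=0$ because $y\neq o$. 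This is precisely (\ref{eq:L3}).

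The main obstacle is the identity established in the second paragraph. One must handle the ordered-pair (double-sum) convention that carries the factor $\tfrac{1}{2}$ in \defref{eh}, and one must invoke the local-finiteness assumption (\ref{eq:fn}) both to guarantee $\delta_{w}\in\mathscr{H}_{E}$ and to ensure the rearrangement of the now-finite sum over the edges at $w$ is legitimate. Everything else follows directly from the reproducing property of the dipoles, so once this Laplacian-as-Dirac-representative identity is secured, the statement is immediate.
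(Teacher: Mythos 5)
Your proof is correct, and it supplies exactly the ``direct computation'' that the paper omits (the paper's proof of this proposition is left to the reader). The two key points you isolate --- that $k_{x}=v_{x}-v_{x}\left(o\right)$ differs from the dipole by a constant killed by $\Delta_{c}$, and that $\left\langle \delta_{w},f\right\rangle _{\mathscr{H}_{E}}=\left(\Delta_{c}f\right)\left(w\right)$ once local finiteness (\ref{eq:fn}) puts $\delta_{w}$ in $\mathscr{H}_{E}$ --- are precisely the intended route, and your handling of the factor $\tfrac{1}{2}$ and of $\delta_{y}\left(o\right)=0$ for $y\neq o$ is accurate.
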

\begin{proof}
The verification of (\ref{eq:L3}) is a direct computation which we
leave to the reader. 

Because of (\ref{eq:L3}), one often says that $k$ (in (\ref{eq:L2}))
is a Green's function for the graph Laplacian $\Delta_{c}$ in (\ref{eq:L1}). 

For other applications of related semibounded selfadjoint operators,
see e.g., \cite{MR3167762}.
\end{proof}

\section{\label{sec:SPA}A symmetric pair of operators associated with a RKHS
having its point-masses of finite norm}

We now turn to the general case of positive definite kernels and the
case of RKHSs, and \emph{relative} RKHSs, such that the point-mass
condition (\ref{eq:fm}) is satisfied. We show that there is then
an associated and canonical symmetric pair of operators $\left(A,B\right)$: 
\begin{defn}
\label{def:dp}Let $k:V\times V\longrightarrow\mathbb{C}\left(\mbox{or }\mathbb{R}\right)$
be a positive definite kernel, and $\mathscr{H}$ be the corresponding
RKHS as above. If (\ref{eq:fm0}) holds, i.e., $\mathscr{H}$ has
the finite-mass property (Def. \ref{def:dmp}), then we get a dual
pair of operators as follows (see Fig \ref{fig:dp}):

$A:l^{2}\left(V\right)\longrightarrow\mathscr{H}\left(=\mathscr{H}\left(k\right)\right)$,
$\mathscr{D}\left(A\right)=span\left\{ \delta_{x}\right\} $ dense
in $l^{2}\left(V\right)$, with 
\[
A\delta_{x}=\delta_{x}\in\mathscr{H};
\]

$B:\mathscr{H}\longrightarrow l^{2}\left(V\right)$, $\mathscr{D}\left(B\right)=span\left\{ k_{x}\right\} $
dense in $\mathscr{H}$, where
\begin{itemize}
\item[] Case 1. RKHS:
\begin{equation}
Bk_{x}=\delta_{x}\label{eq:B1}
\end{equation}

\item[] Case 2. Relative RKHS (\defref{fm}): 
\begin{equation}
Bk_{x}=\delta_{x}-\delta_{o}.\label{eq:B2}
\end{equation}

\end{itemize}
\end{defn}
\begin{figure}[H]
\[
\xymatrix{l^{2}\left(V\right)\ar@/^{1.5pc}/[rr]^{A} &  & \mathscr{H}\left(=\mathscr{H}\left(k\right)\right)\ar@/^{1.5pc}/[ll]^{B}}
\]

\caption{\label{fig:dp}The pair of operators $\left(A,B\right)$.}
\end{figure}

\begin{prop}
\label{prop:AB}The system $\left(A,B\right)$ from \defref{dp} is
a symmetric pair, i.e., 
\begin{equation}
\left\langle Au,v\right\rangle _{\mathscr{H}}=\left\langle u,Bv\right\rangle _{l^{2}},\quad\forall u\in\mathscr{D}\left(A\right),\:\forall v\in\mathscr{D}\left(B\right).\label{eq:spa}
\end{equation}
\end{prop}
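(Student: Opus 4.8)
The plan is to verify the identity (\ref{eq:spa}) on the spanning sets that define the two domains and then extend it to all of $\mathscr{D}\left(A\right)\times\mathscr{D}\left(B\right)$ by sesquilinearity. Since $A$ and $B$ are linear, and since both $\left\langle \cdot,\cdot\right\rangle _{\mathscr{H}}$ and $\left\langle \cdot,\cdot\right\rangle _{l^{2}}$ are conjugate-linear in the first slot and linear in the second, it is enough to check (\ref{eq:spa}) for the generating vectors $u=\delta_{x}$ and $v=k_{y}$, with $x,y\in V$ arbitrary. Here I use throughout the standing finite-mass hypothesis (\defref{dmp}) under which the pair $\left(A,B\right)$ is even defined, namely that $\delta_{x}\in\mathscr{H}$ for every $x$; the operator $A$ is precisely the map carrying the $l^{2}$-Dirac vector to its namesake in $\mathscr{H}$.

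First I would compute the left-hand side. By definition $A\delta_{x}=\delta_{x}\in\mathscr{H}$, so the quantity to evaluate is $\left\langle \delta_{x},k_{y}\right\rangle _{\mathscr{H}}$. Passing to the complex conjugate (to respect the physicists' convention) and applying the reproducing property (\ref{eq:pd31}), which reads $\left\langle k_{y},\varphi\right\rangle _{\mathscr{H}}=\varphi\left(y\right)$, with $\varphi=\delta_{x}$ regarded as an element of $\mathscr{H}$, I get $\left\langle \delta_{x},k_{y}\right\rangle _{\mathscr{H}}=\overline{\left\langle k_{y},\delta_{x}\right\rangle _{\mathscr{H}}}=\overline{\delta_{x}\left(y\right)}=\delta_{x,y}$. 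Hence the left-hand side equals $\delta_{x,y}$.

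Next I would match this against the right-hand side in each of the two cases. In Case 1 (the RKHS, (\ref{eq:B1})) one has $Bk_{y}=\delta_{y}$, so $\left\langle \delta_{x},Bk_{y}\right\rangle _{l^{2}}=\left\langle \delta_{x},\delta_{y}\right\rangle _{l^{2}}=\delta_{x,y}$, agreeing with the left-hand side. In Case 2 (the relative RKHS, (\ref{eq:B2})) the reproducing identity is the dipole relation $\left\langle v_{y},f\right\rangle _{\mathscr{H}_{E}}=f\left(y\right)-f\left(o\right)$ of \lemref{dipp}, so the same conjugation step gives $\left\langle \delta_{x},v_{y}\right\rangle _{\mathscr{H}_{E}}=\delta_{x,y}-\delta_{x,o}$ on the left, while $Bk_{y}=\delta_{y}-\delta_{o}$ produces $\left\langle \delta_{x},\delta_{y}-\delta_{o}\right\rangle _{l^{2}}=\delta_{x,y}-\delta_{x,o}$ on the right; the two sides again coincide. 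Extending from these generator identities to general $u=\sum c_{x}\delta_{x}$ and $v=\sum d_{y}k_{y}$ by sesquilinearity then yields (\ref{eq:spa}) in full.

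There is no substantive obstacle here: the entire content lies in aligning the relevant reproducing/dipole convention with the matching definition of $B$, which is exactly why $B$ was set up in the two parallel forms (\ref{eq:B1}) and (\ref{eq:B2}). The only point demanding care is the conjugation bookkeeping — because the inner products are conjugate-linear in the first variable, one must invoke the conjugate of the reproducing property rather than apply it directly — but since the kernel values and the Dirac evaluations are real, this conjugation is ultimately invisible and the computation closes cleanly.
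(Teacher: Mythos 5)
Your proof is correct and follows essentially the same route as the paper's: reduce (\ref{eq:spa}) to the generator identity $\left\langle A\delta_{x},k_{y}\right\rangle _{\mathscr{H}}=\left\langle \delta_{x},Bk_{y}\right\rangle _{l^{2}}$ and verify it separately in the RKHS and relative-RKHS cases via the reproducing (resp. dipole) property. The only difference is cosmetic: you track the conjugation explicitly, whereas the paper simply restricts to real Hilbert spaces and notes the complex modifications are straightforward.
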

\begin{proof}
It suffices to consider real Hilbert spaces (the modifications needed
for the complex case are straightforward), in which case we have: 
\begin{enumerate}[label=]
\item $\left\langle u,v\right\rangle _{\mathscr{H}}$ the inner product
in $\mathscr{H}\left(=\mathscr{H}\left(k\right)\right)$;
\item $\left\langle \xi,\eta\right\rangle _{l^{2}}=\sum_{x\in V}\overline{\xi\left(x\right)}\eta\left(x\right)$; 
\item $\left\Vert \xi\right\Vert _{l^{2}}^{2}=\sum_{x\in V}\left|\xi\left(x\right)\right|^{2}<\infty$; 
\item $\left\Vert k_{x}\right\Vert _{\mathscr{H}}^{2}=k\left(x,x\right)$,
$\forall x\in V$. 
\end{enumerate}
To check (\ref{eq:spa}), it is enough to prove that 
\begin{equation}
\left\langle A\delta_{x},k_{y}\right\rangle _{\mathscr{H}}=\left\langle \delta_{x},Bk_{y}\right\rangle _{l^{2}},\quad\forall x,y\in V.\label{eq:spa1}
\end{equation}
Case 1. 
\[
\mbox{LHS}_{\left(\ref{eq:spa1}\right)}=\left\langle \delta_{x},k_{y}\right\rangle _{\mathscr{H}}=\delta_{xy}=\left\langle \delta_{x},\delta_{y}\right\rangle _{l^{2}}=\mbox{RHS}_{\left(\ref{eq:spa1}\right)}.
\]
Case 2. 
\[
\mbox{LHS}_{\left(\ref{eq:spa1}\right)}=\left\langle \delta_{x},k_{y}\right\rangle _{\mathscr{H}}=\delta_{x}\left(y\right)-\delta_{x}\left(o\right)=\mbox{RHS}_{\left(\ref{eq:spa1}\right)}.
\]
See (\ref{eq:B1})-(\ref{eq:B2}) in \defref{dp}.\end{proof}
\begin{notation*}[closure]
 Below we shall use the following terminology for the closure of
linear operators $\mathscr{H}_{1}\xrightarrow{\;T\;}\mathscr{H}_{2}$
where $T$ has dense domain in $\mathscr{H}_{1}$; and $dom\left(T^{*}\right)$
is assumed dense in $\mathscr{H}_{2}$. The graph of $T$ is 
\[
G\left(T\right)=\left\{ \begin{pmatrix}h\\
Th
\end{pmatrix}\mid h\in dom\left(T\right)\right\} \subseteq\begin{pmatrix}\underset{\oplus}{\mathscr{H}_{1}}\\
\mathscr{H}_{2}
\end{pmatrix}.
\]
The $\begin{pmatrix}\underset{\oplus}{\mathscr{H}_{1}}\\
\mathscr{H}_{2}
\end{pmatrix}$-closure of $G\left(T\right)$ is then the graph of the ``closure''
of $T$, written $\overline{T}$; in short, $\overline{G\left(T\right)}=G\left(\overline{T}\right)$.
We also have $T^{**}=\overline{T}$. \end{notation*}
\begin{cor}
\label{cor:ab}Let the operators $\left(A,B\right)$ be as above. 
\begin{enumerate}
\item We have $A\subset B^{*}$, and $B\subset A^{*}$; see Fig \ref{fig:adj}. 
\item Moreover, both operators below have dense domains, and are selfadjoint:

\begin{itemize}
\item[] $dom\left(A^{*}\overline{A}\right)$ is dense in $l^{2}$, and $A^{*}\overline{A}$
is s.a. in $l^{2}$; 
\item[] $dom\left(B^{*}\overline{B}\right)$ is dense in $\mathscr{H}$,
and $B^{*}\overline{B}$ is s.a. in $\mathscr{H}$. 
\end{itemize}
\item Using polar decomposition, we then get:
\begin{align}
\overline{A} & =V\left(A^{*}\overline{A}\right)^{\frac{1}{2}}=\left(\overline{A}A^{*}\right)^{\frac{1}{2}}V\label{eq:p1}\\
\overline{B} & =W\left(B^{*}\overline{B}\right)^{\frac{1}{2}}=\left(\overline{B}B^{*}\right)^{\frac{1}{2}}W\label{eq:p2}
\end{align}
with partial isometries $V:l^{2}\longrightarrow\mathscr{H}$, $W:\mathscr{H}\longrightarrow l^{2}$,
and 
\begin{align}
V^{*}V & =I_{l^{2}}-\mbox{Proj}\ker\left(\overline{A}\right),\;\mbox{and}\\
W^{*}W & =I_{\mathscr{H}}-\mbox{Proj}\ker\left(\overline{B}\right).
\end{align}

\end{enumerate}
\end{cor}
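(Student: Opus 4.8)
The plan is to obtain the three assertions as successive applications of standard unbounded-operator theory, with the symmetric pair identity (\ref{eq:spa}) supplying the only input specific to the present setting. For part (1) I would read off the adjoint inclusions directly from (\ref{eq:spa}). Fixing $u\in\mathscr{D}\left(A\right)$ and conjugating (\ref{eq:spa}) gives $\left\langle Bv,u\right\rangle _{l^{2}}=\left\langle v,Au\right\rangle _{\mathscr{H}}$ for every $v\in\mathscr{D}\left(B\right)$, so the functional $v\mapsto\left\langle Bv,u\right\rangle _{l^{2}}$ is the restriction to $\mathscr{D}\left(B\right)$ of the bounded functional $\left\langle \,\cdot\,,Au\right\rangle _{\mathscr{H}}$; by definition of the adjoint this places $u\in dom\left(B^{*}\right)$ with $B^{*}u=Au$, i.e. $A\subseteq B^{*}$. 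The symmetric computation, holding $v\in\mathscr{D}\left(B\right)$ fixed and varying $u$, gives $B\subseteq A^{*}$. Since adjoints are always closed, $B^{*}$ is closed, so $A$ has a closed extension and is therefore closable; likewise $B$ is closable. This legitimizes $\overline{A}=A^{**}$ and $\overline{B}=B^{**}$ as closed, densely defined operators, and I would record the standard fact $\left(\overline{A}\right)^{*}=A^{*}$, $\left(\overline{B}\right)^{*}=B^{*}$, which is what lets the closures interface with the adjoints appearing in the statement.

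For part (2) I would invoke von Neumann's theorem: for any closed, densely defined operator $T$ between Hilbert spaces, $T^{*}T$ is selfadjoint and nonnegative, and its domain is a core for $T$ (in particular dense). Applying this with $T=\overline{A}$ and using $\left(\overline{A}\right)^{*}=A^{*}$ yields that $A^{*}\overline{A}=\left(\overline{A}\right)^{*}\overline{A}$ is selfadjoint with dense domain in $l^{2}$; applying it with $T=\overline{B}$ yields the corresponding statement for $B^{*}\overline{B}$ in $\mathscr{H}$.

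For part (3) I would feed the closed densely defined operators $\overline{A}$ and $\overline{B}$ into the polar decomposition. Writing $T=\overline{A}$, one has $T=V\left|T\right|$ with $\left|T\right|=\left(T^{*}T\right)^{1/2}=\left(A^{*}\overline{A}\right)^{1/2}$ and $V:l^{2}\longrightarrow\mathscr{H}$ a partial isometry whose initial space is $\ker\left(\overline{A}\right)^{\perp}$; hence $V^{*}V$ is the projection onto $\ker\left(\overline{A}\right)^{\perp}$, which is exactly $I_{l^{2}}-\mbox{Proj}\ker\left(\overline{A}\right)$. The second representation in (\ref{eq:p1}) comes from the intertwining identity $V\left|T\right|=\left|T^{*}\right|V$ together with $TT^{*}=\overline{A}\left(\overline{A}\right)^{*}=\overline{A}A^{*}$, so that $\overline{A}=\left(\overline{A}A^{*}\right)^{1/2}V$. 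The same argument with $T=\overline{B}$ and $W:\mathscr{H}\longrightarrow l^{2}$ produces (\ref{eq:p2}) and the identity for $W^{*}W$.

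The proof is an orchestration of standard theorems rather than a single hard step, and the only place demanding care is the bookkeeping in part (1): one must check closability (immediate from $A\subseteq B^{*}$) and the identity $\left(\overline{A}\right)^{*}=A^{*}$, since it is precisely these facts that make the operator $A^{*}\overline{A}$ named in the statement literally coincide with $T^{*}T$ for the closed operator $T=\overline{A}$, so that von Neumann's theorem and the polar decomposition apply verbatim.
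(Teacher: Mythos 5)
Your proposal is correct and follows essentially the same route as the paper, whose proof of this corollary consists only of a citation to the standard theory of polar decomposition for unbounded closable operators; your write-up is precisely the argument those references contain, with the one input specific to this setting (the symmetric-pair identity giving $A\subseteq B^{*}$, $B\subseteq A^{*}$, hence closability and $(\overline{A})^{*}=A^{*}$) correctly identified and carried through von Neumann's theorem and the polar factorization.
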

\begin{proof}
The conclusions here follow from the fundamentals regarding the polar
decomposition (factorization), in the setting of general unbounded
closable operators; see e.g., \cite{MR0052042,DS88b}.
\end{proof}
\begin{figure}[H]
\subfloat{$\xymatrix{l^{2}\left(V\right)\ar@/^{1.5pc}/[rr]^{A} &  & \mathscr{H}\left(=\mathscr{H}\left(k\right)\right)\ar@/^{1.5pc}/[ll]^{B}}
$}\qquad{}\subfloat{$\xymatrix{l^{2}\ar@/_{1.3pc}/[rr]_{B^{*}} &  & \mathscr{H}\ar@/_{1.3pc}/[ll]_{A^{*}}}
$}

\caption{\label{fig:adj}The symmetric pair $\left(A,B\right)$, with $\mathscr{D}\left(A\right)=span\left\{ \delta_{x}\right\} $,
and $\mathscr{D}\left(B\right)=span\left\{ k_{x}\right\} $. }
\end{figure}

\begin{rem}
\label{rem:aa}Since $A^{*}\overline{A}$ is selfadjoint in $l^{2}$,
it has a projection valued measure $P^{\left(A\right)}$. The following
property of $P^{\left(A\right)}$ shall be used below: If $\psi$
is a Borel function on $[0,\infty)$, then the functional calculus
operator $\psi\left(A^{*}\overline{A}\right)$ has the following representation
\[
\psi\left(A^{*}\overline{A}\right)=\int_{0}^{\infty}\psi\left(\lambda\right)P^{\left(A\right)}\left(d\lambda\right).
\]
Given $\xi\in l^{2}$, we therefore have:
\[
\xi\in dom\left(\psi\left(A^{*}\overline{A}\right)\right)\Longleftrightarrow\int_{0}^{\infty}\left|\psi\left(\lambda\right)\right|^{2}\left\Vert P^{\left(A\right)}\left(d\lambda\right)\xi\right\Vert _{l^{2}}^{2}<\infty;
\]
and in this case, 
\[
\left\Vert \psi\left(A^{*}\overline{A}\right)\xi\right\Vert _{l^{2}}^{2}=\int_{0}^{\infty}\left|\psi\left(\lambda\right)\right|^{2}\left\Vert P^{\left(A\right)}\left(d\lambda\right)\xi\right\Vert _{l^{2}}^{2}.
\]

\end{rem}

\begin{rem}
Let $\left(V,k,\mathscr{H}\right)$ be as in \defref{dp} and \thmref{BB};
i.e., we are assuming that $\delta_{x}\in\mathscr{H}$, $\forall x\in V$.
Let $\left(A,B\right)$ be the associated symmetric pair; see \propref{AB}.
Set 
\begin{equation}
H{\scriptstyle AR}\left(\mathscr{H},k\right):=\left\{ h\in\mathscr{H}\mid\left(A^{*}h\right)\left(x\right)=0,\;\forall x\in V\right\} =\ker\left(A^{*}\right),\label{eq:ha1}
\end{equation}
and let 
\begin{equation}
D{\scriptstyle EL}\left(\mathscr{H},k\right):=\mbox{the \ensuremath{\mathscr{H}}-closure of \ensuremath{span\left\{ \delta_{x}\mid x\in V\right\} .}}\label{eq:ha2}
\end{equation}
\end{rem}
\begin{lem}
\label{lem:split}We have the following orthogonal splitting:
\begin{equation}
\mathscr{H}=H{\scriptstyle AR}\left(\mathscr{H},k\right)\oplus D{\scriptstyle EL}\left(\mathscr{H},k\right)\label{eq:ha3}
\end{equation}
\end{lem}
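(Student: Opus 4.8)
The plan is to read off the splitting \eqref{eq:ha3} directly from the range–kernel duality for the densely defined operator $A:l^{2}\left(V\right)\to\mathscr{H}$ of \defref{dp}, so that no genuine computation is needed. First I would identify the two summands with a closed subspace and its orthogonal complement. By \defref{dp} we have $\mathscr{D}\left(A\right)=span\left\{\delta_{x}\right\}$ and $A\delta_{x}=\delta_{x}\in\mathscr{H}$, so the range $\mathrm{ran}\left(A\right)$ is exactly $span\left\{\delta_{x}\mid x\in V\right\}$ regarded as a subspace of $\mathscr{H}$. Consequently the definition \eqref{eq:ha2} of $D{\scriptstyle EL}\left(\mathscr{H},k\right)$ says precisely that $D{\scriptstyle EL}\left(\mathscr{H},k\right)=\overline{\mathrm{ran}\left(A\right)}$, the $\mathscr{H}$-closure of the range of $A$.

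Next I would invoke the elementary identity $\ker\left(A^{*}\right)=\mathrm{ran}\left(A\right)^{\perp}$, valid for any densely defined operator between Hilbert spaces. Concretely, a vector $h\in\mathscr{H}$ is orthogonal to $\mathrm{ran}\left(A\right)$ iff $\left\langle A\delta_{x},h\right\rangle_{\mathscr{H}}=0$ for every $x\in V$; since $u\mapsto\left\langle Au,h\right\rangle_{\mathscr{H}}$ is then the zero (hence bounded) functional, the definition of the adjoint yields $h\in\mathscr{D}\left(A^{*}\right)$ with $A^{*}h=0$, and conversely $A^{*}h=0$ forces $\left\langle Au,h\right\rangle_{\mathscr{H}}=\left\langle u,A^{*}h\right\rangle_{l^{2}}=0$. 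Thus $\mathrm{ran}\left(A\right)^{\perp}=\ker\left(A^{*}\right)=H{\scriptstyle AR}\left(\mathscr{H},k\right)$, where the last equality is built into \eqref{eq:ha1}. Because the orthogonal complement of a set coincides with that of its closed linear span, $\mathrm{ran}\left(A\right)^{\perp}=\overline{\mathrm{ran}\left(A\right)}^{\perp}=D{\scriptstyle EL}\left(\mathscr{H},k\right)^{\perp}$, and combining the two displays gives $H{\scriptstyle AR}\left(\mathscr{H},k\right)=D{\scriptstyle EL}\left(\mathscr{H},k\right)^{\perp}$.

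Finally, applying the orthogonal decomposition of $\mathscr{H}$ with respect to the closed subspace $D{\scriptstyle EL}\left(\mathscr{H},k\right)$ yields
\[
\mathscr{H}=D{\scriptstyle EL}\left(\mathscr{H},k\right)\oplus D{\scriptstyle EL}\left(\mathscr{H},k\right)^{\perp}=D{\scriptstyle EL}\left(\mathscr{H},k\right)\oplus H{\scriptstyle AR}\left(\mathscr{H},k\right),
\]
which is exactly \eqref{eq:ha3}.

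There is essentially no analytic obstacle here: the entire content of the lemma is the single identity $\ker\left(A^{*}\right)=D{\scriptstyle EL}\left(\mathscr{H},k\right)^{\perp}$. The only point demanding any care is that $A$ is unbounded, so one must use the range–kernel duality in the form valid for densely defined (not necessarily bounded) operators; this is harmless, since it relies only on $A^{*}$ being well defined, which holds because $\mathscr{D}\left(A\right)=span\left\{\delta_{x}\right\}$ is dense in $l^{2}\left(V\right)$ (and, by \corref{ab}, $A$ is closable so that $A^{*}$ is densely defined as well).
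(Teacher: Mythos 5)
Your proof is correct and is essentially the paper's argument in slightly more abstract clothing: the paper likewise reduces \eqref{eq:ha3} to the identity $\left\{ \delta_{x}\mid x\in V\right\}^{\perp}=\ker\left(A^{*}\right)$, verified through the same duality $\left(A^{*}h\right)\left(x\right)=\left\langle A\delta_{x},h\right\rangle _{\mathscr{H}}=\left\langle \delta_{x},h\right\rangle _{\mathscr{H}}$ from \propref{AB}. Your packaging of this as the range--kernel duality $\ker\left(A^{*}\right)=\mathrm{ran}\left(A\right)^{\perp}$ for the densely defined operator $A$ is the same computation, and your care about $h$ landing in $\mathscr{D}\left(A^{*}\right)$ matches the paper's.
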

\begin{proof}
Since $D{\scriptstyle EL}\left(\mathscr{H},k\right)=\left\{ \delta_{x}\mid x\in V\right\} ^{\perp\perp}$
where ``$\perp$'' refers to the inner product $\left\langle \cdot,\cdot\right\rangle _{\mathscr{H}}$,
we need only to show that 
\begin{equation}
\left\langle h,\delta_{x}\right\rangle _{\mathscr{H}}=0,\;\forall x\in V\Longleftrightarrow\left(A^{*}h\right)\left(x\right)=0,\;\forall x\in V.\label{eq:ha4}
\end{equation}
But the last eq. (\ref{eq:ha4}) follows from the duality in \propref{AB}:

If $\left\langle h,\delta_{x}\right\rangle _{\mathscr{H}}=0$, $\forall x\in V$;
then $h\in dom\left(A^{*}\right)$, and 
\[
\left(A^{*}h\right)\left(x\right)=\left\langle \delta_{x},A^{*}h\right\rangle _{l^{2}}=\left\langle A\delta_{x},h\right\rangle _{\mathscr{H}}=\left\langle \delta_{x},h\right\rangle _{\mathscr{H}}.
\]
The desired conclusion (\ref{eq:ha3}) is now immediate from this.\end{proof}
\begin{question}
Assume the point-mass property (Def. \ref{def:dmp}), i.e., $\delta_{x}\in\mathscr{H}$,
$\forall x\in V$. How do we compute the following two positive definite
(p.d.) kernels? The p.d. kernel $k$, with
\[
V\times V\ni\left(x,y\right)\longrightarrow k\left(x,y\right);
\]
and the induced p.d. kernel 
\[
V\times V\ni\left(x,y\right)\longrightarrow\left\langle \delta_{x},\delta_{y}\right\rangle _{\mathscr{H}}.
\]
Note that we are not assuming that $\delta_{x}\in dom\left(\overline{B}\right)$.
How to compute the kernels $k\left(\cdot,\cdot\right)$ and $\left\langle \delta_{x},\delta_{y}\right\rangle _{\mathscr{H}}$?
See details below.\end{question}
\begin{thm}
\label{thm:BB}Let $k:V\times V\longrightarrow\mathbb{C}\left(\mbox{or }\mathbb{R}\right)$
be given positive definite, and assume 
\begin{equation}
\delta_{x}\in\mathscr{H}\left(=\mathscr{H}\left(k\right)\right),\quad\forall x\in V,\label{eq:ff0}
\end{equation}
i.e., $\mathscr{H}$ is a RKHS with point masses. Let $\left(A,B\right)$
be the canonical symmetric pair; see \defref{dp}. Then 
\begin{equation}
k_{x}\in dom\left(B^{*}\overline{B}\right),\quad\forall x\in V.\label{eq:ff1}
\end{equation}
\end{thm}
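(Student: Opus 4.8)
The plan is to unwind the definition of the composite selfadjoint operator $B^{*}\overline{B}$ and to verify membership in its domain directly, the entire argument resting on the operator inclusion $A\subset B^{*}$ furnished by \corref{ab}. By the usual convention for composing (unbounded) operators,
\[
dom\left(B^{*}\overline{B}\right)=\bigl\{h\in dom\bigl(\overline{B}\bigr)\;\big|\;\overline{B}h\in dom\left(B^{*}\right)\bigr\},
\]
so for each fixed $x\in V$ it suffices to check the two conditions (i) $k_{x}\in dom\bigl(\overline{B}\bigr)$, and (ii) $\overline{B}k_{x}\in dom\left(B^{*}\right)$.

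Condition (i) is immediate from the construction in \defref{dp}: since $k_{x}\in\mathscr{D}\left(B\right)$ and $\overline{B}$ extends $B$, we have $k_{x}\in dom\bigl(\overline{B}\bigr)$ with $\overline{B}k_{x}=Bk_{x}$. By (\ref{eq:B1}) this equals $\delta_{x}$ in the RKHS case (and by (\ref{eq:B2}) it equals $\delta_{x}-\delta_{o}$ in the relative case); in either case $\overline{B}k_{x}$ is a finite combination of point masses, hence lies in $\mathscr{D}\left(A\right)=span\left\{ \delta_{y}\right\}$. For condition (ii) I invoke \corref{ab}(1), which gives $A\subset B^{*}$ and therefore $\mathscr{D}\left(A\right)\subset dom\left(B^{*}\right)$. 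Combining this with the previous step yields $\overline{B}k_{x}\in\mathscr{D}\left(A\right)\subset dom\left(B^{*}\right)$, which is exactly (ii). Thus $k_{x}\in dom\bigl(B^{*}\overline{B}\bigr)$, and as a free byproduct one reads off the value $B^{*}\overline{B}k_{x}=B^{*}\delta_{x}=A\delta_{x}=\delta_{x}$ as an element of $\mathscr{H}$, using that $B^{*}$ agrees with $A$ on $\mathscr{D}\left(A\right)$.

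There is no genuine analytic obstacle here; the statement is essentially a bookkeeping consequence of the symmetric-pair structure. The one point that requires care, and the only place one could go wrong, is the distinction between $dom\left(B^{*}\right)$ and $dom\bigl(\overline{B}\bigr)$. It is tempting to think of $B^{*}\overline{B}$ as ``apply $\overline{B}$ twice,'' but $\delta_{x}=\overline{B}k_{x}$ need \emph{not} lie in $dom\bigl(\overline{B}\bigr)$ — indeed the discussion preceding \thmref{BB} deliberately refrains from assuming $\delta_{x}\in dom\bigl(\overline{B}\bigr)$. What rescues the argument is that the composite operator asks only for $\overline{B}k_{x}\in dom\left(B^{*}\right)$, and membership in $dom\left(B^{*}\right)$ for the finite combinations of point masses in $\mathscr{D}\left(A\right)$ is precisely what the inclusion $A\subset B^{*}$ delivers. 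I would keep the write-up at this level of detail, emphasizing the reduction to (i)--(ii) and the single appeal to \corref{ab}.
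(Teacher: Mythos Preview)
Your proof is correct and follows the same outline as the paper's: both observe $\overline{B}k_{x}=Bk_{x}=\delta_{x}$ and then reduce the claim to showing $\delta_{x}\in dom\left(B^{*}\right)$. The only difference is cosmetic---the paper verifies $\delta_{x}\in dom\left(B^{*}\right)$ by checking the defining estimate $\left|\left\langle \delta_{x},Bf\right\rangle _{l^{2}}\right|^{2}\leq C_{x}\left\Vert f\right\Vert _{\mathscr{H}}^{2}$ directly (via \lemref{hf} and the hypothesis $\delta_{x}\in\mathscr{H}$), whereas you obtain the same conclusion more economically by invoking the inclusion $A\subset B^{*}$ from \corref{ab}(1), which already encodes that estimate.
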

\begin{proof}
Since $\overline{B}\,k_{x}=B\,k_{x}=\delta_{x}$, the desired conclusion
follows if we show that $\delta_{x}\in dom\left(B^{*}\right)$, i.e.,
$\forall x\in V$, $\exists C_{x}<\infty$ s.t. 
\begin{equation}
\left|\left\langle \delta_{x},B\,f\right\rangle _{l^{2}}\right|^{2}\leq C_{x}\left\Vert f\right\Vert _{\mathscr{H}}^{2},\quad\forall f\in\mathscr{D}\left(B\right).\label{eq:ff2}
\end{equation}
Fix $x=x_{0}\in V$. Now set $f=\sum_{y}\xi_{y}k_{y}\in\mathscr{D}\left(B\right)$
(finite sum), then 
\[
\left\langle \delta_{x_{0}},B\,f\right\rangle _{l^{2}}=\left\langle \delta_{x_{0}},\sum\nolimits _{y}\xi_{y}\delta_{y}\right\rangle _{l^{2}}=\xi_{x_{0}}.
\]
But by (\ref{eq:ff0}) and \lemref{hf}, we get the desired constant
$C_{x}<\infty$ s.t. 
\[
\left|\xi_{x_{0}}\right|^{2}\leq C_{x_{0}}\underset{\left(y,z\right)\in V\times V}{\sum\sum}\xi_{y}\overline{\xi}_{z}k\left(y,z\right)=C_{x_{0}}\left\Vert f\right\Vert _{\mathscr{H}}^{2};
\]
and eq. (\ref{eq:ff2}) follows, see also (\ref{eq:fm0}) in \thmref{pm}.\end{proof}
\begin{cor}
Let $V,k,\mathscr{H}$ be as above, i.e., assuming $\delta_{x}\in\mathscr{H}$,
$\forall x\in V$. 
\begin{enumerate}
\item Then $AB:\mathscr{H}\longrightarrow\mathscr{H}$ is a symmetric operator
in $\mathscr{H}$ with dense domain $span\left\{ k_{x}\right\} =\mathscr{D}\left(B\right)$,
and $AB\,k_{x}=\delta_{x}\in\mathscr{H}$. 
\item Moreover, $B^{*}\overline{B}$ is a selfadjoint extension of $AB$
(as an operator in $\mathscr{H}$.)
\end{enumerate}
\end{cor}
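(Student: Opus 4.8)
The plan is to read off part (1) from the definitions and from the symmetric-pair identity of \propref{AB}, and to obtain part (2) as a bookkeeping consequence of \thmref{BB} together with the inclusion $A\subset B^{*}$ recorded in \corref{ab}.

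For part (1), I would first check that $AB$ is genuinely defined on all of $\mathscr{D}(B)$. For $f=\sum_{y}\xi_{y}k_{y}\in\mathscr{D}(B)$ (a finite sum) one has $Bf=\sum_{y}\xi_{y}\delta_{y}\in span\left\{ \delta_{x}\right\} =\mathscr{D}(A)$, so the composition applies and $\mathscr{D}(AB)=\mathscr{D}(B)=span\left\{ k_{x}\right\}$, which is dense in $\mathscr{H}$ by the construction of the RKHS in \defref{d1}. The evaluation $ABk_{x}=A\delta_{x}=\delta_{x}$ is immediate from Case~1 of \defref{dp} ($Bk_{x}=\delta_{x}$) together with $A\delta_{x}=\delta_{x}$. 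For symmetry I would take $f,g\in\mathscr{D}(B)$ and invoke (\ref{eq:spa}) twice: putting $u=Bf\in\mathscr{D}(A)$ and $v=g\in\mathscr{D}(B)$ gives $\left\langle ABf,g\right\rangle _{\mathscr{H}}=\left\langle Bf,Bg\right\rangle _{l^{2}}$, while the complex conjugate of (\ref{eq:spa}) with $u=Bg$ and $v=f$ gives $\left\langle f,ABg\right\rangle _{\mathscr{H}}=\left\langle Bf,Bg\right\rangle _{l^{2}}$ as well. Since both sides land on the same quantity $\left\langle Bf,Bg\right\rangle _{l^{2}}$, we conclude $\left\langle ABf,g\right\rangle _{\mathscr{H}}=\left\langle f,ABg\right\rangle _{\mathscr{H}}$, so $AB$ is symmetric. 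The only delicate point here is keeping track of the conjugation under the physicists' convention, but the two computations are arranged so as to meet at $\left\langle Bf,Bg\right\rangle _{l^{2}}$.

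For part (2) the plan is to verify the two ingredients of an extension, a domain inclusion and agreement of the action on $\mathscr{D}(AB)$, the selfadjointness of $B^{*}\overline{B}$ being already supplied by \corref{ab}. The domain inclusion is exactly the content of \thmref{BB}: $k_{x}\in\mathscr{D}(B^{*}\overline{B})$ for every $x$, so $\mathscr{D}(AB)=span\left\{ k_{x}\right\} \subseteq\mathscr{D}(B^{*}\overline{B})$. To match the actions I would use $\overline{B}k_{x}=Bk_{x}=\delta_{x}$ and then evaluate $B^{*}$ on $\delta_{x}$ via the inclusion $A\subset B^{*}$ of \corref{ab}(1): since $\delta_{x}\in\mathscr{D}(A)$, this forces $B^{*}\delta_{x}=A\delta_{x}=\delta_{x}$. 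Hence $B^{*}\overline{B}k_{x}=B^{*}\delta_{x}=\delta_{x}=ABk_{x}$, and $AB$ and $B^{*}\overline{B}$ coincide on the dense domain $span\left\{ k_{x}\right\}$; combined with the domain inclusion this says that the selfadjoint operator $B^{*}\overline{B}$ extends $AB$.

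I expect the genuine content to be light, as everything follows from the symmetric-pair machinery already assembled. If there is one step to watch, it is the use of $A\subset B^{*}$ to compute $B^{*}\delta_{x}$: this is what pins the selfadjoint extension $B^{*}\overline{B}$ to the ``naive'' value $\delta_{x}$ of $AB$ rather than to some other value, and it is the reason this particular selfadjoint extension (as opposed to an arbitrary one afforded by von Neumann's deficiency-space theory) reproduces $AB$ exactly.
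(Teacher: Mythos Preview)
Your proof is correct and follows the same route as the paper. The paper's argument is extremely terse---it simply records the chain $k_{x}\xrightarrow{B}\delta_{x}\xrightarrow{A}\delta_{x}$, invokes \thmref{BB} for $\delta_{x}\in dom(B^{*})$, and cites \corref{ab} for selfadjointness---whereas you spell out the symmetry of $AB$ explicitly via two applications of (\ref{eq:spa}) and make the use of $A\subset B^{*}$ in computing $B^{*}\delta_{x}$ explicit; these are elaborations the paper leaves implicit, not a different strategy.
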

\begin{proof}
We have
\[
\underset{\mathscr{H}}{k_{x}}\xrightarrow{\quad B\quad}\underset{l^{2}}{\delta_{x}}\xrightarrow{\quad A\quad}\underset{\mathscr{H}}{\delta_{x}}
\]
and we proved that $\delta_{x}\in dom\left(B^{*}\right)$ in \thmref{BB},
so we conclude that $B^{*}\overline{B}$ extends $AB$. By \corref{ab},
we know that $B^{*}\overline{B}$ is a selfadjoint operator in $\mathscr{H}$. \end{proof}
\begin{thm}
\label{thm:dd}Let $V,k$, and $\mathscr{H}$ be as above, and assume
$\delta_{x}\in\mathscr{H}$, $\forall x\in V$. Then 
\begin{gather}
\delta_{x}\in dom\left(A^{*}\overline{A}\right),\quad\forall x\in V\label{eq:e1}\\
\Updownarrow\nonumber \\
\left\langle \delta_{x},\delta_{\cdot}\right\rangle _{\mathscr{H}}\in l^{2},\quad\forall x\in V.\label{eq:e2}
\end{gather}
\end{thm}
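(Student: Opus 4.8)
The plan is to reduce the stated equivalence to an explicit description of the Hilbert-space adjoint $A^{*}:\mathscr{H}\to l^{2}$ together with its domain; once that is in hand, both (\ref{eq:e1}) and (\ref{eq:e2}) turn out to be the very same $l^{2}$-summability condition. First I would record that everything is well posed: by \corref{ab} we have $B\subset A^{*}$, so $\mathscr{D}\left(A^{*}\right)\supseteq span\left\{ k_{x}\right\} $ is dense in $\mathscr{H}$; hence $A$ is closable, $\overline{A}=A^{**}$, and the selfadjoint operator $A^{*}\overline{A}$ on $l^{2}$ (with its spectral calculus, see \remref{aa}) is legitimately defined.

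The key computation is to identify $A^{*}$ on its domain. For $h\in\mathscr{H}$ and finitely supported $\xi$, using $A\xi=\sum_{x}\xi\left(x\right)\delta_{x}$ in $\mathscr{H}$ and the convention that the inner product is conjugate-linear in the first slot,
\[
\left\langle A\xi,h\right\rangle _{\mathscr{H}}=\sum_{x\in V}\overline{\xi\left(x\right)}\left\langle \delta_{x},h\right\rangle _{\mathscr{H}}.
\]
Comparing with $\left\langle \xi,g\right\rangle _{l^{2}}=\sum_{x}\overline{\xi\left(x\right)}g\left(x\right)$, I conclude that $h\in\mathscr{D}\left(A^{*}\right)$ if and only if the function $x\mapsto\left\langle \delta_{x},h\right\rangle _{\mathscr{H}}$ lies in $l^{2}\left(V\right)$, and that in this case $\left(A^{*}h\right)\left(x\right)=\left\langle \delta_{x},h\right\rangle _{\mathscr{H}}$ for all $x\in V$. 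This is exactly the adjoint formula already used in the proof of \lemref{split}.

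With this description the theorem is immediate, and in fact holds for each fixed $x$ separately. Since $\delta_{x}\in\mathscr{D}\left(A\right)\subseteq\mathscr{D}\left(\overline{A}\right)$ with $\overline{A}\delta_{x}=A\delta_{x}=\delta_{x}\in\mathscr{H}$, the membership $\delta_{x}\in\mathscr{D}\left(A^{*}\overline{A}\right)$ is by definition equivalent to $\overline{A}\delta_{x}=\delta_{x}$ (now read as a vector in $\mathscr{H}$) belonging to $\mathscr{D}\left(A^{*}\right)$. Applying the domain characterization with $h=\delta_{x}$ gives
\[
\delta_{x}\in\mathscr{D}\left(A^{*}\overline{A}\right)\;\Longleftrightarrow\;\big(y\longmapsto\left\langle \delta_{y},\delta_{x}\right\rangle _{\mathscr{H}}\big)\in l^{2}\left(V\right),
\]
and by conjugate symmetry $\left|\left\langle \delta_{y},\delta_{x}\right\rangle _{\mathscr{H}}\right|=\left|\left\langle \delta_{x},\delta_{y}\right\rangle _{\mathscr{H}}\right|$, so the right-hand side is precisely (\ref{eq:e2}). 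Quantifying over $x\in V$ then yields (\ref{eq:e1})$\Leftrightarrow$(\ref{eq:e2}).

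The only genuinely delicate point is the bookkeeping with unbounded-operator domains: one must keep the two copies of $\delta_{x}$ (one in $l^{2}$, one in $\mathscr{H}$) distinct, verify that $\mathscr{D}\left(A^{*}\overline{A}\right)$ hides no condition beyond $\overline{A}\delta_{x}\in\mathscr{D}\left(A^{*}\right)$, and make sure the adjoint formula is applied to bona fide elements of $\mathscr{H}$ — which is exactly where the standing hypothesis $\delta_{x}\in\mathscr{H}$ is used. Everything else is the routine pairing above, so I expect no analytic obstacle beyond this careful tracking of domains.
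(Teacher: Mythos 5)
Your proposal is correct and follows essentially the same route as the paper: both reduce the statement, via $\overline{A}\delta_{x}=\delta_{x}$, to the membership $\delta_{x}\in\mathscr{D}\left(A^{*}\right)$, and then characterize that membership by the boundedness of $\xi\mapsto\left\langle A\xi,\delta_{x}\right\rangle _{\mathscr{H}}=\sum_{y}\overline{\xi\left(y\right)}\left\langle \delta_{y},\delta_{x}\right\rangle _{\mathscr{H}}$ on finitely supported $\xi$, i.e.\ by the $l^{2}$-condition (\ref{eq:e2}). Your explicit identification of $\mathscr{D}\left(A^{*}\right)$ and of the formula $\left(A^{*}h\right)\left(x\right)=\left\langle \delta_{x},h\right\rangle _{\mathscr{H}}$ is just a slightly more systematic packaging of the same estimate the paper writes as its displayed inequality.
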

\begin{proof}
Note (\ref{eq:e2}) means 
\begin{equation}
\sum_{y}\left|\left\langle \delta_{x},\delta_{y}\right\rangle _{\mathscr{H}}\right|^{2}<\infty,\quad\forall x\in V.\label{eq:e3}
\end{equation}
Since $\overline{A}\delta_{x_{0}}=\delta_{x_{0}}$, we now show that
(\ref{eq:e1}) holds $\Longleftrightarrow$ (\ref{eq:e3}) is satisfied.
That is, $\exists C_{x_{0}}<\infty$, s.t. 
\begin{equation}
\left|\left\langle A\xi,\delta_{x_{0}}\right\rangle _{\mathscr{H}}\right|^{2}\leq C_{x_{0}}\left\Vert \xi\right\Vert _{l^{2}}^{2}=C_{x_{0}}\sum_{y}\left|\xi_{y}\right|^{2},\quad\forall\left(\xi_{y}\right)\;\mbox{finitely indexed.}\label{eq:e4}
\end{equation}
But $\mbox{LHS}_{\left(\ref{eq:e4}\right)}=\left|\sum\nolimits _{y}\overline{\xi}_{y}\left\langle \delta_{y},\delta_{x_{0}}\right\rangle _{\mathscr{H}}\right|^{2}$,
so (\ref{eq:e4}) holds $\Longleftrightarrow$ $\left[y\longrightarrow\left\langle \delta_{y},\delta_{x_{0}}\right\rangle _{\mathscr{H}}\right]\in l^{2}$
which is the desired conclusion in (\ref{eq:e2}).\end{proof}
\begin{rem}
Note that (\ref{eq:e2}) is not\emph{ }automatic. Examples showing
this? See \lemref{dd} and Examples \ref{exa:ddbm}, \ref{exa:bm}
below.\end{rem}
\begin{lem}
\label{lem:dd}Let $V,k,\mathscr{H}$ be as above, assuming $\delta_{x}\in\mathscr{H}$,
for all $x\in V$. Then 
\[
\left\langle \delta_{x},\delta_{y}\right\rangle _{\mathscr{H}}=\lim_{F}\left(K_{F}^{-1}\right)_{xy},
\]
where the RHS is the inductive limit over the filter of all finite
subsets $F$ of $V$, and 
\[
K_{F}=\left(\left\langle k_{x},k_{y}\right\rangle _{\mathscr{H}}\right)_{\left(x,y\right)\in F\times F}=\left(k\left(x,y\right)\right)_{\left(x,y\right)\in F\times F},
\]
i.e., the Gramian matrix.\end{lem}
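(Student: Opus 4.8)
The plan is to realize $\langle\delta_{x},\delta_{y}\rangle_{\mathscr{H}}$ as a limit of finite-dimensional projection inner products, using the explicit formula for $P_{F}\delta_{y}$ already established in \lemref{proj}. Fix two points $x,y\in V$ and restrict attention to the cofinal family of finite subsets $F\in\mathscr{F}\left(V\right)$ with $\left\{ x,y\right\} \subset F$; for such $F$ the entry $\left(K_{F}^{-1}\right)_{xy}$ is well defined, and these entries form a net indexed by the directed set $\left(\mathscr{F}\left(V\right),\subseteq\right)$. The assumption $\delta_{x}\in\mathscr{H}$ for all $x\in V$ (Def. \ref{def:dmp}) is what makes all of $\delta_{x},\delta_{y},P_{F}\delta_{x},P_{F}\delta_{y}$ legitimate vectors in $\mathscr{H}$.

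First I would compute, for a single finite $F\ni x,y$, the quantity $\left\langle \delta_{x},P_{F}\delta_{y}\right\rangle _{\mathscr{H}}$. By \lemref{proj} we have $P_{F}\delta_{y}=\sum_{w\in F}\left(K_{F}^{-1}\delta_{y}\right)\left(w\right)k_{w}$, so
\[
\left\langle \delta_{x},P_{F}\delta_{y}\right\rangle _{\mathscr{H}}=\sum_{w\in F}\left(K_{F}^{-1}\delta_{y}\right)\left(w\right)\left\langle \delta_{x},k_{w}\right\rangle _{\mathscr{H}}=\sum_{w\in F}\left(K_{F}^{-1}\delta_{y}\right)\left(w\right)\delta_{x}\left(w\right)=\left(K_{F}^{-1}\delta_{y}\right)\left(x\right),
\]
where I used the reproducing property $\left\langle k_{w},\delta_{x}\right\rangle _{\mathscr{H}}=\delta_{x}\left(w\right)$ (together with the fact that $\delta_{x}\left(w\right)$ is real) to evaluate $\left\langle \delta_{x},k_{w}\right\rangle _{\mathscr{H}}$. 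Since $\left(K_{F}^{-1}\delta_{y}\right)\left(x\right)=\left(K_{F}^{-1}\right)_{xy}$, this gives the clean identity $\left\langle \delta_{x},P_{F}\delta_{y}\right\rangle _{\mathscr{H}}=\left(K_{F}^{-1}\right)_{xy}$ for every finite $F$ containing $x$ and $y$. (One can equally write this as $\left\langle P_{F}\delta_{x},P_{F}\delta_{y}\right\rangle _{\mathscr{H}}$ using that $P_{F}$ is a self-adjoint idempotent; the present form is the most economical since it only needs the formula for $P_{F}\delta_{y}$.)

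The second step is to pass to the limit over $F\nearrow V$. Because $\mathscr{H}$ is by construction (\defref{d1}) the closed span of $\left\{ k_{z}\mid z\in V\right\} $, the subspaces $\mathscr{H}_{F}=\text{span}\left\{ k_{z}\right\} _{z\in F}$ increase with $F$ and their union is dense in $\mathscr{H}$; hence the net of projections $P_{F}$ converges strongly to the identity $I_{\mathscr{H}}$. In particular $P_{F}\delta_{y}\to\delta_{y}$ in $\mathscr{H}$-norm, and by continuity of the inner product $\left\langle \delta_{x},P_{F}\delta_{y}\right\rangle _{\mathscr{H}}\to\left\langle \delta_{x},\delta_{y}\right\rangle _{\mathscr{H}}$. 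Combining with the identity of the previous paragraph yields $\left\langle \delta_{x},\delta_{y}\right\rangle _{\mathscr{H}}=\lim_{F}\left(K_{F}^{-1}\right)_{xy}$, as claimed.

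The only genuinely nontrivial point — the ``main obstacle'' — is the convergence $P_{F}\delta_{y}\to\delta_{y}$, and this is exactly where the standing hypothesis $\delta_{y}\in\mathscr{H}$ is indispensable: the strong convergence $P_{F}\to I_{\mathscr{H}}$ is a general fact for an increasing family of closed subspaces with dense union, but it gives convergence of $P_{F}h$ only for $h$ that already lie in $\mathscr{H}$. Once $\delta_{y}$ is known to be a bona fide element of $\mathscr{H}$, the argument is routine. I would remark in passing that this limit formula specializes the diagonal case $\left\langle \delta_{x_{1}},\delta_{x_{1}}\right\rangle _{\mathscr{H}}=\left\Vert \delta_{x_{1}}\right\Vert _{\mathscr{H}}^{2}=\sup_{F}\left(K_{F}^{-1}\delta_{x_{1}}\right)\left(x_{1}\right)$ already recorded in \lemref{proj2} and \corref{proj1}, the monotone supremum there being the $x=y$ instance of the net limit here.
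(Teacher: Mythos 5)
Your proposal is correct and follows essentially the same route as the paper: both write $\left\langle \delta_{x},\delta_{y}\right\rangle _{\mathscr{H}}=\lim_{F}\left\langle \delta_{x},P_{F}\delta_{y}\right\rangle _{\mathscr{H}}$, expand $P_{F}\delta_{y}$ via Lemma \ref{lem:proj}, and evaluate with the reproducing property to get $\left(K_{F}^{-1}\right)_{xy}$. The only difference is that you spell out the strong convergence $P_{F}\to I_{\mathscr{H}}$ and the role of the hypothesis $\delta_{y}\in\mathscr{H}$, which the paper leaves implicit.
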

\begin{proof}
We have 
\begin{eqnarray*}
\left\langle \delta_{x},\delta_{y}\right\rangle _{\mathscr{H}} & = & \lim_{F}\left\langle \delta_{x},P_{F}\left(\delta_{y}\right)\right\rangle _{\mathscr{H}}\\
 & = & \lim_{F}\left\langle \delta_{x},\sum\nolimits _{s}\left(K_{F}^{-1}\right)_{s\,y}k_{s}\right\rangle _{\mathscr{H}}\;\left(\mbox{Lemma }\ref{lem:proj}\right)\\
 & = & \lim_{F}\left(K_{F}^{-1}\right)_{xy}
\end{eqnarray*}
which is the assertion. 
\end{proof}

\begin{flushleft}
\textbf{Infinite square matrices.}
\par\end{flushleft}
\begin{lem}
Let $k,V$ and $\mathscr{H}\left(=\mathscr{H}\left(k\right)\right)$
be as above, and assume $\delta_{x}\in\mathscr{H}$ for all $x\in V$.
Consider three $\infty\times\infty$ matrices, $D$, $K$, and $C$
as follows:
\begin{equation}
D_{xy}:=\left\langle \delta_{x},\delta_{y}\right\rangle _{\mathscr{H}}\label{eq:m1}
\end{equation}
Set $C$ s.t. 
\begin{equation}
\delta_{x}=\sum_{y}C_{xy}k_{y},\label{eq:m2}
\end{equation}
and let 
\begin{equation}
K_{xy}=k\left(x,y\right).\label{eq:m3}
\end{equation}
Then, 
\begin{equation}
D=CKC^{tr},\label{eq:m4}
\end{equation}
or equivalently, 
\[
D_{xx'}=\left(CKC^{tr}\right)_{xx'},\quad\forall\left(x,x'\right)\in V\times V.
\]

Moreover, let $I=\left(\delta_{xy}\right)_{\left(x,y\right)\in V\times V}$
be the $\infty\times\infty$ identity matrix, then we have
\[
I=CK.
\]
\end{lem}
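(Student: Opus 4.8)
The plan is to reduce both matrix identities to the reproducing property $\langle k_{z},\varphi\rangle_{\mathscr{H}}=\varphi(z)$ together with the symmetry $k(y,z)=k(z,y)$ of the (real) kernel, and to relegate all genuine difficulty to the convergence of the infinite sums; I work in the real case, the complex case being identical after replacing $C^{tr}$ by $C^{*}$ and inserting conjugates. First I would pin down the coefficient matrix $C$: by \lemref{proj}, for each finite $F\ni x$ one has $P_{F}\delta_{x}=\sum_{y\in F}(K_{F}^{-1})_{xy}k_{y}$, and since $span\{k_{y}\}$ is dense in $\mathscr{H}$ the projections converge strongly, $P_{F}\delta_{x}\to\delta_{x}$. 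Thus the expansion $\delta_{x}=\sum_{y}C_{xy}k_{y}$ is to be read as this limit with $C_{xy}=\lim_{F}(K_{F}^{-1})_{xy}$, and by \lemref{dd} these coefficients are exactly $C_{xy}=\langle\delta_{x},\delta_{y}\rangle_{\mathscr{H}}=D_{xy}$, so the incidental identity $C=D$ drops out along the way.

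For $I=CK$ I would evaluate the expansion pointwise, using that point-evaluation $\varphi\mapsto\varphi(z)=\langle k_{z},\varphi\rangle_{\mathscr{H}}$ is $\mathscr{H}$-continuous and hence passes through the convergent sum:
\[
\delta_{xz}=\delta_{x}(z)=\langle k_{z},\delta_{x}\rangle_{\mathscr{H}}=\sum_{y}C_{xy}\langle k_{z},k_{y}\rangle_{\mathscr{H}}=\sum_{y}C_{xy}\,k(z,y),
\]
after which $k(z,y)=k(y,z)=K_{yz}$ identifies the right-hand side as $(CK)_{xz}$, giving $CK=I$. For $D=CKC^{tr}$ I would expand only the second factor and invoke the reproducing property in the form $\langle\delta_{x},k_{z}\rangle_{\mathscr{H}}=\delta_{x}(z)=\delta_{xz}$, obtaining $D_{xx'}=\langle\delta_{x},\delta_{x'}\rangle_{\mathscr{H}}=\sum_{z}C_{x'z}\langle\delta_{x},k_{z}\rangle_{\mathscr{H}}=\sum_{z}C_{x'z}\delta_{xz}=C_{x'x}=(C^{tr})_{xx'}$; thus $D=C^{tr}$, and combining this with $CK=I$ yields $D=C^{tr}=(CK)C^{tr}=CKC^{tr}$, as claimed.

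The main obstacle is entirely analytic rather than algebraic: each equality above secretly interchanges an infinite summation with a continuous linear functional, and the symbol $\sum_{y}C_{xy}k_{y}$ really denotes the double limit $\lim_{F}\sum_{y\in F}(K_{F}^{-1})_{xy}k_{y}$ in which both the index range and the coefficients vary with $F$. To make the interchanges legitimate I would first establish each identity at finite level, where $K_{F}^{-1}K_{F}=I_{F}$ is a genuine finite matrix identity, and then pass to the limit using $P_{F}\delta_{x}\to\delta_{x}$ in $\mathscr{H}$, the convergence $(K_{F}^{-1})_{xy}\to\langle\delta_{x},\delta_{y}\rangle_{\mathscr{H}}$ of \lemref{dd}, and the continuity of the $\mathscr{H}$-inner product and of point-evaluation to move the limit inside the sums.
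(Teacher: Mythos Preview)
Your argument is correct and follows essentially the same route as the paper: both derive $I=CK$ by pairing the expansion $\delta_{x}=\sum_{y}C_{xy}k_{y}$ against $k_{z}$ via the reproducing property, and both handle the infinite sums by appeal to the finite-$F$ limit considerations of \lemref{dd}. The one minor difference is in the second identity: the paper expands \emph{both} factors of $\langle\delta_{x},\delta_{x'}\rangle_{\mathscr{H}}$ to obtain the double sum $\sum_{y}\sum_{y'}C_{xy}K_{yy'}C_{x'y'}$ directly, whereas you expand only one factor to get $D=C^{tr}$ first and then multiply through by $CK=I$; your detour in effect anticipates the paper's subsequent \corref{CD}, and your explicit identification $C_{xy}=\lim_{F}(K_{F}^{-1})_{xy}=\langle\delta_{x},\delta_{y}\rangle_{\mathscr{H}}$ via \lemref{dd} is a welcome clarification that the paper leaves implicit at this stage.
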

\begin{proof}
In our discussion of infinite matrices below, for the infinite summations
involved, we are making use of the limit considerations which we made
precise in the proof of \lemref{dd} above.

Apply $\left.k_{z}\right\rangle $ to both sides in (\ref{eq:m2}),
then 
\[
\delta_{xz}=\sum_{y}C_{xy}K_{yz}\Longleftrightarrow I=CK.\;(\mbox{matrix product})
\]
Using $\left\langle \delta_{x},k_{z}\right\rangle _{\mathscr{H}}=\delta_{xz}$,
we get 
\begin{equation}
\left(CKC^{tr}\right)_{xx'}=\sum_{y}\sum_{y'}C_{xy}K_{yy'}C_{x'y'}.\label{eq:m5}
\end{equation}
Also, 
\begin{eqnarray*}
D_{xx'} & \underset{\text{by \ensuremath{\left(\ref{eq:m1}\right)}}}{=} & \left\langle \delta_{x},\delta_{x'}\right\rangle _{\mathscr{H}}\\
 & \underset{\text{by \ensuremath{\left(\ref{eq:m2}\right)}}}{=} & \left\langle \sum\nolimits _{y}C_{xy}k_{y},\sum\nolimits _{y'}C_{x'y'}k_{y'}\right\rangle _{\mathscr{H}}\\
 & = & \sum\nolimits _{y}\sum\nolimits _{y'}C_{xy}C_{x'y'}\left\langle k_{y},k_{y'}\right\rangle _{\mathscr{H}}\\
 & \underset{\text{by \ensuremath{\left(\ref{eq:m3}\right)}}}{=} & \left(CKC^{tr}\right)_{xx'}
\end{eqnarray*}
which is the desired conclusion, see (\ref{eq:m5}).
\end{proof}
In the discussion below we shall consider matrix algebra for ``infinite
square matrices.\textquotedblright{} More precisely, we shall apply
matrix algebra to pairs of matrices where in each matrix factor, both
rows and columns are indexed by the same given countable infinite
set $V$. Nonetheless, matrix multiplication in this context will
require our use of the limit considerations from the proof of \lemref{dd}
above. In other words, the infinite sums entail a limit over filters
of finite subsets of $V$, as discussed in the proof of \lemref{dd}.
\begin{lem}
We have $C=K^{-1}$, or equivalently, 
\[
\left(KC\right)_{xy}=\delta_{xz}\Longleftrightarrow\sum_{z}K_{xz}C_{zy}=\delta_{xy}.
\]
\end{lem}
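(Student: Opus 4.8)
The plan is to upgrade the one-sided identity $I=CK$, already established in the preceding lemma, to the full statement $C=K^{-1}$ by proving the complementary identity $KC=I$, that is $\sum_{z}K_{xz}C_{zy}=\delta_{xy}$ for all $x,y\in V$. Recall that $I=CK$ was obtained by applying $\left.k_{z}\right\rangle$ to the defining relation (\ref{eq:m2}); since having a left inverse of the $\infty\times\infty$ matrix $K$ does not by itself force a right inverse, a genuine extra argument is needed. The crucial intermediate fact will be that the matrix $C$ is symmetric. Throughout I work in the real (symmetric) setting, consistent with the transpose appearing in (\ref{eq:m4}); the complex case is obtained at the end by taking adjoints in place of transposes.

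First I would show that $C$ is symmetric by identifying it with the Gram matrix of the point masses. Pairing the defining expansion $\delta_{x}=\sum_{w}C_{xw}k_{w}$ from (\ref{eq:m2}) against $\delta_{y}$ in the $\mathscr{H}$-inner product, and using the reproducing property $\left\langle k_{w},\delta_{y}\right\rangle _{\mathscr{H}}=\delta_{y}(w)=\delta_{wy}$, one gets
\[
C_{xy}=\sum_{w}C_{xw}\left\langle k_{w},\delta_{y}\right\rangle _{\mathscr{H}}=\left\langle \delta_{x},\delta_{y}\right\rangle _{\mathscr{H}}.
\]
Hence $C$ equals the matrix $\left(\left\langle \delta_{x},\delta_{y}\right\rangle _{\mathscr{H}}\right)$, which is symmetric because the $\mathscr{H}$-inner product is. Equivalently, one may invoke \lemref{dd}: $C_{xy}=\left\langle \delta_{x},\delta_{y}\right\rangle _{\mathscr{H}}=\lim_{F}(K_{F}^{-1})_{xy}$ exhibits $C$ as a limit of the symmetric finite matrices $K_{F}^{-1}$.

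With symmetry in hand I would compute $KC$ directly. For fixed $x,y$,
\[
(KC)_{xy}=\sum_{z}K_{xz}C_{zy}=\sum_{z}\left\langle k_{x},k_{z}\right\rangle _{\mathscr{H}}C_{yz}=\Big\langle k_{x},\sum_{z}C_{yz}k_{z}\Big\rangle _{\mathscr{H}}=\left\langle k_{x},\delta_{y}\right\rangle _{\mathscr{H}}=\delta_{xy},
\]
where the second equality uses $K_{xz}=k(x,z)=\left\langle k_{x},k_{z}\right\rangle _{\mathscr{H}}$ together with the symmetry $C_{zy}=C_{yz}$, the third pulls the (filtered) sum out of the inner product, and the fourth is again (\ref{eq:m2}). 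This is precisely $KC=I$, and combined with the known $I=CK$ it yields $C=K^{-1}$. In the complex setting the same conclusion follows more cheaply by taking the adjoint of $CK=I$ and using that both $C$ and $K$ are Hermitian.

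The main obstacle is one of rigor rather than of ideas: $K$ and $C$ are genuine $\infty\times\infty$ matrices, so each exchange of a sum with an inner product or with a limit must be read through the filter-of-finite-subsets limit introduced in the proof of \lemref{dd}. Concretely, I would justify the step $\sum_{z}\left\langle k_{x},k_{z}\right\rangle _{\mathscr{H}}C_{yz}=\langle k_{x},\sum_{z}C_{yz}k_{z}\rangle_{\mathscr{H}}$ by replacing $\delta_{y}$ with its projections $P_{F}\delta_{y}=\sum_{z\in F}(K_{F}^{-1})_{zy}k_{z}$ from \lemref{proj}, noting that $P_{F}\delta_{y}\to\delta_{y}$ in $\mathscr{H}$-norm with $\left\Vert \delta_{y}\right\Vert _{\mathscr{H}}^{2}$ controlled by \corref{proj1}, and invoking continuity of the inner product to pass to the limit. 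Once this bookkeeping is in place, both displayed computations are routine.
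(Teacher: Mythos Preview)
Your argument is correct, and in fact more complete than the paper's own proof. The paper simply applies $\left.k_{x'}\right\rangle$ to the expansion (\ref{eq:m2}) and records $\delta_{xx'}=\sum_{y}C_{xy}K_{yx'}=(CK)_{xx'}$, i.e., it re-derives $CK=I$, the identity already obtained in the preceding lemma, and declares $C=K^{-1}$ without separately verifying $KC=I$. You are right that for $\infty\times\infty$ matrices a one-sided inverse need not be two-sided, and your route supplies exactly the missing ingredient: from $\left\langle \delta_{x},\delta_{y}\right\rangle _{\mathscr{H}}=\sum_{w}C_{xw}\left\langle k_{w},\delta_{y}\right\rangle _{\mathscr{H}}=C_{xy}$ you identify $C$ with the Gram matrix $D$ and hence get $C=C^{tr}$, after which $(KC)_{xy}=\sum_{z}\langle k_{x},k_{z}\rangle_{\mathscr{H}}C_{yz}=\langle k_{x},\delta_{y}\rangle_{\mathscr{H}}=\delta_{xy}$ is immediate. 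In effect you prove the symmetry assertion $D=C^{tr}=C$ of \corref{CD} first and use it to obtain $KC=I$, whereas the paper proceeds in the opposite order and leans implicitly on symmetry without saying so. Your remarks on handling the infinite sums via the filter of finite subsets and the projections $P_{F}\delta_{y}$ from \lemref{proj} are also to the point; the paper flags this convention before the lemma but does not spell out the passage to the limit at this step.
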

\begin{proof}
Apply $\left.k_{x'}\right\rangle $ to both sides in (\ref{eq:m2}),
and we get 
\[
\delta_{xx'}=\sum_{y}C_{xy}K_{yx'}=\left(CK\right)_{xx'}.
\]
\end{proof}
\begin{cor}
\label{cor:CD}Assuming now that the given positive definite function
$k$ is real valued, we then get the following: 
\[
D=C^{tr}=\left(K^{tr}\right)^{-1}=K^{-1}.
\]
\end{cor}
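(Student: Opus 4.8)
The plan is to chain together the matrix identities already established and feed in the one new hypothesis, that $k$ is real valued. The crucial preliminary remark is that positive definiteness in the sense of (\ref{eq:pd1}) forces $k$ to be Hermitian (otherwise the left-hand side of (\ref{eq:pd1}) would not be real for all complex coefficients $\{c_x\}$); in the real-valued case this says $k(x,y) = k(y,x)$, i.e. the infinite matrix $K$ is symmetric, $K^{tr} = K$. This already yields the last equality $(K^{tr})^{-1} = K^{-1}$ of the asserted chain, with no work on $D$ or $C$ required.

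Next I would bring in the two identities proved in the lemmas immediately above: $D = CKC^{tr}$ from (\ref{eq:m4}), together with $CK = I$ (equivalently $C = K^{-1}$). Substituting the second into the first gives $D = (CK)C^{tr} = C^{tr}$, which is the first equality $D = C^{tr}$. For the remaining equalities I would transpose $C = K^{-1}$ and invoke the symmetry just established: $C^{tr} = (K^{-1})^{tr} = (K^{tr})^{-1} = K^{-1}$. Reading the four expressions together closes the loop, $D = C^{tr} = (K^{tr})^{-1} = K^{-1}$.

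The content of the argument is therefore a two-line computation; the only point requiring care is not algebraic but analytic, since $D$, $C$, $K$ are \emph{infinite} matrices. Every product appearing above ($CK$, $CKC^{tr}$, and the transpose--inverse manipulation) must be interpreted through the filter-of-finite-subsets limit convention fixed in the proof of \lemref{dd}, under which associativity of $(CK)C^{tr}$ and the cancellation $CK = I$ are legitimate. I would state this convention explicitly at the outset, exactly as in that proof; once the limit interpretation is in force the corollary is immediate, and I expect no further obstacle.
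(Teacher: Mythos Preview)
Your argument is correct and follows essentially the same route as the paper: establish $K^{tr}=K$ from the real-valuedness of $k$, then feed the identities $D=CKC^{tr}$ and $CK=I$ (equivalently $C=K^{-1}$) from the preceding lemmas to obtain $D=C^{tr}=(K^{tr})^{-1}=K^{-1}$. The only cosmetic difference is that the paper justifies $K=K^{tr}$ via the symmetry of the real inner product, $k(x,y)=\langle k_x,k_y\rangle=\langle k_y,k_x\rangle=k(y,x)$, rather than via the Hermitian-from-positive-definite observation you use; both are valid.
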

\begin{proof}
Note that $K=K^{tr}$ if $k:V\times V\longrightarrow\mathbb{R}$ is
real valued since 
\[
k\left(x,y\right)=\left\langle k_{x},k_{y}\right\rangle =\left\langle k_{y},k_{x}\right\rangle =k\left(y,x\right);
\]
and so $D=\left(K^{tr}\right)^{-1}=K^{-1}$. 
\end{proof}
\textbf{Spectral Theory. }Let $V,k$, and $\mathscr{H}=\mathscr{H}\left(k\right)$
be as above. Let $\left(A,B\right)$ be the associated dual pair of
operators from \corref{ab}. Since $A^{*}\overline{A}$ is selfadjoint
in $l^{2}=l^{2}\left(V\right)$ with dense domain, it has a canonical
$l^{2}$-projection valued measure $P^{\left(A\right)}\left(\cdot\right)$
defined on the Borel $\sigma$-algebra $\mathscr{B}_{+}$ of subsets
of $[0,\infty)$. We set
\begin{equation}
d\mu_{x}^{\left(A\right)}\left(\lambda\right):=\left\Vert P^{\left(A\right)}\left(d\lambda\right)\delta_{x}\right\Vert _{l^{2}}^{2}.\label{eq:a1}
\end{equation}

\begin{lem}
\label{lem:dsp}~
\begin{enumerate}
\item The following conclusions hold for the measure $\mu_{x}^{\left(A\right)}$: 

\begin{enumerate}
\item Moments of order $0,1,$ and $2$:
\begin{align}
 & \;\mu_{x}^{\left(A\right)}\left([0,\infty)\right)=1,\quad\forall x\in V;\label{eq:mm1}\\
 & \int_{0}^{\infty}\lambda\,d\mu_{x}^{\left(A\right)}\left(\lambda\right)=\left\Vert \delta_{x}\right\Vert _{\mathscr{H}}^{2};\;\mbox{and}\label{eq:mm2}\\
 & \int_{0}^{\infty}\lambda^{2}d\mu_{x}^{\left(A\right)}\left(\lambda\right)=\sum_{y\in V}\left|\left\langle \delta_{y},\delta_{x}\right\rangle _{\mathscr{H}}\right|^{2}=\left\Vert A^{*}\delta_{x}\right\Vert _{l^{2}}^{2};\label{eq:mm3}
\end{align}

\item The covariance of the measure $\mu_{x}^{\left(A\right)}$ is:
\begin{equation}
cov\left(\mu_{x}^{\left(A\right)}\right)=\left\Vert A^{*}\delta_{x}\right\Vert _{l^{2}}^{2}-\left\Vert \delta_{x}\right\Vert _{\mathscr{H}}^{4}.\label{eq:mm4}
\end{equation}

\end{enumerate}
\item Moreover, the first moment in (\ref{eq:mm2}) is finite iff $\delta_{x}\in\mathscr{H}$.
The second moment in (\ref{eq:mm3}) is finite iff $\delta_{x}\in dom\left(A^{*}\right)$. 
\end{enumerate}
\end{lem}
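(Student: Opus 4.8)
The plan is to read each of the three moments directly off the spectral representation of $A^{*}\overline{A}$ recorded in \remref{aa}, and then to translate the resulting $l^{2}$-expressions into the $\mathscr{H}$-quantities appearing on the right-hand sides of (\ref{eq:mm1})--(\ref{eq:mm3}) by means of the polar decomposition from \corref{ab} and the symmetric-pair duality from \propref{AB}. Throughout, one must keep careful track of the two distinct copies of $\delta_{x}$ in play: the generator $\delta_{x}\in l^{2}\left(V\right)$ of $\mathscr{D}\left(A\right)$, and its image $A\delta_{x}=\delta_{x}\in\mathscr{H}$.

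For the zeroth moment I would simply write $\mu_{x}^{\left(A\right)}\left([0,\infty)\right)=\int_{0}^{\infty}\|P^{\left(A\right)}\left(d\lambda\right)\delta_{x}\|_{l^{2}}^{2}=\|\delta_{x}\|_{l^{2}}^{2}=1$, since $\delta_{x}$ is a unit vector in $l^{2}\left(V\right)$. For the first moment, the functional calculus of \remref{aa} gives $\int_{0}^{\infty}\lambda\,d\mu_{x}^{\left(A\right)}=\|(A^{*}\overline{A})^{1/2}\delta_{x}\|_{l^{2}}^{2}$, which is legitimate because $\delta_{x}\in\mathscr{D}\left(A\right)\subset dom\left(\overline{A}\right)=dom\left((A^{*}\overline{A})^{1/2}\right)$. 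I would then invoke the factorization $\overline{A}=V(A^{*}\overline{A})^{1/2}$ of \corref{ab}, where $V$ restricts to an isometry on $\overline{\mathrm{ran}}\,(A^{*}\overline{A})^{1/2}$ (its initial space), to conclude $\|(A^{*}\overline{A})^{1/2}\delta_{x}\|_{l^{2}}=\|\overline{A}\delta_{x}\|_{\mathscr{H}}$; finally $\overline{A}\delta_{x}=A\delta_{x}=\delta_{x}$ yields the value $\|\delta_{x}\|_{\mathscr{H}}^{2}$ of (\ref{eq:mm2}).

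For the second moment I would use $\int_{0}^{\infty}\lambda^{2}\,d\mu_{x}^{\left(A\right)}=\|A^{*}\overline{A}\delta_{x}\|_{l^{2}}^{2}$, valid precisely when $\delta_{x}\in dom\left(A^{*}\overline{A}\right)$; since $\overline{A}\delta_{x}=\delta_{x}$ this equals $\|A^{*}\delta_{x}\|_{l^{2}}^{2}$. Its coordinates come from \propref{AB}: for each $y\in V$, $\left(A^{*}\delta_{x}\right)\left(y\right)=\langle\delta_{y},A^{*}\delta_{x}\rangle_{l^{2}}=\langle A\delta_{y},\delta_{x}\rangle_{\mathscr{H}}=\langle\delta_{y},\delta_{x}\rangle_{\mathscr{H}}$, so $\|A^{*}\delta_{x}\|_{l^{2}}^{2}=\sum_{y}|\langle\delta_{y},\delta_{x}\rangle_{\mathscr{H}}|^{2}$, which is (\ref{eq:mm3}). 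The covariance (\ref{eq:mm4}) is then immediate: because $\mu_{x}^{\left(A\right)}$ is a probability measure by the zeroth-moment computation, $cov\left(\mu_{x}^{\left(A\right)}\right)$ equals the second moment minus the square of the first, namely $\|A^{*}\delta_{x}\|_{l^{2}}^{2}-\|\delta_{x}\|_{\mathscr{H}}^{4}$.

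Finally, part (2) follows from the standard finiteness criterion of the spectral theorem combined with the identities just established: the first moment equals $\|\delta_{x}\|_{\mathscr{H}}^{2}$, hence is finite exactly when $\delta_{x}\in\mathscr{H}$, while the second moment is finite iff $\delta_{x}\in dom\left(A^{*}\overline{A}\right)$, which (given $\overline{A}\delta_{x}=\delta_{x}$) is the same as $\delta_{x}\in dom\left(A^{*}\right)$. I expect the main point requiring care to be the passage between functional-calculus $l^{2}$-norms and the $\mathscr{H}$-norms of $\overline{A}\delta_{x}$ and $A^{*}\delta_{x}$: one must verify that $\delta_{x}$ lies in the appropriate operator domains and that $V$ acts isometrically on the relevant range space, so that no equality above loses spectral mass on $\ker\left(\overline{A}\right)$.
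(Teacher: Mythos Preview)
Your proposal is correct and follows essentially the same route as the paper: spectral calculus from \remref{aa} for each moment, the polar factorization $\overline{A}=V(A^{*}\overline{A})^{1/2}$ from \corref{ab} to convert the $l^{2}$-norm of $(A^{*}\overline{A})^{1/2}\delta_{x}$ into $\|\overline{A}\delta_{x}\|_{\mathscr{H}}$, and the duality of \propref{AB} to identify the coordinates of $A^{*}\delta_{x}$. Your closing remark about verifying that $V$ is isometric on the relevant range (so no mass is lost on $\ker\overline{A}$) is exactly the care the paper glosses over when it writes $\|(A^{*}\overline{A})^{1/2}\delta_{x}\|_{l^{2}}^{2}=\|V(A^{*}\overline{A})^{1/2}\delta_{x}\|_{\mathscr{H}}^{2}$.
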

\begin{proof}
~

For (\ref{eq:mm1}), we have:
\[
\mu_{x}^{\left(A\right)}\left([0,\infty)\right)=\left\Vert P^{\left(A\right)}\left([0,\infty)\right)\delta_{x}\right\Vert _{l^{2}}^{2}=\left\Vert \delta_{x}\right\Vert _{l^{2}}^{2}=1.
\]

For (\ref{eq:mm2}), we have:
\begin{eqnarray*}
\int_{0}^{\infty}\lambda\,d\mu_{x}^{\left(A\right)}\left(\lambda\right) & \underset{\left(\text{by Cor. \ref{cor:ab}}\right)}{=} & \left\Vert \left(A^{*}\overline{A}\right)^{\frac{1}{2}}\delta_{x}\right\Vert _{l^{2}}^{2}\\
 & \underset{\left(\text{by \ensuremath{\left(\ref{eq:p1}\right)}}\right)}{=} & \left\Vert V\left(A^{*}\overline{A}\right)^{\frac{1}{2}}\delta_{x}\right\Vert _{\mathscr{H}}^{2}\\
 & \underset{\left(\text{by \ensuremath{\left(\ref{eq:p1}\right)}}\right)}{=} & \left\Vert \overline{A}\delta_{x}\right\Vert _{\mathscr{H}}^{2}\\
 & \underset{\left(\text{by Def. \ref{def:dp}}\right)}{=} & \left\Vert \delta_{x}\right\Vert _{\mathscr{H}}^{2}.
\end{eqnarray*}

For (\ref{eq:mm3}), if $\delta_{x}\in dom\left(A^{*}\right)$, then
$A^{*}\delta_{x}\in l^{2}$. Therefore
\[
A^{*}\delta_{x}=\sum_{y\in V}\left\langle \delta_{y},A^{*}\delta_{x}\right\rangle _{l^{2}}\delta_{y},\quad\left(l^{2}\mbox{-convergence}\right)
\]
and 
\[
\left\Vert A^{*}\delta_{x}\right\Vert _{l^{2}}^{2}=\sum_{y\in V}\left|\left\langle \delta_{y},A^{*}\delta_{x}\right\rangle _{l^{2}}\right|^{2}.
\]
But
\begin{equation}
\left\langle \delta_{y},A^{*}\delta_{x}\right\rangle _{l^{2}}=\left\langle A\delta_{y},\delta_{x}\right\rangle _{\mathscr{H}}=\left\langle \delta_{y},\delta_{x}\right\rangle _{\mathscr{H}}.\label{eq:mm5}
\end{equation}
Therefore, with the use of \remref{aa}, we arrive at the following:
\begin{eqnarray*}
\int_{0}^{\infty}\lambda^{2}d\mu_{x}^{\left(A\right)}\left(\lambda\right) & \underset{\left(\text{by Cor. \ref{cor:ab}}\right)}{=} & \left\Vert A^{*}\overline{A}\delta_{x}\right\Vert _{l^{2}}^{2}\\
 & \underset{\left(\text{by \ensuremath{\left(\ref{eq:mm5}\right)}}\right)}{=} & \sum_{y\in V}\left|\left\langle \delta_{y},\delta_{x}\right\rangle _{\mathscr{H}}\right|^{2}.
\end{eqnarray*}

The remaining conclusions in the lemma are now immediate from this.\end{proof}
\begin{cor}
If the equivalent conditions in \thmref{dd} are satisfied, then,
for every $x\in V$, there is a finite positive Borel measure $\mu_{x}$
on $[0,\infty)$ such that 
\begin{equation}
\sum_{y\in V}\left|\left\langle \delta_{y},\delta_{x}\right\rangle _{\mathscr{H}}\right|^{2}=\int_{0}^{\infty}\lambda^{2}\,d\mu_{x}\left(\lambda\right),\quad\forall x\in V.\label{eq:aa1}
\end{equation}
In general, the $\mathscr{H}$-norm of $\delta_{x}$ is finite iff
the first moment of $\mu_{x}$ is finite. \end{cor}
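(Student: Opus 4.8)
The plan is to let $\mu_x$ be the spectral measure $\mu_x^{(A)}$ already introduced in~(\ref{eq:a1}), namely $d\mu_x^{(A)}(\lambda) = \|P^{(A)}(d\lambda)\delta_x\|_{l^2}^2$, where $P^{(A)}$ is the projection-valued measure of the selfadjoint operator $A^*\overline{A}$ on $l^2(V)$ furnished by \corref{ab}. Because $P^{(A)}$ is a projection-valued measure on the Borel subsets of $[0,\infty)$, the set function $\Omega \mapsto \|P^{(A)}(\Omega)\delta_x\|_{l^2}^2$ is automatically a positive Borel measure; and by~(\ref{eq:mm1}) of \lemref{dsp} its total mass equals $\|\delta_x\|_{l^2}^2 = 1$, so $\mu_x$ is a (finite) probability measure. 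This settles existence, positivity, and finiteness of $\mu_x$ at once.

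Next I would obtain~(\ref{eq:aa1}) by direct appeal to the second-moment formula~(\ref{eq:mm3}) of \lemref{dsp}, which asserts $\int_0^\infty \lambda^2\, d\mu_x^{(A)}(\lambda) = \sum_{y\in V} |\langle \delta_y, \delta_x\rangle_{\mathscr{H}}|^2$. The standing hypothesis---that the equivalent conditions of \thmref{dd} hold---is exactly what guarantees $\langle \delta_x, \delta_\cdot\rangle_{\mathscr{H}} \in l^2$, so the right-hand side is finite and~(\ref{eq:aa1}) is a meaningful equality of finite quantities. Here the only bookkeeping point worth checking is that $\overline{A}\delta_x = \delta_x$ by \defref{dp}, whence $A^*\overline{A}\delta_x = A^*\delta_x$; this is what identifies the spectral second moment with $\|A^*\delta_x\|_{l^2}^2 = \sum_y |\langle \delta_y, \delta_x\rangle_{\mathscr{H}}|^2$.

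For the closing assertion I would read off the first-moment formula~(\ref{eq:mm2}) of \lemref{dsp}, namely $\int_0^\infty \lambda\, d\mu_x(\lambda) = \|\delta_x\|_{\mathscr{H}}^2$. Consequently the first moment of $\mu_x$ is finite precisely when $\|\delta_x\|_{\mathscr{H}} < \infty$, i.e.\ when $\delta_x \in \mathscr{H}$, which is the claimed equivalence. I do not expect a genuine obstacle: the corollary is a repackaging of \lemref{dsp}, and the substance of the argument has already been carried out there; the work here is limited to selecting the right measure and citing the relevant moment identities.
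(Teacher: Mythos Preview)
Your proposal is correct and follows essentially the same approach as the paper: both take $\mu_x$ to be the spectral measure $\mu_x^{(A)}(\cdot)=\|P^{(A)}(\cdot)\delta_x\|_{l^2}^2$ of the selfadjoint operator $A^*\overline{A}$, and both read off the moment identities. The paper's version re-derives the first-moment computation $\|\delta_x\|_{\mathscr{H}}^2=\int_0^\infty \lambda\,d\mu_x(\lambda)$ by hand rather than citing~(\ref{eq:mm2}), but this is cosmetic; your direct appeal to \lemref{dsp} is equivalent and arguably tidier.
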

\begin{proof}
Let the condition in the corollary hold. We then make use of the selfadjoint
operator $A^{*}\overline{A}$ from \corref{ab}. We conclude that
$\delta_{x}\in dom\left(A^{*}\overline{A}\right)$, $\forall x\in V$.
Let $P^{\left(A\right)}$ denote the projection valued measure obtained
from the selfadjoint operator $A^{*}\overline{A}$, i.e., 
\begin{equation}
A^{*}\overline{A}=\int_{0}^{\infty}\lambda\,P^{\left(A\right)}\left(\lambda\right)\label{eq:aa2}
\end{equation}
holds on the dense domain $dom\left(A^{*}\overline{A}\right)$; hence
if $\delta_{x}\in dom\left(A^{*}\overline{A}\right)^{\frac{1}{2}}$,
we get 
\[
\left\Vert \left(A^{*}\overline{A}\right)^{\frac{1}{2}}\delta_{x}\right\Vert _{l^{2}}^{2}=\left\langle \overline{A}\delta_{x},\overline{A}\delta_{x}\right\rangle _{\mathscr{H}}=\left\langle \delta_{x},\delta_{x}\right\rangle _{\mathscr{H}}=\left\Vert \delta_{x}\right\Vert _{\mathscr{H}}^{2}.
\]
Now set 
\begin{equation}
\mu_{x}\left(\cdot\right)=\left\Vert P\left(\cdot\right)\delta_{x}\right\Vert _{l^{2}}^{2},
\end{equation}
and substitute into (\ref{eq:aa2}). We get 
\begin{eqnarray*}
\left\Vert \delta_{x}\right\Vert _{\mathscr{H}}^{2} & = & \int_{0}^{\infty}\lambda\left\langle \delta_{x},P\left(d\lambda\right)\delta_{x}\right\rangle _{l^{2}}\\
 & = & \int_{0}^{\infty}\lambda\left\Vert P\left(d\lambda\right)\delta_{x}\right\Vert _{l^{2}}^{2}\\
 & = & \int_{0}^{\infty}\lambda\,d\mu_{x}\left(\lambda\right),
\end{eqnarray*}
which is the remaining conclusion.\end{proof}
\begin{cor}
\label{cor:mu}Let $k,V,\mathscr{H}$, and $P^{\left(A\right)}\left(\cdot\right)$
be as above; i.e., $P^{\left(A\right)}$ is the projection valued
measure of the selfadjoint operator $A^{*}\overline{A}$ in $l^{2}$.
For $x,y\in V$, set 
\begin{equation}
d\mu_{x,y}^{\left(A\right)}\left(\lambda\right)=\left\langle \delta_{x},P^{\left(A\right)}\left(d\lambda\right)\delta_{y}\right\rangle _{l^{2}}.\label{eq:am1}
\end{equation}
Then 
\begin{equation}
\left\langle \delta_{x},\delta_{y}\right\rangle _{\mathscr{H}}=\int_{0}^{\infty}\lambda\,d\mu_{x,y}^{\left(A\right)}\left(\lambda\right).\label{eq:am2}
\end{equation}
\end{cor}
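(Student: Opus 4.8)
The plan is to recognize Corollary \ref{cor:mu} as the full (polarized) sesquilinear version of the diagonal first-moment formula (\ref{eq:mm2}) in \lemref{dsp}, and to derive it using exactly the structural ingredients already assembled: the identity $\overline{A}\delta_x=\delta_x$ in $\mathscr{H}$ (\defref{dp}), the polar factorization $\overline{A}=V(A^{*}\overline{A})^{1/2}$ with $V^{*}V=I_{l^2}-\mathrm{Proj}\,\ker(\overline{A})$ from \corref{ab}, and the spectral/functional-calculus representation of $A^{*}\overline{A}$ recorded in \remref{aa}.

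First I would check that the integral in (\ref{eq:am2}) is meaningful and evaluate it in $l^2$. Because $\delta_x\in\mathscr{H}$ for every $x$, the first moment $\int_0^\infty\lambda\,d\mu_x^{(A)}=\|\delta_x\|_{\mathscr{H}}^2$ is finite by (\ref{eq:mm2}); by \remref{aa} (with $\psi(\lambda)=\sqrt{\lambda}$) this says precisely that $\delta_x\in\mathrm{dom}\,((A^{*}\overline{A})^{1/2})$, and likewise for $\delta_y$. Writing $\lambda=\sqrt{\lambda}\cdot\sqrt{\lambda}$ and distributing one factor to each side of the pairing, the standard spectral identity $\langle f(T)u,g(T)v\rangle=\int\overline{f}\,g\,d\langle u,P(\cdot)v\rangle$ (with $T=A^{*}\overline{A}$, $f=g=\sqrt{\cdot}$, so $\overline{f}g=\lambda$) gives
\[
\int_0^\infty \lambda\, d\mu_{x,y}^{(A)}(\lambda)=\big\langle (A^{*}\overline{A})^{1/2}\delta_x,\,(A^{*}\overline{A})^{1/2}\delta_y\big\rangle_{l^2},
\]
an absolutely convergent expression bounded by $\|\delta_x\|_{\mathscr{H}}\|\delta_y\|_{\mathscr{H}}$, precisely because both vectors lie in $\mathrm{dom}\,((A^{*}\overline{A})^{1/2})$.

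Next I would transport this $l^2$-inner product to $\mathscr{H}$ via the polar decomposition, mirroring the proof of (\ref{eq:mm2}). Using $\overline{A}=V(A^{*}\overline{A})^{1/2}$ yields $\langle V(A^{*}\overline{A})^{1/2}\delta_x,V(A^{*}\overline{A})^{1/2}\delta_y\rangle_{\mathscr{H}}=\langle\overline{A}\delta_x,\overline{A}\delta_y\rangle_{\mathscr{H}}=\langle\delta_x,\delta_y\rangle_{\mathscr{H}}$, the last step because $\overline{A}\delta_z=\delta_z$. The one point needing care — and the only genuine obstacle — is the passage from the $l^2$ pairing to the $\mathscr{H}$ pairing, i.e.\ that $V$ preserves the inner product of $(A^{*}\overline{A})^{1/2}\delta_x$ and $(A^{*}\overline{A})^{1/2}\delta_y$. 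This is exactly where $V^{*}V=I_{l^2}-\mathrm{Proj}\,\ker(\overline{A})$ is used: since $\ker\big((A^{*}\overline{A})^{1/2}\big)=\ker(A^{*}\overline{A})=\ker(\overline{A})$, both vectors lie in $\overline{\mathrm{range}}\,((A^{*}\overline{A})^{1/2})=(\ker\overline{A})^{\perp}$, the initial space of the partial isometry $V$, so that $\langle V\phi,V\psi\rangle_{\mathscr{H}}=\langle\phi,V^{*}V\psi\rangle_{l^2}=\langle\phi,\psi\rangle_{l^2}$. Chaining the two displayed identities gives (\ref{eq:am2}).

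Alternatively, and perhaps more transparently, one may argue by polarization. The diagonal identity (\ref{eq:mm2}) is in fact the statement $\int_0^\infty\lambda\,d\|P^{(A)}(\lambda)\xi\|_{l^2}^2=\|\overline{A}\xi\|_{\mathscr{H}}^2=\|\xi\|_{\mathscr{H}}^2$ for every $\xi\in\mathrm{span}\{\delta_x\}$, not merely for $\xi=\delta_x$, since $\overline{A}$ acts as the identity on point masses. Applying this quadratic identity to $\xi=\delta_x+i^{k}\delta_y$ for $k=0,1,2,3$ and recombining with the polarization weights recovers (\ref{eq:am2}) directly, the cross terms $\langle\delta_x,P^{(A)}(\cdot)\delta_y\rangle_{l^2}$ assembling into $\mu_{x,y}^{(A)}$, with no further appeal to the polar decomposition. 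Either route is routine once the membership $\delta_x\in\mathrm{dom}\,((A^{*}\overline{A})^{1/2})$ is in hand, so the real content is entirely carried by (\ref{eq:mm2}) together with \corref{ab}.
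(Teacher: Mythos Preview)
Your proof is correct. The paper's own argument is a two-line computation: it writes $\int_0^\infty \lambda\,d\mu_{x,y}^{(A)}(\lambda) = \langle\delta_x, A^*\overline{A}\delta_y\rangle_{l^2} = \langle\delta_x,\delta_y\rangle_{\mathscr{H}}$, invoking \remref{aa} for the first equality and the adjoint relation $\langle\delta_x,A^*h\rangle_{l^2}=\langle A\delta_x,h\rangle_{\mathscr{H}}$ together with $\overline{A}\delta_y=\delta_y$ for the second. This is shorter but, read literally, requires $\delta_y\in\mathrm{dom}(A^*\overline{A})$ --- i.e.\ the extra $l^2$-row condition of \thmref{dd} --- which the paper appears to be carrying over implicitly from the preceding corollary. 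Your route, splitting $\lambda=\sqrt{\lambda}\cdot\sqrt{\lambda}$, placing one factor of $(A^*\overline{A})^{1/2}$ on each side, and then transporting to $\mathscr{H}$ via the polar isometry, needs only $\delta_x,\delta_y\in\mathrm{dom}\big((A^*\overline{A})^{1/2}\big)$, which is exactly the standing hypothesis $\delta_x\in\mathscr{H}$. So your argument is slightly more general and makes explicit the domain bookkeeping the paper glosses over; the polarization alternative you sketch is equally valid and is perhaps the cleanest way to see that nothing beyond the diagonal case (\ref{eq:mm2}) is really needed.
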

\begin{proof}
By \lemref{dsp} and \remref{aa}, we have
\[
\int_{0}^{\infty}\lambda\,d\mu_{x,y}^{\left(A\right)}\left(\lambda\right)=\left\langle \delta_{x},A^{*}\overline{A}\delta_{y}\right\rangle _{l^{2}}=\left\langle \delta_{x},\delta_{y}\right\rangle _{\mathscr{H}.}
\]

\end{proof}
Let $\left(k,V,\mathscr{H}\right)$ be as in \thmref{BB} and \corref{mu};
and let $\left(A,B\right)$ be the associated symmetric pair. Let
$B^{*}\overline{B}$ be the selfadjoint operator in $\mathscr{H}$,
introduced in \corref{ab}; and let $P^{\left(B\right)}\left(\cdot\right)$
be the corresponding projection valued measure; i.e., $P^{\left(B\right)}\left(S\right)$
is a projection in $\mathscr{H}$, $\forall S\in\mathscr{B}_{+}$
= the Borel $\sigma$-algebra of subsets of $[0,\infty)$. In particular,
\begin{equation}
B^{*}\overline{B}=\int_{0}^{\infty}\lambda\,P^{\left(B\right)}\left(d\lambda\right)\label{eq:bn1}
\end{equation}
holds on the dense domain $dom\left(B^{*}\overline{B}\right)$ in
$\mathscr{H}$. 
\begin{prop}
For $x\in V$, set 
\begin{equation}
d\mu_{x}^{\left(B\right)}\left(\lambda\right)=\left\Vert P^{\left(B\right)}\left(d\lambda\right)k_{x}\right\Vert _{\mathscr{H}}^{2}.\label{eq:bn2}
\end{equation}
Then for the moments of order $0,1$, and $2$, we have:
\begin{align}
 & \mu_{x}^{\left(B\right)}\left([0,\infty)\right)=k\left(x,x\right);\label{eq:bn3}\\
 & \int_{0}^{\infty}\lambda\,d\mu_{x}^{\left(B\right)}\left(\lambda\right)=1,\quad\mbox{and}\label{eq:bn4}\\
 & \int_{0}^{\infty}\lambda^{2}\,d\mu_{x}^{\left(B\right)}\left(\lambda\right)=\left\Vert \delta_{x}\right\Vert _{\mathscr{H}}^{2}.\label{eq:bn5}
\end{align}
\end{prop}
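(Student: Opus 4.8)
The plan is to run the exact analogue of the computation in \lemref{dsp}, transporting the roles of the operators $A$ and $B$ and of the vectors $\delta_x$ and $k_x$. The inputs I will use are: the spectral representation of the selfadjoint operator $B^{*}\overline{B}$ through its projection-valued measure $P^{\left(B\right)}$, i.e.\ the functional-calculus formula of \remref{aa} read now for $B^{*}\overline{B}$ in place of $A^{*}\overline{A}$; the polar decomposition (\ref{eq:p2}); the structural identity $\overline{B}\,k_x=B\,k_x=\delta_x$ in $l^{2}$ (from \defref{dp}); the inclusion $A\subset B^{*}$ of \corref{ab}; and finally \thmref{BB}, which guarantees $k_x\in dom\left(B^{*}\overline{B}\right)$ and hence legitimizes the spectral integrals against $\lambda$ and $\lambda^{2}$. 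With these in hand each moment reduces to a one-line norm computation, so the work is essentially bookkeeping of domains.

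For the zeroth moment (\ref{eq:bn3}), since $B^{*}\overline{B}\ge 0$ its spectrum lies in $[0,\infty)$, so $P^{\left(B\right)}\!\left([0,\infty)\right)=I_{\mathscr{H}}$ and therefore $\mu_x^{\left(B\right)}\!\left([0,\infty)\right)=\bigl\|P^{\left(B\right)}\!\left([0,\infty)\right)k_x\bigr\|_{\mathscr{H}}^{2}=\left\Vert k_x\right\Vert_{\mathscr{H}}^{2}=k\left(x,x\right)$, the last equality being the normalization $\left\Vert k_x\right\Vert_{\mathscr{H}}^{2}=k\left(x,x\right)$ recorded in \propref{AB}. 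For the first moment (\ref{eq:bn4}), the spectral integral gives $\int_{0}^{\infty}\lambda\,d\mu_x^{\left(B\right)}\left(\lambda\right)=\bigl\langle k_x,B^{*}\overline{B}\,k_x\bigr\rangle_{\mathscr{H}}$; since $k_x\in dom\left(\overline{B}\right)$ and $\overline{B}\,k_x\in dom\left(B^{*}\right)$ (by \thmref{BB}), the defining property of the adjoint turns this into $\bigl\Vert\overline{B}\,k_x\bigr\Vert_{l^{2}}^{2}=\left\Vert\delta_x\right\Vert_{l^{2}}^{2}=1$; equivalently one may route this through the partial isometry $W$ of (\ref{eq:p2}), writing $\bigl\Vert(B^{*}\overline{B})^{1/2}k_x\bigr\Vert_{\mathscr{H}}^{2}=\bigl\Vert W(B^{*}\overline{B})^{1/2}k_x\bigr\Vert_{l^{2}}^{2}=\bigl\Vert\overline{B}\,k_x\bigr\Vert_{l^{2}}^{2}$.

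For the second moment (\ref{eq:bn5}), \thmref{BB} places $k_x\in dom\left(B^{*}\overline{B}\right)$, so the spectral theorem yields $\int_{0}^{\infty}\lambda^{2}\,d\mu_x^{\left(B\right)}\left(\lambda\right)=\bigl\Vert B^{*}\overline{B}\,k_x\bigr\Vert_{\mathscr{H}}^{2}$. I then evaluate $B^{*}\overline{B}\,k_x=B^{*}\delta_x$, and invoke $A\subset B^{*}$ from \corref{ab}: since $\delta_x\in dom\left(A\right)=span\left\{\delta_y\right\}$ with $A\delta_x=\delta_x\in\mathscr{H}$, the extension gives $B^{*}\delta_x=A\delta_x=\delta_x$ as an element of $\mathscr{H}$, whence the integral equals $\left\Vert\delta_x\right\Vert_{\mathscr{H}}^{2}$. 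The one genuinely delicate point — the main obstacle — is precisely this last step: one must be careful that the unbounded-operator manipulations are licit, i.e.\ that $k_x$ really lies in $dom\left(B^{*}\overline{B}\right)$ (this is exactly \thmref{BB}) and that applying $B^{*}$ to $\overline{B}\,k_x=\delta_x$ returns $\delta_x$ in $\mathscr{H}$ through the inclusion $A\subset B^{*}$; everything else is routine spectral calculus.
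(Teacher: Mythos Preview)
Your proof is correct and follows essentially the same route as the paper: both arguments compute the three moments via the spectral representation of $B^{*}\overline{B}$, use $\overline{B}\,k_{x}=\delta_{x}$ together with the polar decomposition (\ref{eq:p2}) for the first moment, and invoke $A\subset B^{*}$ to identify $B^{*}\delta_{x}=A\delta_{x}=\delta_{x}\in\mathscr{H}$ for the second. Your write-up is in fact a bit more explicit about the domain bookkeeping (invoking \thmref{BB} and \corref{ab} at the relevant places), but the substance is identical.
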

\begin{proof}
We have
\[
\mbox{LHS}_{\left(\ref{eq:bn3}\right)}=\left\Vert k_{x}\right\Vert _{\mathscr{H}}^{2}=k\left(x,x\right).
\]
\begin{eqnarray*}
\mbox{LHS}_{\left(\ref{eq:bn4}\right)} & = & \left\Vert \left(B^{*}\overline{B}\right)^{\frac{1}{2}}k_{x}\right\Vert _{\mathscr{H}}^{2}\\
 & = & \left\Vert W\left(B^{*}\overline{B}\right)^{\frac{1}{2}}k_{x}\right\Vert _{l^{2}}^{2}\quad\left(\mbox{see \ensuremath{\left(\ref{eq:p2}\right)} in Cor. \ref{cor:ab}}\right)\\
 & \underset{\left(\text{by \ensuremath{\left(\ref{eq:p2}\right)}}\right)}{=} & \left\Vert \overline{B}\,k_{x}\right\Vert _{l^{2}}^{2}\\
 & \underset{\left(\text{by \ensuremath{\left(\ref{eq:B1}\right)}}\right)}{=} & \left\Vert \delta_{x}\right\Vert _{l^{2}}^{2}=1.
\end{eqnarray*}

Similarly, 
\begin{eqnarray*}
\mbox{LHS}_{\left(\ref{eq:bn5}\right)} & = & \left\Vert B^{*}\overline{B}\,k_{x}\right\Vert _{\mathscr{H}}^{2}\\
 & \underset{\left(\text{by \ensuremath{\left(\ref{eq:B1}\right)}}\right)}{=} & \left\Vert B^{*}\delta_{x}\right\Vert _{\mathscr{H}}^{2}=\left\Vert A\delta_{x}\right\Vert _{\mathscr{H}}^{2}=\left\Vert \delta_{x}\right\Vert _{\mathscr{H}}^{2}.
\end{eqnarray*}

We have proved the three moments formulas.
\end{proof}

\subsection{Application:\emph{ }Moment analysis of networks with given conductance
function}

Consider a fixed infinite network as specified in Definitions \ref{def:nt}-\ref{def:eh}.
Recall that $c:E\longrightarrow\mathbb{R}_{+}$ is a fixed conductance
function, i.e., $c_{xy}=c_{yx}$, and defined for $\forall\left(xy\right)\in E$.
We write $x\sim y$ iff (Def.) $\left(xy\right)\in E$. Set 
\begin{equation}
c\left(x\right)=\sum_{y\sim x}c_{xy}.\label{eq:an1}
\end{equation}
The sum in (\ref{eq:an1}) may be finite, or infinite. Let $x\in V\backslash\left\{ o\right\} $,
where ``$o$'' is the chosen base-point in the vertex set $V$.
\begin{thm}
\label{thm:mu}Given $\left(V,E,c\right)$ connected, and let $x\in V\backslash\left\{ o\right\} $.
Set $\mu_{x}^{\left(A\right)}\left(\cdot\right)=\left\Vert P^{\left(A\right)}\left(\cdot\right)\delta_{x}\right\Vert _{l^{2}}^{2}$
(see (\ref{eq:a1})). 
\begin{enumerate}
\item \label{enu:u1}We have
\begin{equation}
\delta_{x}\in\mathscr{H}_{E}\Longleftrightarrow c\left(x\right)<\infty,\label{eq:an2}
\end{equation}
and in this case
\begin{equation}
\int_{0}^{\infty}\lambda\,d\mu_{x}^{\left(A\right)}\left(\lambda\right)=\left\Vert \delta_{x}\right\Vert _{\mathscr{H}_{E}}^{2}=c\left(x\right).\label{eq:an3}
\end{equation}

\item \label{enu:u2}Assume (\ref{eq:an2}) holds for all $x\in V\backslash\left\{ o\right\} $,
then 
\begin{equation}
\int_{0}^{\infty}\lambda^{2}d\mu_{x}^{\left(A\right)}\left(\lambda\right)=\left(c\left(x\right)^{2}+\sum\nolimits _{y\sim x}c_{xy}^{2}\right).\label{eq:an4}
\end{equation}

\item \label{enu:u3}For the covariance of $\mu_{x}^{\left(A\right)}$,
we have:
\begin{equation}
cov\left(\mu_{x}^{\left(A\right)}\right)=\sum_{y\sim x}c_{xy}^{2},\label{eq:an5}
\end{equation}
and 
\begin{equation}
cov\left(\mu_{x}^{\left(A\right)}\right)\leq\left\Vert \delta_{x}\right\Vert _{\mathscr{H}_{E}}^{4}.\label{eq:an6}
\end{equation}

\end{enumerate}
\end{thm}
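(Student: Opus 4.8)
The plan is to reduce all three parts to the moment formulas already recorded in \lemref{dsp}, so that the entire content of the theorem becomes an explicit evaluation of the energy inner products $\langle\delta_y,\delta_x\rangle_{\mathscr{H}_E}$ in terms of the conductance function $c$. Indeed, \lemref{dsp} expresses the first moment of $\mu_x^{(A)}$ as $\|\delta_x\|_{\mathscr{H}_E}^2$ (eq.~(\ref{eq:mm2})), the second moment as $\sum_{y\in V}|\langle\delta_y,\delta_x\rangle_{\mathscr{H}_E}|^2$ (eq.~(\ref{eq:mm3})), and the covariance as the combination (\ref{eq:mm4}); thus once these inner products are known, (\ref{eq:an3}), (\ref{eq:an4}) and (\ref{eq:an5}) follow by substitution.

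First I would establish the key identity
\[
\langle\delta_x,f\rangle_{\mathscr{H}_E}=(\Delta_c f)(x)=\sum_{y\sim x}c_{xy}\bigl(f(x)-f(y)\bigr),\qquad\forall f\in\mathscr{H}_E,
\]
valid whenever $\delta_x\in\mathscr{H}_E$. This comes straight from \defref{eh}: in $\tfrac12\sum_{(uv)\in E}c_{uv}(\delta_x(u)-\delta_x(v))(f(u)-f(v))$ the only surviving edges are those incident to $x$, and the two orientations of each such edge contribute equally, cancelling the factor $\tfrac12$. Specializing $f=\delta_x$ gives $\|\delta_x\|_{\mathscr{H}_E}^2=(\Delta_c\delta_x)(x)=c(x)$; and evaluating $\Delta_c\delta_x$ at a general vertex $y$ (i.e.\ using the identity with $\delta_y$ in the first slot and $f=\delta_x$) yields the three-case formula
\[
\langle\delta_y,\delta_x\rangle_{\mathscr{H}_E}=\begin{cases}c(x)&y=x,\\-c_{xy}&y\sim x,\\0&\text{otherwise.}\end{cases}
\]

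With these in hand the three parts fall out. For (\ref{enu:u1}), $\|\delta_x\|_{\mathscr{H}_E}^2=\tfrac12\sum_{(uv)\in E}c_{uv}|\delta_x(u)-\delta_x(v)|^2=c(x)$ directly from the norm in \defref{eh}, so $\delta_x\in\mathscr{H}_E\Leftrightarrow c(x)<\infty$, and (\ref{eq:an3}) is then (\ref{eq:mm2}). For (\ref{enu:u2}) I would first note that $\sum_y|\langle\delta_y,\delta_x\rangle_{\mathscr{H}_E}|^2=c(x)^2+\sum_{y\sim x}c_{xy}^2$ is finite (using $\sum_{y\sim x}c_{xy}^2\le c(x)^2<\infty$), hence $\delta_x\in\mathrm{dom}(A^*)$ and (\ref{eq:mm3}) applies, giving (\ref{eq:an4}). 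For (\ref{enu:u3}), substituting into (\ref{eq:mm4}) the cancellation $\bigl(c(x)^2+\sum_{y\sim x}c_{xy}^2\bigr)-c(x)^2$ gives (\ref{eq:an5}); and (\ref{eq:an6}) follows since expanding $c(x)^2=\bigl(\sum_{y\sim x}c_{xy}\bigr)^2$ and discarding the nonnegative cross terms $c_{xy}c_{xz}$ (with $y\ne z$; here the positivity $c:E\to\mathbb{R}_+$ is essential) yields $\sum_{y\sim x}c_{xy}^2\le c(x)^2=\|\delta_x\|_{\mathscr{H}_E}^4$.

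The one point requiring genuine care is the justification of the key identity when $x$ has infinitely many neighbors: $c(x)<\infty$ does not by itself bound the number of edges at $x$. I expect the main obstacle to be confirming absolute convergence and the termwise manipulation of $\tfrac12\sum_{(uv)\in E}c_{uv}(\cdots)$, which I would settle by the Cauchy--Schwarz estimate $\sum_{y\sim x}c_{xy}|f(x)-f(y)|\le c(x)^{1/2}\bigl(\sum_{y\sim x}c_{xy}|f(x)-f(y)|^2\bigr)^{1/2}\le\sqrt{2}\,c(x)^{1/2}\|f\|_{\mathscr{H}_E}$; this simultaneously shows $\delta_x\in\mathscr{H}_E$ and validates the identity, so that \propref{glap} is not needed and the finite-degree hypothesis (\ref{eq:fn}) can be avoided.
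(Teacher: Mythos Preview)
Your proposal is correct and follows essentially the same route as the paper: both arguments reduce everything to the three-case formula $\langle\delta_y,\delta_x\rangle_{\mathscr{H}_E}=c(x),\,-c_{xy},\,0$ (the paper's eq.~(\ref{eq:an7})) and then invoke the moment identities (\ref{eq:mm2})--(\ref{eq:mm4}) from \lemref{dsp}, finishing (\ref{eq:an6}) with the same $\sum c_{xy}^2\le(\sum c_{xy})^2$ step. The only noteworthy difference is that the paper cites \lemref{dipp} and \propref{glap} to obtain (\ref{eq:an7}), whereas you derive it directly from the energy inner product and add the Cauchy--Schwarz justification for vertices of infinite degree; since \propref{glap} is stated under the finite-degree hypothesis (\ref{eq:fn}) while \thmref{mu} is not, your extra care here is a genuine (if small) improvement.
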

\begin{proof}
With the use of \lemref{dipp} and \propref{glap}, we get the following
formulas for the $\mathscr{H}_{E}$-inner product, see also \defref{eh}:
\begin{equation}
\left\langle \delta_{x},\delta_{y}\right\rangle _{\mathscr{H}_{E}}=\begin{cases}
c\left(x\right) & \mbox{if }y=x\\
-c_{xy} & \mbox{if }y\sim x\\
0 & \mbox{if }y\neq x\;\mbox{and }\left(xy\right)\notin E.
\end{cases}\label{eq:an7}
\end{equation}

The conclusions (\ref{eq:an2}) and (\ref{eq:an3}) are immediate
from this since $\left\Vert \delta_{x}\right\Vert _{\mathscr{H}_{E}}^{2}=c\left(x\right)$
follows from (\ref{eq:an7}).

Conclusion (\ref{eq:an4}) in the theorem follows from (\ref{eq:mm3})
in \lemref{dsp}, and (\ref{eq:an7}) above. Indeed, 
\begin{eqnarray*}
\int_{0}^{\infty}\lambda^{2}d\mu_{x}^{\left(A\right)}\left(\lambda\right) & = & \left\Vert A^{*}\delta_{x}\right\Vert _{l^{2}}^{2}\\
 & = & \sum_{y\in V}\left|\left\langle \delta_{y},\delta_{x}\right\rangle _{\mathscr{H}_{E}}\right|^{2}\\
 & = & c\left(x\right)^{2}+\sum_{y\sim x}c_{xy}^{2}\quad\left(\text{by }\left(\ref{eq:an7}\right)\right)
\end{eqnarray*}
which is the desired conclusion.

For the covariance, we have:
\begin{eqnarray*}
\int_{0}^{\infty}\left|\lambda-c\left(x\right)\right|^{2}d\mu_{x}^{\left(A\right)}\left(\lambda\right) & = & \int_{0}^{\infty}\lambda^{2}d\mu_{x}^{\left(A\right)}\left(\lambda\right)-c\left(x\right)^{2}\\
 & = & \sum_{y\sim x}c_{xy}^{2};\quad\left(\text{by }\left(\ref{eq:an4}\right)\right)
\end{eqnarray*}
thus completing the proof of conclusions (\ref{enu:u1})-(\ref{enu:u2})
in the statement of the Theorem.

Part (\ref{enu:u3}). Since 
\[
\sum_{y\sim x}c_{xy}^{2}\leq\left(\sum_{y\sim x}c_{xy}\right)^{2}=c\left(x\right)^{2}
\]
we get estimate (\ref{eq:an6}) in part (\ref{enu:u3}) from the Theorem.
\end{proof}

\subsection{Discrete sample points for Brownian motion}

We interrupt the general considerations with an example for illustration,
choices of discrete sample points for standard Brownian motion. 
\begin{example}
\label{exa:ddbm}Consider $V:\;0<x_{1}<x_{2}<\cdots<x_{i}<x_{i+1}<\cdots$,
a discrete subset of $\mathbb{R}_{+}$, and set 
\[
k\left(s,t\right)=s\wedge t=\min\left(s,t\right),\quad\forall s,t\in V.
\]
Note that $k$ is the covariance kernel (positive definite) of standard
Brownian motion, restricted to the set $V$. Let $\mathscr{H}\left(=\mathscr{H}\left(k\right)\right)$
be the associated RKHS. (See sect. \ref{sub:bm} for details.)

For each finite subset $F_{n}=\left\{ x_{1},x_{2},\ldots,x_{n}\right\} $
of $V$, we have 
\[
K_{n}=K^{\left(F_{n}\right)}=\begin{bmatrix}x_{1} & x_{1} & x_{1} & \cdots & x_{1}\\
x_{1} & x_{2} & x_{2} & \cdots & x_{2}\\
x_{1} & x_{2} & x_{3} & \cdots & x_{3}\\
\vdots & \vdots & \vdots & \vdots & \vdots\\
x_{1} & x_{2} & x_{3} & \cdots & x_{n}
\end{bmatrix}=\left(x_{i}\wedge x_{j}\right)_{i,j=1}^{n}.
\]
A direct calculation shows that
\[
\left(K_{n}^{-1}\right)=\left[\begin{array}{ccccc}
-\frac{x_{2}}{x_{1}^{2}-x_{1}x_{2}} & \frac{1}{x_{1}-x_{2}} & 0 & 0 & 0\\
\frac{1}{x_{1}-x_{2}} & \frac{x_{3}-x_{1}}{\left(x_{1}-x_{2}\right)\left(x_{2}-x_{3}\right)} & \frac{1}{x_{2}-x_{3}} & 0 & 0\\
0 & \ddots & \ddots & \ddots & 0\\
0 & 0 & \frac{1}{x_{n-1}-x_{n}} & \frac{x_{n+1}-x_{n-1}}{\left(x_{n-1}-x_{n}\right)\left(x_{n}-x_{n+1}\right)} & \frac{1}{x_{n}-x_{n+1}}
\end{array}\right].
\]
It follows that 
\[
\left[x_{j}\longrightarrow\left\langle \delta_{x_{i}},\delta_{x_{j}}\right\rangle \right]\in l^{2},\quad\forall x_{i}\in V.
\]
See \lemref{dd}.
\end{example}

\section{\label{sec:mspa}Spectral theory: A necessary and sufficient condition
for when the symmetric pair is maximal}

We showed in \secref{SPA} that, to every reproducing kernel Hilbert
space $\mathscr{H}$ having a countable discrete set of sample points
of finite $\mathscr{H}$-norm, there is a canonically associated symmetric
pair of operators $\left(A,B\right)$. In the present section we give
a practical necessary and sufficient condition for this symmetric
pair to be maximal.
\begin{thm}
Let $V,k,\mathscr{H}$ be as above, and assume $\delta_{x}\in\mathscr{H}$,
$\forall x\in V$. Let $(A,B)$ be the associated symmetric pair of
operators from \defref{dp} and \corref{ab}. Then TFAE:
\begin{enumerate}
\item \label{enu:ab1}$\overline{A}=B^{*}$ and $\overline{B}=A^{*}$;
\item \label{enu:ab2}The following implication holds:
\[
\left[h\in dom\left(A^{*}\right),\;\left(A^{*}h\right)\left(x\right)=-h\left(x\right),\;\forall x\in V\right]\Longrightarrow h=0.
\]

\end{enumerate}
\end{thm}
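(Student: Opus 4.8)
The plan is to collapse condition (\ref{enu:ab1}) to the single identity $\overline{A}=B^{*}$ and then to detect its failure by a graph-orthogonality computation whose defect space is exactly the solution space of $A^{*}h=-h$. First I would make the four operators $\overline{A},A^{*},B,B^{*}$ completely explicit as functions on $V$. From $A\delta_{x}=\delta_{x}$ and the reproducing property, every $h\in dom\left(A^{*}\right)$ satisfies $\left(A^{*}h\right)\left(x\right)=\left\langle A\delta_{x},h\right\rangle _{\mathscr{H}}=\left\langle \delta_{x},h\right\rangle _{\mathscr{H}}$; and from $Bk_{y}=\delta_{y}$, every $\phi\in dom\left(B^{*}\right)$ satisfies $\left(B^{*}\phi\right)\left(y\right)=\left\langle B^{*}\phi,k_{y}\right\rangle _{\mathscr{H}}=\left\langle \phi,\delta_{y}\right\rangle _{l^{2}}=\phi\left(y\right)$ (in the real case, to which I reduce exactly as in \propref{AB}). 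Thus $B^{*}$ is nothing but the inclusion ``read an $l^{2}$-function as an element of $\mathscr{H}$'', and $\overline{A}$ is this same inclusion on a smaller domain.

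Next I would reduce (\ref{enu:ab1}) to one equation. By \corref{ab} we have $A\subseteq B^{*}$ and $B\subseteq A^{*}$, and since $B^{*},A^{*}$ are closed this upgrades to $\overline{A}\subseteq B^{*}$ and $\overline{B}\subseteq A^{*}$. Taking adjoints and using $\left(\overline{A}\right)^{*}=A^{*}$ and $\left(B^{*}\right)^{*}=\overline{B}$ (both valid since all the domains in sight are dense), the identity $\overline{A}=B^{*}$ is seen to be equivalent to $\overline{B}=A^{*}$. Hence (\ref{enu:ab1}) holds if and only if $\overline{A}=B^{*}$.

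The heart of the argument is then a graph computation in $l^{2}\oplus\mathscr{H}$. Because $\overline{A}\subseteq B^{*}$ with both operators closed, $G\left(\overline{A}\right)$ is a closed subspace of $G\left(B^{*}\right)$, so $\overline{A}=B^{*}$ iff $G\left(B^{*}\right)\ominus G\left(\overline{A}\right)=G\left(B^{*}\right)\cap G\left(\overline{A}\right)^{\perp}=\left\{ 0\right\} $. The standard formula for the orthogonal complement of a graph gives $G\left(\overline{A}\right)^{\perp}=\left\{ \left(-A^{*}\zeta,\zeta\right)\mid\zeta\in dom\left(A^{*}\right)\right\} $. A pair $\left(\phi,B^{*}\phi\right)$ lies in this set iff $\phi=-A^{*}\zeta$ and $B^{*}\phi=\zeta$ for some $\zeta\in dom\left(A^{*}\right)$; substituting, and using that $B^{*}$ is the inclusion from Step 1 so that $B^{*}\phi=\phi$ as a function, the two requirements collapse to $A^{*}\zeta=-\zeta$. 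Therefore $G\left(B^{*}\right)\cap G\left(\overline{A}\right)^{\perp}$ is linearly isomorphic to $\left\{ \zeta\in dom\left(A^{*}\right)\mid A^{*}\zeta=-\zeta\right\} $, which is trivial precisely when (\ref{enu:ab2}) holds; chaining the equivalences gives (\ref{enu:ab1})$\Leftrightarrow$(\ref{enu:ab2}).

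I expect the main obstacle to be the identification in Step 1 that $B^{*}$ acts as the identity on functions, and the careful bookkeeping of the two function-space structures $l^{2}$ versus $\mathscr{H}$ that it entails: it is exactly this identity that converts the ``$B^{*}A^{*}\zeta=-\zeta$'' produced by the abstract graph computation into the clean eigenvalue equation $A^{*}\zeta=-\zeta$ of (\ref{enu:ab2}), and one must verify that the side condition $A^{*}\zeta\in dom\left(B^{*}\right)$ is automatic (it is, since $A^{*}\zeta=-\zeta$ already represents an element of $\mathscr{H}$). A secondary, routine point is the passage to the complex case, where $\left(B^{*}\phi\right)\left(y\right)=\overline{\phi\left(y\right)}$; as in \propref{AB} the needed modifications are cosmetic and leave the conclusion intact.
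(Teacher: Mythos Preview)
Your argument is correct and rests on the same graph-orthogonality idea as the paper's proof, but the execution differs in a way worth noting. The paper works on the other half of (\ref{enu:ab1}), testing $\overline{B}=A^{*}$ directly: since $\{(k_{x},\delta_{x}):x\in V\}$ spans a dense subspace of $\mathscr{G}(\overline{B})\subset\mathscr{H}\oplus l^{2}$, one simply checks when a vector $(h,A^{*}h)\in\mathscr{G}(A^{*})$ is orthogonal to every $(k_{x},\delta_{x})$, which immediately reads $h(x)+(A^{*}h)(x)=0$. This bypasses entirely the explicit identification of $B^{*}$ as the inclusion map and the abstract formula $G(\overline{A})^{\perp}=\{(-A^{*}\zeta,\zeta)\}$ that you invoke. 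Your route is a bit longer but has the merit of making the role of $B^{*}$ (and hence of \lemref{hf} in characterizing $dom(B^{*})$) completely transparent; the paper's route is shorter because the generators of $\mathscr{G}(B)$ are already at hand.

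One small slip: in your closing remark on the complex case you write $(B^{*}\phi)(y)=\overline{\phi(y)}$, but with the paper's physicists' convention (conjugate-linear in the first slot) one still gets $(B^{*}\phi)(y)=\langle Bk_{y},\phi\rangle_{l^{2}}=\langle\delta_{y},\phi\rangle_{l^{2}}=\phi(y)$, so no conjugation appears and the real-case computation goes through verbatim.
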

\begin{proof}
It is enough to consider one of the two conditions in (\ref{enu:ab1}).
We note that $\overline{B}=A^{*}$ $\Longleftrightarrow$ 
\begin{equation}
\mathscr{G}\left(A^{*}\right)\ominus\mathscr{G}\left(B\right)=0.\label{eq:ab1}
\end{equation}
But 
\begin{gather}
\begin{pmatrix}h\\
A^{*}h
\end{pmatrix}\in\mathscr{G}\left(A^{*}\right)\ominus\mathscr{G}\left(B\right)\nonumber \\
\Updownarrow\nonumber \\
\left\langle \begin{pmatrix}k_{x}\\
\delta_{x}
\end{pmatrix},\begin{pmatrix}h\\
A^{*}h
\end{pmatrix}\right\rangle _{\oplus}=0,\quad\forall x\in V,\label{eq:ab2}
\end{gather}
where $\left\langle \cdot,\cdot\right\rangle _{\oplus}$ is the inner
product in $\begin{matrix}\underset{\oplus}{\mathscr{H}}\\
l^{2}
\end{matrix}$. Now (\ref{eq:ab2}) $\Longleftrightarrow$ 
\begin{gather*}
\left\langle k_{x},h\right\rangle _{\mathscr{H}}+\left\langle \delta_{x},A^{*}h\right\rangle _{l^{2}}=0,\quad\forall x\in V\\
\Updownarrow\\
h\left(x\right)+\left(A^{*}h\right)\left(x\right)\equiv0,\quad\forall x\in V.
\end{gather*}
The desired conclusion (\ref{enu:ab1})$\Longleftrightarrow$ (\ref{enu:ab2})
is now immediate.\end{proof}
\begin{cor}
Let $V,k,\mathscr{H}$ be as above, and assume $\delta_{x}\in\mathscr{H}$,
$\forall x\in V$. Let $\left(A,B\right)$ be the associated dual
pair of operators, with $P^{\left(A\right)}$ and $P^{\left(B\right)}$
the respective projection valued measures. Set $\mu_{x}^{\left(A\right)}$
and $\mu_{x}^{\left(B\right)}$ as in (\ref{eq:a1}) and (\ref{eq:bn2}).
Then we have
\begin{equation}
\int_{0}^{\infty}\lambda\,d\mu_{x}^{\left(A\right)}\left(\lambda\right)=\int_{0}^{\infty}\lambda^{2}\,d\mu_{x}^{\left(B\right)}\left(\lambda\right)\left(=\left\Vert \delta_{x}\right\Vert _{\mathscr{H}}^{2}\right).\label{eq:wa1}
\end{equation}
Moreover, assume $\left(A,B\right)$ is maximal, and $\delta_{x}\in dom\left(B^{*}\overline{B}\right)$;
then 
\begin{equation}
\int_{0}^{\infty}\lambda^{2}d\mu_{x}^{\left(A\right)}\left(\lambda\right)=\int_{0}^{\infty}\lambda\left\Vert P^{\left(B\right)}\left(\lambda\right)\delta_{x}\right\Vert _{\mathscr{H}}^{2}.\label{eq:wa2}
\end{equation}
\end{cor}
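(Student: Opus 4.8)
The plan is to treat the two displayed identities separately: the first is a direct bookkeeping of moments already computed earlier, while the second needs the maximality hypothesis as a bridge between the spectral picture of $A^{*}\overline{A}$ in $l^{2}$ and that of $B^{*}\overline{B}$ in $\mathscr{H}$.

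For (\ref{eq:wa1}) there is essentially nothing to prove beyond assembling prior formulas. By (\ref{eq:mm2}) in \lemref{dsp} the first moment of $\mu_{x}^{\left(A\right)}$ equals $\left\Vert \delta_{x}\right\Vert _{\mathscr{H}}^{2}$, and by (\ref{eq:bn5}) the second moment of $\mu_{x}^{\left(B\right)}$ also equals $\left\Vert \delta_{x}\right\Vert _{\mathscr{H}}^{2}$; hence the two integrals in (\ref{eq:wa1}) coincide and both equal $\left\Vert \delta_{x}\right\Vert _{\mathscr{H}}^{2}$.

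For (\ref{eq:wa2}) I would first carry out the domain bookkeeping. The hypothesis $\delta_{x}\in dom(B^{*}\overline{B})$ gives $\delta_{x}\in dom((B^{*}\overline{B})^{1/2})=dom(\overline{B})$. Under maximality the preceding theorem yields $\overline{B}=A^{*}$ and $\overline{A}=B^{*}$, so in particular $\delta_{x}\in dom(A^{*})$; and since $\overline{A}\delta_{x}=\delta_{x}$ (by \defref{dp}), we also get $\delta_{x}\in dom(A^{*}\overline{A})$ with $A^{*}\overline{A}\delta_{x}=A^{*}\delta_{x}$. Thus both spectral integrals in (\ref{eq:wa2}) are finite and the functional calculus of \remref{aa} applies. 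Next I would evaluate each side. Writing $\nu_{x}\left(\cdot\right)=\left\Vert P^{\left(B\right)}\left(\cdot\right)\delta_{x}\right\Vert _{\mathscr{H}}^{2}$, the functional calculus of $B^{*}\overline{B}$ gives $\int_{0}^{\infty}\lambda\,d\nu_{x}\left(\lambda\right)=\left\Vert (B^{*}\overline{B})^{1/2}\delta_{x}\right\Vert _{\mathscr{H}}^{2}$; the polar factorization (\ref{eq:p2}) of \corref{ab}, together with the fact that the partial isometry $W$ is isometric on the range of $(B^{*}\overline{B})^{1/2}$ (which is orthogonal to $\ker\overline{B}=\ker(B^{*}\overline{B})^{1/2}$), converts this into $\left\Vert \overline{B}\delta_{x}\right\Vert _{l^{2}}^{2}$. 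On the other side, \remref{aa} and (\ref{eq:mm3}) give $\int_{0}^{\infty}\lambda^{2}\,d\mu_{x}^{\left(A\right)}\left(\lambda\right)=\left\Vert A^{*}\overline{A}\delta_{x}\right\Vert _{l^{2}}^{2}=\left\Vert A^{*}\delta_{x}\right\Vert _{l^{2}}^{2}$. The maximality identity $A^{*}=\overline{B}$ then forces $A^{*}\delta_{x}=\overline{B}\delta_{x}$, so both sides equal $\left\Vert \overline{B}\delta_{x}\right\Vert _{l^{2}}^{2}$ and (\ref{eq:wa2}) follows.

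The one genuine point of care — and the only place maximality is used — is the identification $A^{*}\delta_{x}=\overline{B}\delta_{x}$: without $\overline{B}=A^{*}$ the vector $\delta_{x}$ need not even lie in $dom(A^{*})$ (this is precisely the content of \thmref{dd}), and the second moment of $\mu_{x}^{\left(A\right)}$ could be infinite while the first moment of $\nu_{x}$ stays finite. Everything else is routine spectral-theoretic bookkeeping: verifying the domain inclusions above and checking that $W$ acts isometrically on $(B^{*}\overline{B})^{1/2}\delta_{x}$ so that the polar decomposition passes the norm through unchanged. Once these are in place the two computations collapse to the same quantity $\left\Vert \overline{B}\delta_{x}\right\Vert _{l^{2}}^{2}$ and no further estimate is needed.
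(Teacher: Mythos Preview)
Your proof is correct and follows essentially the same route as the paper: for (\ref{eq:wa1}) you combine (\ref{eq:mm2}) and (\ref{eq:bn5}) exactly as the paper does, and for (\ref{eq:wa2}) you reduce both sides to $\left\Vert \overline{B}\delta_{x}\right\Vert _{l^{2}}^{2}=\left\Vert A^{*}\delta_{x}\right\Vert _{l^{2}}^{2}$ via maximality and the spectral calculus. The only cosmetic difference is that where you invoke the polar factorization (\ref{eq:p2}) to pass from $\left\Vert (B^{*}\overline{B})^{1/2}\delta_{x}\right\Vert _{\mathscr{H}}^{2}$ to $\left\Vert \overline{B}\delta_{x}\right\Vert _{l^{2}}^{2}$, the paper uses the equivalent one-line identity $\left\Vert \overline{B}\delta_{x}\right\Vert _{l^{2}}^{2}=\langle\delta_{x},B^{*}\overline{B}\delta_{x}\rangle_{\mathscr{H}}=\int_{0}^{\infty}\lambda\,\|P^{(B)}(d\lambda)\delta_{x}\|_{\mathscr{H}}^{2}$.
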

\begin{proof}
Eq. (\ref{eq:wa1}) follows from (\ref{eq:mm2}) and (\ref{eq:bn5}).

For (\ref{eq:wa2}), we have 
\begin{eqnarray*}
\int_{0}^{\infty}\lambda^{2}d\mu_{x}^{\left(A\right)}\left(\lambda\right) & \underset{\left(\text{by \ensuremath{\left(\ref{eq:mm3}\right)}}\right)}{=} & \left\Vert A^{*}\delta_{x}\right\Vert _{l^{2}}^{2}=\left\Vert \overline{B}\delta_{x}\right\Vert _{l^{2}}^{2}\\
 & = & \left\langle \delta_{x},B^{*}\overline{B}\delta_{x}\right\rangle _{\mathscr{H}}\\
 & = & \int_{0}^{\infty}\lambda\left\Vert P^{\left(B\right)}\left(\lambda\right)\delta_{x}\right\Vert _{\mathscr{H}}^{2},
\end{eqnarray*}
which is (\ref{eq:wa2}). 
\end{proof}

\section{\label{sec:egs}Sample point-masses in concrete models}

Suppose $V\subset D\subset\mathbb{R}^{d}$ where $V$ is countable
and discrete, but $D$ is open. In this case, we get two kernels:
$k$ on $D\times D$, and $k_{V}:=k\big|_{V\times V}$ on $V\times V$
by restriction. If $x\in V$, then $k_{x}^{\left(V\right)}\left(\cdot\right)=k\left(\cdot,x\right)$
is a function on $V$, while $k_{x}\left(\cdot\right)=k\left(\cdot,x\right)$
is a function on $D$. 

This means that the corresponding RKHSs are different, $\mathscr{H}_{V}$
vs $\mathscr{H}$, where $\mathscr{H}_{V}=$ a RKHS of functions on
$V$, and $\mathscr{H}=$ a RKHS of functions on $D$. 
\begin{lem}
\label{lem:mc1}$\mathscr{H}_{V}$ is isometrically contained in $\mathscr{H}$
via $k_{x}^{\left(V\right)}\longmapsto k_{x}$, $x\in V$. \end{lem}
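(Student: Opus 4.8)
The plan is to construct the isometry by hand on the generators and then extend by density. First I would define a linear map $T$ on the subspace $span\left\{k_x^{(V)}\mid x\in V\right\}$, which is dense in $\mathscr{H}_V$ by the completion in \defref{d1}, by setting $T\big(k_x^{(V)}\big)=k_x$ for each $x\in V$ and extending linearly; explicitly, $T\big(\sum_{x\in F}c_x k_x^{(V)}\big)=\sum_{x\in F}c_x k_x$ for every finite $F\subset V$ and every choice of coefficients $c_x$.

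The key computation is that $T$ preserves the inner product on this dense subspace, and this is where the hypothesis $k_V=k\big|_{V\times V}$ is used. Applying (\ref{eq:pd3}) in each of the two RKHSs, for $f=\sum_{x\in F}c_x k_x^{(V)}$ one has
\[
\left\Vert f\right\Vert_{\mathscr{H}_V}^2=\sum_{x,y\in F}\overline{c_x}\,c_y\,k_V(x,y)=\sum_{x,y\in F}\overline{c_x}\,c_y\,k(x,y)=\Big\Vert\sum_{x\in F}c_x k_x\Big\Vert_{\mathscr{H}}^2=\left\Vert Tf\right\Vert_{\mathscr{H}}^2,
\]
the middle equality being precisely the identity $k_V(x,y)=k(x,y)$ for all $x,y\in V$. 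The same identity for a general finite combination shows $T$ is isometric, and in particular it is well-defined on the quotients inherent in both completions: if $\sum_x c_x k_x^{(V)}$ has zero $\mathscr{H}_V$-norm, then the displayed equality forces $\sum_x c_x k_x$ to have zero $\mathscr{H}$-norm, so $T$ automatically respects the null spaces that are modded out. Thus no separate well-definedness argument is required.

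Finally, since $T$ is a linear isometry defined on a dense subspace of $\mathscr{H}_V$, it extends uniquely to a linear isometry $\overline{T}\colon\mathscr{H}_V\longrightarrow\mathscr{H}$ whose range is the closed span of $\left\{k_x\mid x\in V\right\}$ in $\mathscr{H}$; this is exactly the asserted isometric inclusion $k_x^{(V)}\longmapsto k_x$. I do not expect a real obstacle here: the only delicate point is well-definedness modulo zero-norm vectors, and that is subsumed by the isometry identity above. I would only add the caveat that $\overline{T}$ need not be surjective onto $\mathscr{H}$ — its range is merely the subspace generated by the sampled kernels $\{k_x\}_{x\in V}$ — which is consistent with the statement, since \lemref{mc1} claims only that $\mathscr{H}_V$ sits isometrically inside $\mathscr{H}$, not that the two spaces coincide.
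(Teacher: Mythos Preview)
Your proof is correct and follows the same approach as the paper: both verify the norm identity $\left\Vert \sum_{x\in F}\xi(x)k_x^{(V)}\right\Vert_{\mathscr{H}_V}=\left\Vert \sum_{x\in F}\xi(x)k_x\right\Vert_{\mathscr{H}}$ on finite linear combinations and conclude by density. You have simply spelled out the well-definedness and extension steps that the paper leaves implicit.
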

\begin{proof}
If $F\subset V$ is a finite subset, and $\xi=\xi_{F}$ is a function
on $F$, then 
\[
\left\Vert \sum\nolimits _{x\in F}\xi\left(x\right)k_{x}^{\left(V\right)}\right\Vert _{\mathscr{H}_{V}}=\left\Vert \sum\nolimits _{x\in F}\xi\left(x\right)k_{x}\right\Vert _{\mathscr{H}}.
\]
The desired result follows from this. (See \propref{riso} for the
case of \emph{point-mass samples}.)
\end{proof}
\textbf{Examples. }We are concerned with cases of kernels $k:D\times D\rightarrow\mathbb{R}$
with restriction $k_{V}:V\times V\rightarrow\mathbb{R}$, where $V$
is a countable discrete subset of $D$. Typically, for $x\in V$,
we may have (restriction) $\delta_{x}\big|_{V}\in\mathscr{H}_{V}$,
but $\delta_{x}\notin\mathscr{H}$; indeed this happens for the kernel
$k$ of standard Brownian motion: 

$D=\mathbb{R}_{+}$;

$V=$ an ordered subset $0<x_{1}<x_{2}<\cdots<x_{i}<x_{i+1}<\cdots$,
$V=\left\{ x_{i}\right\} _{i=1}^{\infty}$. 

In this case, we compute $\mathscr{H}_{V}$, and we show that $\delta_{x_{i}}\big|_{V}\in\mathscr{H}_{V}$;
while for $\mathscr{H}_{m}=$ the Cameron-Martin Hilbert space, we
have $\delta_{x_{i}}\notin\mathscr{H}_{m}$. 

Also note that $\delta_{x_{1}}$ has a different meaning with reference
to $\mathscr{H}_{V}$ vs $\mathscr{H}_{m}$. In the first case, it
is simply $\delta_{x_{1}}\left(y\right)=\begin{cases}
1 & y=x_{1}\\
0 & y\in V\backslash\left\{ x_{1}\right\} 
\end{cases}$. In the second case, $\delta_{x_{1}}$ is a Schwartz distribution.
We shall abuse notation, writing $\delta_{x}$ in both cases. 

In the following, we will consider restriction to $V\times V$ of
a special continuous p.d. kernel $k$ on $\mathbb{R}_{+}\times\mathbb{R}_{+}$.
It is $k\left(s,t\right)=s\wedge t=\min\left(s,t\right)$. Before
we restrict, note that the RKHS of this $k$ is the Cameron-Martin
Hilbert space of function $f$ on $\mathbb{R}_{+}$ with distribution
derivative $f'\in L^{2}\left(\mathbb{R}_{+}\right)$, and 
\begin{equation}
\left\Vert f\right\Vert _{\mathscr{H}}^{2}:=\int_{0}^{\infty}\left|f'\left(t\right)\right|^{2}dt<\infty.\label{eq:cm1}
\end{equation}
For details, see below. 
\begin{rem}[Application]
 The Hilbert space given by $\left\Vert \cdot\right\Vert _{\mathscr{H}}^{2}$
in (\ref{eq:cm1}) is called the Cameron-Martin Hilbert space, and,
as noted, it is the RKHS of $k:\mathbb{R}_{+}\times\mathbb{R}_{+}\rightarrow\mathbb{R}:$
$k\left(s,t\right):=s\wedge t$. Now pick a discrete subset $V\subset\mathbb{R}_{+}$;
then Lemma \ref{lem:mc1} states that the RKHS of the $V\times V$
restricted kernel, $k^{\left(V\right)}$ is isometrically embedded
into $\mathscr{H}$, i.e., setting 
\begin{equation}
J^{\left(V\right)}\left(k_{x}^{\left(V\right)}\right)=k_{x},\quad\forall x\in V;\label{eq:cm2}
\end{equation}
$J^{\left(V\right)}$ extends by ``closed span'' to an isometry
$\mathscr{H}_{V}\xrightarrow{J^{\left(V\right)}}\mathscr{H}$. It
further follows from the lemma, that the range of $J^{\left(V\right)}$
may have infinite co-dimension. 

Note that $P_{V}:=J^{\left(V\right)}\left(J^{\left(V\right)}\right)^{*}$
is the projection onto the range of $J^{\left(V\right)}$. The ortho-complement
is as follow: 
\begin{equation}
\mathscr{H}\ominus\mathscr{H}_{V}=\left\{ \psi\in\mathscr{H}\:\big|\:\psi\left(x\right)=0,\;\forall x\in V\right\} .\label{eq:cm3}
\end{equation}
\end{rem}
\begin{example}
Let $k$ and $k^{\left(V\right)}$ be as in (\ref{eq:cm2}), and set
$V:=\pi\mathbb{Z}_{+}$, i.e., integer multiples of $\pi$. Then easy
generators of wavelet functions \cite{BJ02} yield non-zero functions
$\psi$ on $\mathbb{R}_{+}$ such that 
\begin{equation}
\psi\in\mathscr{H}\ominus\mathscr{H}_{V}.\label{eq:cm4}
\end{equation}
More precisely, 
\begin{equation}
0<\int_{0}^{\infty}\left|\psi'\left(t\right)\right|^{2}dt<\infty,\label{eq:cm5}
\end{equation}
where $\psi'$ is the distribution (weak) derivative; and 
\begin{equation}
\psi\left(n\pi\right)=0,\quad\forall n\in\mathbb{Z}_{+}.\label{eq:cm6}
\end{equation}
An explicit solution to (\ref{eq:cm4})-(\ref{eq:cm6}) is 
\begin{equation}
\psi\left(t\right)=\prod_{n=1}^{\infty}\cos\left(\frac{t}{2^{n}}\right)=\frac{\sin t}{t},\quad\forall t\in\mathbb{R}.\label{eq:cm7}
\end{equation}
From this, one easily generates an infinite-dimensional set of solutions. 
\end{example}

\subsection{\label{sub:bm}Sample points in Brownian motion}

Consider the covariance function of standard Brownian motion $B_{t}$,
$t\in[0,\infty)$, i.e., a Gaussian process $\left\{ B_{t}\right\} $
with mean zero and covariance function 
\begin{equation}
\mathbb{E}\left(B_{s}B_{t}\right)=s\wedge t=\min\left(s,t\right).\label{eq:bm1}
\end{equation}
We now show that the restriction of (\ref{eq:bm1}) to $V\times V$
for an ordered subset (we fix such a set $V$):
\begin{equation}
V:\;0<x_{1}<x_{2}<\cdots<x_{i}<x_{i+1}<\cdots\label{eq:bm2}
\end{equation}
has the discrete mass property (Definition \ref{def:dmp}). 

Set $\mathscr{H}_{V}=RKHS(k\big|_{V\times V})$, 
\begin{equation}
k_{V}\left(x_{i},x_{j}\right)=x_{i}\wedge x_{j}.\label{eq:bm3}
\end{equation}
We consider the set $F_{n}=\left\{ x_{1},x_{2},\ldots,x_{n}\right\} $
of finite subsets of $V$, and 
\begin{equation}
K_{n}=k^{\left(F_{n}\right)}=\begin{bmatrix}x_{1} & x_{1} & x_{1} & \cdots & x_{1}\\
x_{1} & x_{2} & x_{2} & \cdots & x_{2}\\
x_{1} & x_{2} & x_{3} & \cdots & x_{3}\\
\vdots & \vdots & \vdots & \vdots & \vdots\\
x_{1} & x_{2} & x_{3} & \cdots & x_{n}
\end{bmatrix}=\left(x_{i}\wedge x_{j}\right)_{i,j=1}^{n}.\label{eq:bm4}
\end{equation}
We will show that condition \ref{enu:d3} in Theorem \ref{thm:del}
holds for $k_{V}$. For this, we must compute all the determinants,
$D_{n}=\det\left(K_{F}\right)$ etc. ($n=\#F$), see Corollary \ref{cor:sp}.
\begin{lem}
~ 
\begin{equation}
D_{n}=\det\left(\left(x_{i}\wedge x_{j}\right)_{i,j=1}^{n}\right)=x_{1}\left(x_{2}-x_{1}\right)\left(x_{3}-x_{2}\right)\cdots\left(x_{n}-x_{n-1}\right).\label{eq:bm5}
\end{equation}
\end{lem}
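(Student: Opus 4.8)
The plan is to reduce $K_n$ to upper-triangular form by a single determinant-preserving row operation and then read off the product of the diagonal entries.

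First I would record the explicit structure of the matrix. Since $V$ is ordered, $0 < x_1 < x_2 < \cdots$, the $(i,j)$ entry is $x_i \wedge x_j = x_{\min(i,j)}$; thus row $i$ reads $(x_1, x_2, \ldots, x_{i-1}, x_i, x_i, \ldots, x_i)$, increasing along the first $i$ columns and then constant, equal to $x_i$, thereafter. This is exactly the lower-triangular-looking pattern displayed in \eqref{eq:bm4}.

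Next I would subtract each row from the one below it: replace row $i$ by (row $i$) minus (row $i-1$) for $i = 2, \ldots, n$, applied to the \emph{original} rows. Equivalently, left-multiply $K_n$ by the lower bidiagonal unipotent matrix $L$ having $1$'s on the diagonal and $-1$'s on the first subdiagonal, for which $\det L = 1$. For the new row $i$ the $(i,j)$ entry becomes $x_{\min(i,j)} - x_{\min(i-1,j)}$. When $j \le i-1$ both minima equal $j$, so the entry vanishes; when $j \ge i$ the difference is $x_i - x_{i-1}$. Hence the transformed matrix is upper-triangular: the first row is unchanged, and for $i \ge 2$ row $i$ equals $(0, \ldots, 0, x_i - x_{i-1}, \ldots, x_i - x_{i-1})$, with the nonzero block starting in column $i$. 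The diagonal entries are therefore $x_1, x_2 - x_1, x_3 - x_2, \ldots, x_n - x_{n-1}$.

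Finally, since this operation preserves the determinant and the outcome is triangular, $D_n$ is the product of those diagonal entries, which is precisely \eqref{eq:bm5}. The computation is elementary; the only point requiring care is the cancellation $x_{\min(i,j)} - x_{\min(i-1,j)} = 0$ for $j < i$, which is what guarantees that the bidiagonal subtraction produces a genuinely triangular matrix rather than merely a sparser one, so I would state that step explicitly. (An equivalent route is induction on $n$: expanding along the last column, or taking the Schur complement of the $(n,n)$ entry, yields $D_n = (x_n - x_{n-1}) D_{n-1}$ with $D_1 = x_1$; but the single row reduction is cleaner and avoids repeated index bookkeeping.)
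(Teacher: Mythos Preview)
Your proof is correct and follows essentially the same approach as the paper: the paper's proof is the terse observation that $K_n$ is equivalent (via elementary operations) to the diagonal matrix $\mathrm{diag}(x_1,x_2-x_1,\ldots,x_n-x_{n-1})$, which is exactly what your row-subtraction argument establishes in detail (you stop at upper-triangular rather than diagonal, but that suffices). Your write-up is in fact more careful than the paper's, which merely asserts the equivalence without spelling out the cancellation.
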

\begin{proof}
Induction. In fact, 
\[
\begin{bmatrix}x_{1} & x_{1} & x_{1} & \cdots & x_{1}\\
x_{1} & x_{2} & x_{2} & \cdots & x_{2}\\
x_{1} & x_{2} & x_{3} & \cdots & x_{3}\\
\vdots & \vdots & \vdots & \vdots & \vdots\\
x_{1} & x_{2} & x_{3} & \cdots & x_{n}
\end{bmatrix}\sim\begin{bmatrix}x_{1} & 0 & 0 & \cdots & 0\\
0 & x_{2}-x_{1} & 0 & \cdots & 0\\
0 & 0 & x_{3}-x_{2} & \cdots & 0\\
\vdots & \vdots & \vdots & \ddots & \vdots\\
0 & \cdots & 0 & \cdots & x_{n}-x_{n-1}
\end{bmatrix},
\]
unitary equivalence in finite dimensions.
\end{proof}

\begin{lem}
Let 
\begin{equation}
\zeta_{\left(n\right)}:=K_{n}^{-1}\left(\delta_{x_{1}}\right)\left(\cdot\right)\label{eq:bm7}
\end{equation}
be as in  (\ref{eq:pd8}), so that 
\begin{equation}
\left\Vert P_{F_{n}}\left(\delta_{x_{1}}\right)\right\Vert _{\mathscr{H}_{V}}^{2}=\zeta_{\left(n\right)}\left(x_{1}\right).\label{eq:bm8}
\end{equation}
Then, 
\begin{eqnarray*}
\zeta_{\left(1\right)}\left(x_{1}\right) & = & \frac{1}{x_{1}}\\
\zeta_{\left(n\right)}\left(x_{1}\right) & = & \frac{x_{2}}{x_{1}\left(x_{2}-x_{1}\right)},\quad\text{for}\;n=2,3,\ldots,
\end{eqnarray*}
and 
\[
\left\Vert \delta_{x_{1}}\right\Vert _{\mathscr{H}_{V}}^{2}=\frac{x_{2}}{x_{1}\left(x_{2}-x_{1}\right)}.
\]
\end{lem}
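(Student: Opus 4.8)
The plan is to reduce everything to the single scalar $\left(K_{n}^{-1}\right)_{11}$, the top-left entry of the inverse min-matrix, and to evaluate it by Cramer's rule using the determinant formula already established in (\ref{eq:bm5}).

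First I would invoke \lemref{proj2}: since $x_{1}$ is the first of the ordered points, the quantity $\zeta_{\left(n\right)}\left(x_{1}\right)=\left(K_{n}^{-1}\delta_{x_{1}}\right)\left(x_{1}\right)$ is exactly the $(1,1)$-entry $\left(K_{n}^{-1}\right)_{11}=K_{n}^{-1}\left(x_{1},x_{1}\right)$, and by (\ref{eq:pd8}) it equals $\left\Vert P_{F_{n}}\delta_{x_{1}}\right\Vert _{\mathscr{H}_{V}}^{2}$. The case $n=1$ is immediate: $K_{1}=\left[x_{1}\right]$, so $\zeta_{\left(1\right)}\left(x_{1}\right)=1/x_{1}$.

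For $n\geq2$ I would compute the entry by Cramer's rule, writing $\left(K_{n}^{-1}\right)_{11}=\det K_{n}'/\det K_{n}$, where $K_{n}'$ is the $\left(x_{1},x_{1}\right)$-minor obtained by deleting the first row and column (as in \figref{minor} and \corref{sp}). The key observation is that $K_{n}'=\left(x_{i}\wedge x_{j}\right)_{i,j=2}^{n}$ is again a min-matrix, now for the ordered tuple $x_{2}<x_{3}<\cdots<x_{n}$; hence (\ref{eq:bm5}) applies verbatim to both numerator and denominator, yielding $\det K_{n}=x_{1}\left(x_{2}-x_{1}\right)\left(x_{3}-x_{2}\right)\cdots\left(x_{n}-x_{n-1}\right)$ and $\det K_{n}'=x_{2}\left(x_{3}-x_{2}\right)\cdots\left(x_{n}-x_{n-1}\right)$. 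Every factor past $\left(x_{2}-x_{1}\right)$ cancels, leaving $\zeta_{\left(n\right)}\left(x_{1}\right)=x_{2}/\left(x_{1}\left(x_{2}-x_{1}\right)\right)$, independent of $n$, as claimed.

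Finally, for the norm I would appeal to \corref{proj1}: the supremum $\sup_{F}\left(K_{F}^{-1}\delta_{x_{1}}\right)\left(x_{1}\right)$ may be computed along the cofinal family $\left\{ F_{n}\right\}$ (every finite $F\subset V$ sits inside some $F_{n}$, and $\left\Vert P_{F}\delta_{x_{1}}\right\Vert \leq\left\Vert P_{F_{n}}\delta_{x_{1}}\right\Vert$ by monotonicity of projections onto the nested subspaces $\mathscr{H}_{F}\subseteq\mathscr{H}_{F_{n}}$); since $\zeta_{\left(n\right)}\left(x_{1}\right)$ equals the constant $x_{2}/\left(x_{1}\left(x_{2}-x_{1}\right)\right)$ for all $n\geq2$, this supremum is finite and equals that constant. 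By \thmref{del} this simultaneously confirms $\delta_{x_{1}}\in\mathscr{H}_{V}$ and gives $\left\Vert \delta_{x_{1}}\right\Vert _{\mathscr{H}_{V}}^{2}=x_{2}/\left(x_{1}\left(x_{2}-x_{1}\right)\right)$. I expect no serious obstacle here; the only points requiring care are recognizing the deleted minor as a min-matrix of the same shape so that (\ref{eq:bm5}) can be reused, and treating $n=1$ separately since then no such minor exists.
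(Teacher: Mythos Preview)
Your proposal is correct and follows essentially the same route as the paper: both compute $\zeta_{(n)}(x_1)=(K_n^{-1})_{11}$ via Cramer's rule by recognizing the $(1,1)$-minor $K_n'=(x_i\wedge x_j)_{i,j=2}^{n}$ as another min-matrix to which (\ref{eq:bm5}) applies, cancel the common factors, and then invoke \corref{proj1} for the norm. Your write-up is slightly more explicit about the cofinality of $\{F_n\}$ in the final step, but otherwise the arguments coincide.
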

\begin{proof}
A direct computation shows the $\left(1,1\right)$ minor of the matrix
$K_{n}^{-1}$ is
\begin{equation}
D'_{n-1}=\det\left(\left(x_{i}\wedge x_{j}\right)_{i,j=2}^{n}\right)=x_{2}\left(x_{3}-x_{2}\right)\left(x_{4}-x_{3}\right)\cdots\left(x_{n}-x_{n-1}\right)\label{eq:bm6}
\end{equation}
and so 
\begin{eqnarray*}
\zeta_{\left(1\right)}\left(x_{1}\right) & = & \frac{1}{x_{1}},\quad\mbox{and}\\
\zeta_{\left(2\right)}\left(x_{1}\right) & = & \frac{x_{2}}{x_{1}\left(x_{2}-x_{1}\right)}\\
\zeta_{\left(3\right)}\left(x_{1}\right) & = & \frac{x_{2}\left(x_{3}-x_{2}\right)}{x_{1}\left(x_{2}-x_{1}\right)\left(x_{3}-x_{2}\right)}=\frac{x_{2}}{x_{1}\left(x_{2}-x_{1}\right)}\\
\zeta_{\left(4\right)}\left(x_{1}\right) & = & \frac{x_{2}\left(x_{3}-x_{2}\right)\left(x_{4}-x_{3}\right)}{x_{1}\left(x_{2}-x_{1}\right)\left(x_{3}-x_{2}\right)\left(x_{4}-x_{3}\right)}=\frac{x_{2}}{x_{1}\left(x_{2}-x_{1}\right)}\\
 & \vdots
\end{eqnarray*}
The result follows from this, and from Corollary \ref{cor:proj1}.\end{proof}
\begin{cor}
\label{cor:proj}$P_{F_{n}}\left(\delta_{x_{1}}\right)=P_{F_{2}}\left(\delta_{x_{1}}\right)$,
$\forall n\geq2$. Therefore, 
\begin{equation}
\delta_{x_{1}}\in\mathscr{H}_{V}^{\left(F_{2}\right)}:=span\{k_{x_{1}}^{\left(V\right)},k_{x_{2}}^{\left(V\right)}\}
\end{equation}
and
\begin{equation}
\delta_{x_{1}}=\zeta_{\left(2\right)}\left(x_{1}\right)k_{x_{1}}^{\left(V\right)}+\zeta_{\left(2\right)}\left(x_{2}\right)k_{x_{2}}^{\left(V\right)}
\end{equation}
where 
\[
\zeta_{\left(2\right)}\left(x_{i}\right)=K_{2}^{-1}\left(\delta_{x_{1}}\right)\left(x_{i}\right),\;i=1,2.
\]
Specifically, 
\begin{eqnarray}
\zeta_{\left(2\right)}\left(x_{1}\right) & = & \frac{x_{2}}{x_{1}\left(x_{2}-x_{1}\right)}\\
\zeta_{\left(2\right)}\left(x_{2}\right) & = & \frac{-1}{x_{2}-x_{1}};
\end{eqnarray}
and 
\begin{equation}
\left\Vert \delta_{x_{1}}\right\Vert _{\mathscr{H}_{V}}^{2}=\frac{x_{2}}{x_{1}\left(x_{2}-x_{1}\right)}.\label{eq:dn}
\end{equation}
\end{cor}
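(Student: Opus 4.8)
The plan is to build directly on the preceding lemma, which computes $\zeta_{\left(n\right)}\left(x_1\right)=\left\Vert P_{F_n}\delta_{x_1}\right\Vert_{\mathscr{H}_V}^2$ and shows this quantity equals the single value $\tfrac{x_2}{x_1\left(x_2-x_1\right)}$ for \emph{every} $n\geq2$. The sets $F_2\subset F_3\subset\cdots$ are nested, so the subspaces $\mathscr{H}_V^{\left(F_n\right)}=span\{k_{x_1}^{\left(V\right)},\dots,k_{x_n}^{\left(V\right)}\}$ increase with $n$, whence $P_{F_2}\leq P_{F_n}$ as orthogonal projections. For such nested projections Pythagoras gives $\left\Vert P_{F_n}\delta_{x_1}\right\Vert^2=\left\Vert P_{F_2}\delta_{x_1}\right\Vert^2+\left\Vert\left(P_{F_n}-P_{F_2}\right)\delta_{x_1}\right\Vert^2$. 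Since the preceding lemma makes the two outer terms equal, the middle term must vanish, i.e. $\left(P_{F_n}-P_{F_2}\right)\delta_{x_1}=0$; this is exactly the stabilization $P_{F_n}\delta_{x_1}=P_{F_2}\delta_{x_1}$ for all $n\geq2$ asserted in the first part of the statement.

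Next I would upgrade this stabilization to membership in $\mathscr{H}_V^{\left(F_2\right)}$. Because $\bigcup_n\mathscr{H}_V^{\left(F_n\right)}=span\{k_x^{\left(V\right)}:x\in V\}$ is dense in $\mathscr{H}_V$ and the $P_{F_n}$ increase, one has $P_{F_n}\to I$ strongly on $\mathscr{H}_V$; applied to $\delta_{x_1}$, which is known to have finite $\mathscr{H}_V$-norm by the preceding computations, this yields $P_{F_n}\delta_{x_1}\to\delta_{x_1}$. Combined with the constancy just proved, the limit is $P_{F_2}\delta_{x_1}$, so $\delta_{x_1}=P_{F_2}\delta_{x_1}\in\mathscr{H}_V^{\left(F_2\right)}$. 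The explicit expansion is then read off from \lemref{proj}: $P_{F_2}\delta_{x_1}=\sum_{y\in F_2}\left(K_{F_2}^{-1}\delta_{x_1}\right)\left(y\right)k_y^{\left(V\right)}=\zeta_{\left(2\right)}\left(x_1\right)k_{x_1}^{\left(V\right)}+\zeta_{\left(2\right)}\left(x_2\right)k_{x_2}^{\left(V\right)}$, and the coefficients come from inverting the $2\times2$ block $K_{F_2}$ with rows $\left(x_1,x_1\right)$ and $\left(x_1,x_2\right)$, whose determinant is $x_1\left(x_2-x_1\right)$; applying $K_{F_2}^{-1}$ to $\delta_{x_1}=\left(1,0\right)^{tr}$ gives $\zeta_{\left(2\right)}\left(x_1\right)=\tfrac{x_2}{x_1\left(x_2-x_1\right)}$ and $\zeta_{\left(2\right)}\left(x_2\right)=\tfrac{-1}{x_2-x_1}$.

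Finally, the norm identity $\left\Vert\delta_{x_1}\right\Vert_{\mathscr{H}_V}^2=\tfrac{x_2}{x_1\left(x_2-x_1\right)}$ follows at once from \lemref{proj2}, since $\left\Vert\delta_{x_1}\right\Vert^2=\left\Vert P_{F_2}\delta_{x_1}\right\Vert^2=\left(K_{F_2}^{-1}\delta_{x_1}\right)\left(x_1\right)=\zeta_{\left(2\right)}\left(x_1\right)$; equivalently this is the value of the supremum in \corref{proj1}. I do not expect a genuine obstacle here. The one step deserving care is the passage from ``constant norm'' to ``equal projection,'' where it is essential that the projections be \emph{nested} so that Pythagoras applies; constancy of the norms alone would not force $P_{F_n}\delta_{x_1}$ to stabilize. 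As an independent cross-check one can bypass the projection argument entirely: set $g=\zeta_{\left(2\right)}\left(x_1\right)k_{x_1}^{\left(V\right)}+\zeta_{\left(2\right)}\left(x_2\right)k_{x_2}^{\left(V\right)}$ and verify directly, using $k_{x_i}^{\left(V\right)}\left(x_j\right)=x_i\wedge x_j$ and the ordering of $V$, that $g\left(x_j\right)=\delta_{x_1}\left(x_j\right)$ for all $j$; since elements of an RKHS are determined by their pointwise values, this gives $g=\delta_{x_1}$, recovering both the expansion and the membership.
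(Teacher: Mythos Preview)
Your proof is correct and follows essentially the same route as the paper: both derive the corollary from the preceding lemma's computation that $\zeta_{(n)}(x_1)=\Vert P_{F_n}\delta_{x_1}\Vert^2$ stabilizes at $n=2$, together with the monotonicity $F_2\subset F_3\subset\cdots$. The paper's proof is extremely terse (``Follows from the lemma''), and you have simply filled in the details it leaves implicit---the Pythagoras step for nested projections, the strong convergence $P_{F_n}\to I$, and the explicit $2\times 2$ inversion---while your closing pointwise cross-check is a pleasant alternative the paper does not mention.
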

\begin{proof}
Follows from the lemma. Note that 
\[
\zeta_{n}\left(x_{1}\right)=\left\Vert P_{F_{n}}\left(\delta_{x_{1}}\right)\right\Vert _{\mathscr{H}}^{2}
\]
and $\zeta_{\left(1\right)}\left(x_{1}\right)\leq\zeta_{\left(2\right)}\left(x_{1}\right)\leq\cdots$,
since $F_{n}=\left\{ x_{1},x_{2},\ldots,x_{n}\right\} $. In particular,
$\frac{1}{x_{1}}\leq\frac{x_{2}}{x_{1}\left(x_{2}-x_{1}\right)}$,
which yields (\ref{eq:dn}). \end{proof}
\begin{rem}
We showed that $\delta_{x_{1}}\in\mathscr{H}_{V}$, $V=\left\{ x_{1}<x_{2}<\cdots\right\} \subset\mathbb{R}_{+}$,
with the restriction of $s\wedge t$ = the covariance kernel of Brownian
motion. 

The same argument also shows that $\delta_{x_{i}}\in\mathscr{H}_{V}$
when $i>1$. We only need to modify the index notation from the case
of the proof for $\delta_{x_{1}}\in\mathscr{H}_{V}$. The details
are sketched below.

Fix $V=\left\{ x_{i}\right\} _{i=1}^{\infty}$, $x_{1}<x_{2}<\cdots$,
then 
\[
P_{F_{n}}\left(\delta_{x_{i}}\right)=\begin{cases}
0 & \text{if \ensuremath{n<i-1}}\\
\sum_{s=1}^{n}\left(K_{F_{n}}^{-1}\delta_{x_{i}}\right)\left(x_{s}\right)k_{x_{s}} & \text{if \ensuremath{n\geq i}}
\end{cases}
\]
and 
\[
\left\Vert P_{F_{n}}\left(\delta_{x_{i}}\right)\right\Vert _{\mathscr{H}}^{2}=\begin{cases}
0 & \text{if \ensuremath{n<i-1}}\\
\frac{1}{x_{i}-x_{i-1}} & \text{if \ensuremath{n=i}}\\
\frac{x_{i+1}-x_{i-1}}{\left(x_{i}-x_{i-1}\right)\left(x_{i+1}-x_{i}\right)} & \text{if \ensuremath{n>i}}
\end{cases}
\]
\textbf{Conclusion.} 
\begin{eqnarray}
\delta_{x_{i}} & \in & span\left\{ k_{x_{i-1}}^{\left(V\right)},k_{x_{i}}^{\left(V\right)},k_{x_{i+1}}^{\left(V\right)}\right\} ,\quad\mbox{and}\\
\left\Vert \delta_{x_{i}}\right\Vert _{\mathscr{H}}^{2} & = & \frac{x_{i+1}-x_{i-1}}{\left(x_{i}-x_{i-1}\right)\left(x_{i+1}-x_{i}\right)}.
\end{eqnarray}
\end{rem}
\begin{cor}
Let $V\subset\mathbb{R}_{+}$ be countable. If $x_{a}\in V$ is an
accumulation point (from $V$), then $\left\Vert \delta_{a}\right\Vert _{\mathscr{H}_{V}}=\infty$. \end{cor}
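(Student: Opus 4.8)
The plan is to show $\delta_{a}\notin\mathscr{H}_{V}$ by producing a sequence of finite subsets of $V$ along which the quantity $\left(K_{F}^{-1}\delta_{a}\right)(a)$ diverges; by the equivalence (\ref{enu:d1})$\Leftrightarrow$(\ref{enu:d3}) in \thmref{del} (equivalently, by \corref{proj1}) this forces $\left\Vert \delta_{a}\right\Vert _{\mathscr{H}_{V}}=\infty$. The reduction to a finite, in fact \emph{two-point}, computation is the key simplification: by \lemref{proj2} we have $\left(K_{F}^{-1}\delta_{a}\right)(a)=\left\Vert P_{F}\delta_{a}\right\Vert _{\mathscr{H}_{V}}^{2}$, and since the projections $P_{F}$ are nested, the map $F\mapsto\left\Vert P_{F}\delta_{a}\right\Vert _{\mathscr{H}_{V}}^{2}$ is monotone nondecreasing in $F\in\mathscr{F}(V)$. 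Hence it suffices to exhibit, for each $n$, a single finite set on which this projection norm is large.

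Next I would invoke the accumulation hypothesis. Since $x_{a}$ is an accumulation point of $V$ from within $V$, there is a sequence $\{w_{n}\}\subset V\setminus\{x_{a}\}$ of distinct points with $w_{n}\to x_{a}$. Passing to a subsequence, I may assume either $w_{n}>x_{a}$ for all $n$, or $w_{n}<x_{a}$ for all $n$ (one side must contain infinitely many of the $w_{n}$). I then take the two-point sets $F_{n}:=\{x_{a},w_{n}\}\in\mathscr{F}(V)$, for which the $2\times 2$ Gram matrix $K_{F_{n}}$ of the kernel $k(s,t)=s\wedge t$ is invertible (its determinant is positive, as computed below). If $\delta_{a}\in\mathscr{H}_{V}$, then $\left\Vert \delta_{a}\right\Vert _{\mathscr{H}_{V}}^{2}\geq\left\Vert P_{F_{n}}\delta_{a}\right\Vert _{\mathscr{H}_{V}}^{2}=\left(K_{F_{n}}^{-1}\delta_{a}\right)(a)$ for every $n$, so divergence of the right-hand side gives a contradiction.

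Finally I carry out the elementary computation. In the case $w_{n}>x_{a}$ one has $K_{F_{n}}=\bigl(\begin{smallmatrix}x_{a} & x_{a}\\ x_{a} & w_{n}\end{smallmatrix}\bigr)$ with $\det K_{F_{n}}=x_{a}(w_{n}-x_{a})>0$, and the $(a,a)$-entry of the inverse is $\left(K_{F_{n}}^{-1}\delta_{a}\right)(a)=\frac{w_{n}}{x_{a}(w_{n}-x_{a})}$; in the case $w_{n}<x_{a}$ the analogous computation (ordering $w_{n}$ before $x_{a}$) gives $\left(K_{F_{n}}^{-1}\delta_{a}\right)(a)=\frac{1}{x_{a}-w_{n}}$. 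In both cases the denominator $\left|w_{n}-x_{a}\right|\to 0$, so $\left(K_{F_{n}}^{-1}\delta_{a}\right)(a)\to+\infty$. By \thmref{del} this contradicts $\delta_{a}\in\mathscr{H}_{V}$, whence $\left\Vert \delta_{a}\right\Vert _{\mathscr{H}_{V}}=\infty$. This also dovetails with the explicit norm formula $\left\Vert \delta_{x_{i}}\right\Vert _{\mathscr{H}_{V}}^{2}=\frac{x_{i+1}-x_{i-1}}{(x_{i}-x_{i-1})(x_{i+1}-x_{i})}$ established above, which blows up precisely when a neighboring gap collapses.

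I expect the only genuinely delicate point to be conceptual rather than computational: recognizing that a single nearby sample point already forces the projection norm to be large, so that the (possibly complicated) global order structure of the accumulating sequence is irrelevant, and the monotonicity of $P_{F}$ reduces the whole statement to the two-point estimate. The strict positivity of $s\wedge t$ on finite subsets, needed for $K_{F_{n}}^{-1}$ to exist, is automatic here from the positive determinants just displayed.
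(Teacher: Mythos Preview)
Your proof is correct. The paper gives no explicit proof of this corollary, treating it as immediate from the preceding norm formula $\left\Vert \delta_{x_{i}}\right\Vert _{\mathscr{H}_{V}}^{2}=\frac{x_{i+1}-x_{i-1}}{(x_{i}-x_{i-1})(x_{i+1}-x_{i})}$, which was in turn obtained from two- and three-point projection computations identical in spirit to yours (indeed your case $w_{n}>x_{a}$ is literally the paper's $\zeta_{(2)}(x_{1})=\frac{x_{2}}{x_{1}(x_{2}-x_{1})}$). Your reduction to a single two-point subset via the monotonicity of $F\mapsto\left\Vert P_{F}\delta_{a}\right\Vert ^{2}$ is the same mechanism, just isolated more cleanly; it also has the minor advantage of applying directly to an arbitrary countable $V\subset\mathbb{R}_{+}$ with an accumulation point, without the standing assumption (used for the three-point formula) that $V$ is enumerated in increasing order $x_{1}<x_{2}<\cdots$.
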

\begin{example}
\label{exa:bm}An illustration for $0<x_{1}<x_{2}<x_{3}<x_{4}$: 
\begin{eqnarray*}
P_{F}\left(\delta_{x_{3}}\right) & = & \sum_{y\in F}\zeta^{\left(F\right)}\left(y\right)k_{y}\left(\cdot\right)\\
\zeta^{\left(F\right)} & = & K_{F}^{-1}\delta_{x_{3}}\;.
\end{eqnarray*}
That is, 
\[
\underset{\left(K_{F}\left(x_{i},x_{j}\right)\right)_{i,j=1}^{4}}{\underbrace{\begin{bmatrix}x_{1} & x_{1} & x_{1} & x_{1}\\
x_{1} & x_{2} & x_{2} & x_{2}\\
x_{1} & x_{2} & x_{3} & x_{3}\\
x_{1} & x_{2} & x_{3} & x_{4}
\end{bmatrix}}}\begin{bmatrix}\zeta^{\left(F\right)}\left(x_{1}\right)\\
\zeta^{\left(F\right)}\left(x_{2}\right)\\
\zeta^{\left(F\right)}\left(x_{3}\right)\\
\zeta^{\left(F\right)}\left(x_{4}\right)
\end{bmatrix}=\begin{bmatrix}0\\
0\\
1\\
0
\end{bmatrix}
\]
and 
\begin{eqnarray*}
\zeta^{\left(F\right)}\left(x_{3}\right) & = & \frac{x_{1}\left(x_{2}-x_{1}\right)\left(x_{4}-x_{2}\right)}{x_{1}\left(x_{2}-x_{1}\right)\left(x_{3}-x_{2}\right)\left(x_{4}-x_{3}\right)}\\
 & = & \frac{x_{4}-x_{2}}{\left(x_{3}-x_{2}\right)\left(x_{4}-x_{3}\right)}=\left\Vert \delta_{x_{3}}\right\Vert _{\mathscr{H}}^{2}.
\end{eqnarray*}

\end{example}

\begin{example}[Sparse sample-points]
Let $V=\left\{ x_{i}\right\} _{i=1}^{\infty}$, where 
\[
x_{i}=\frac{i\left(i-1\right)}{2},\quad i\in\mathbb{N}.
\]
It follows that $x_{i+1}-x_{i}=i$, and so 
\[
\left\Vert \delta_{x_{i}}\right\Vert _{\mathscr{H}}^{2}=\frac{x_{i+1}-x_{i-1}}{\left(x_{i}-x_{i-1}\right)\left(x_{i+1}-x_{i}\right)}=\frac{2i-1}{\left(i-1\right)i}\xrightarrow[i\rightarrow\infty]{}0.
\]
We conclude that $\left\Vert \delta_{x_{i}}\right\Vert _{\mathscr{H}}\xrightarrow[i\rightarrow\infty]{}0$
if the set $V=\left\{ x_{i}\right\} _{i=1}^{\infty}\subset\mathbb{R}_{+}$
is sparse. 
\end{example}
Now, some general facts:
\begin{lem}
Let $k:V\times V\rightarrow\mathbb{C}$ be p.d., and let $\mathscr{H}$
be the corresponding RKHS. If $x_{1}\in V$, and if $\delta_{x_{1}}$
has a representation as follows:
\begin{equation}
\delta_{x_{1}}=\sum_{y\in V}\zeta^{\left(x_{1}\right)}\left(y\right)k_{y},\label{eq:pr1}
\end{equation}
then
\begin{equation}
\left\Vert \delta_{x_{1}}\right\Vert _{\mathscr{H}}^{2}=\zeta^{\left(x_{1}\right)}\left(x_{1}\right).\label{eq:pr2}
\end{equation}
\end{lem}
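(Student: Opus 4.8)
The plan is to read off (\ref{eq:pr2}) directly from the reproducing property (\ref{eq:pd31}) together with continuity of the $\mathscr{H}$-inner product, treating (\ref{eq:pr1}) as an identity in $\mathscr{H}$, i.e., as a series that converges in $\mathscr{H}$-norm to $\delta_{x_{1}}$. First I would compute $\left\Vert \delta_{x_{1}}\right\Vert _{\mathscr{H}}^{2}=\left\langle \delta_{x_{1}},\delta_{x_{1}}\right\rangle _{\mathscr{H}}$ by substituting the expansion (\ref{eq:pr1}) into the first (conjugate-linear) slot, keeping the second copy of $\delta_{x_{1}}$ intact.

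Next, invoking continuity of the inner product to move the summation outside the pairing, I would obtain
\[
\left\Vert \delta_{x_{1}}\right\Vert _{\mathscr{H}}^{2}=\sum_{y\in V}\overline{\zeta^{\left(x_{1}\right)}\left(y\right)}\,\left\langle k_{y},\delta_{x_{1}}\right\rangle _{\mathscr{H}}.
\]
The reproducing property (\ref{eq:pd31}), applied with $\varphi=\delta_{x_{1}}$, gives $\left\langle k_{y},\delta_{x_{1}}\right\rangle _{\mathscr{H}}=\delta_{x_{1}}\left(y\right)$, which vanishes unless $y=x_{1}$ and equals $1$ there. Hence the entire series collapses to the single surviving term $\overline{\zeta^{\left(x_{1}\right)}\left(x_{1}\right)}$. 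Since the left-hand side is a real, nonnegative norm-square, this forces $\zeta^{\left(x_{1}\right)}\left(x_{1}\right)$ to be real and equal to $\left\Vert \delta_{x_{1}}\right\Vert _{\mathscr{H}}^{2}$, which is precisely (\ref{eq:pr2}).

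The only genuine subtlety — and the step I expect to need the most care — is the interchange of summation and inner product, i.e., justifying that the pairing of the (possibly infinite) series (\ref{eq:pr1}) against $\delta_{x_{1}}$ may be carried out termwise. This is legitimate precisely because (\ref{eq:pr1}) is assumed to hold as a norm-convergent expansion in $\mathscr{H}$, so that $f\mapsto\left\langle f,\delta_{x_{1}}\right\rangle _{\mathscr{H}}$ is a bounded linear functional and may be applied term by term; no separate estimate on the coefficients $\left\{ \zeta^{\left(x_{1}\right)}\left(y\right)\right\} $ is needed beyond what is already packaged into the convergence of the series. This identity is the global counterpart of the finite-dimensional formula in \lemref{proj2}, where $\zeta^{\left(F\right)}=K_{F}^{-1}\delta_{x_{1}}$ and $\left\Vert P_{F}\delta_{x_{1}}\right\Vert _{\mathscr{H}}^{2}=\zeta^{\left(F\right)}\left(x_{1}\right)$; indeed one could alternatively derive (\ref{eq:pr2}) by passing to the limit over $F\in\mathscr{F}\left(V\right)$ in (\ref{eq:pd8}), using $P_{F}\delta_{x_{1}}\to\delta_{x_{1}}$, but the direct reproducing-kernel computation above is the cleaner route.
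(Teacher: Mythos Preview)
Your proof is correct and is essentially the same as the paper's: the paper's one-line argument is to ``substitute both sides of (\ref{eq:pr1}) into $\left\langle \delta_{x_{1}},\cdot\right\rangle _{\mathscr{H}}$,'' which is exactly your computation using the reproducing property to collapse the sum to the single term at $y=x_{1}$. Your version is more careful in that you explicitly justify the termwise evaluation via norm-convergence of (\ref{eq:pr1}) and note the reality of $\zeta^{(x_1)}(x_1)$, points the paper leaves implicit.
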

\begin{proof}
Substitute both sides of (\ref{eq:pr1}) into $\left\langle \delta_{x_{1}},\cdot\right\rangle _{\mathscr{H}}$
where $\left\langle \cdot,\cdot\right\rangle _{\mathscr{H}}$ denotes
the inner product in $\mathscr{H}$. \end{proof}
\begin{example}[Application]
 Suppose $V=\cup_{n}F_{n}$, $F_{n}\subset F_{n+1}$, where each
$F_{n}\in\mathscr{F}\left(V\right)$, then if $x_{1}\in F_{n}$, we
have 
\begin{equation}
P_{F_{n}}\left(\delta_{x_{1}}\right)=\sum_{y\in F_{n}}\left\langle x_{1},K_{F_{n}}^{-1}y\right\rangle _{l^{2}}k_{y}\label{eq:pr3}
\end{equation}
and 
\begin{equation}
\left\Vert P_{F_{n}}\left(\delta_{x_{1}}\right)\right\Vert _{\mathscr{H}}^{2}=\left\langle x_{1},K_{F_{n}}^{-1}x_{1}\right\rangle _{l^{2}}=\left(K_{F_{n}}^{-1}\delta_{x_{1}}\right)\left(x_{1}\right)\label{eq:pr4}
\end{equation}
and the expression $\left\Vert P_{F_{n}}\left(\delta_{x_{1}}\right)\right\Vert _{\mathscr{H}}^{2}$
is monotone in $n$, i.e., 
\[
\left\Vert P_{F_{n}}\left(\delta_{x_{1}}\right)\right\Vert _{\mathscr{H}}^{2}\leq\left\Vert P_{F_{n+1}}\left(\delta_{x_{1}}\right)\right\Vert _{\mathscr{H}}^{2}\leq\cdots\leq\left\Vert \delta_{x_{1}}\right\Vert _{\mathscr{H}}^{2}
\]
with 
\[
\sup_{n\in\mathbb{N}}\left\Vert P_{F_{n}}\left(\delta_{x_{1}}\right)\right\Vert _{\mathscr{H}}^{2}=\lim_{n\rightarrow\infty}\left\Vert P_{F_{n}}\left(\delta_{x_{1}}\right)\right\Vert _{\mathscr{H}}^{2}=\left\Vert \delta_{x_{1}}\right\Vert _{\mathscr{H}}^{2}.
\]

\end{example}
For other applications of reproducing kernel Hilbert spaces to the
analysis of Gaussian processes, see e.g., \cite{MR2573134,JP13}.
\begin{question}
Let $k:\mathbb{R}^{d}\times\mathbb{R}^{d}\rightarrow\mathbb{R}$ be
positive definite, and let $V\subset\mathbb{R}^{d}$ be a countable
discrete subset, e.g., $V=\mathbb{Z}^{d}$. When does $k\big|_{V\times V}$
have the \uline{discrete mass} property? 
\end{question}
Examples of the affirmative, or not, will be discussed below.

\subsection{Discrete RKHSs from restrictions}

Let $D:=[0,\infty)$, and $k:D\times D\rightarrow\mathbb{R}$, with
\[
k\left(x,y\right)=x\wedge y=\min\left(x,y\right).
\]
Restrict to $V:=\left\{ 0\right\} \cup\mathbb{Z}_{+}\subset D$, i.e.,
consider 
\[
k^{\left(V\right)}=k\big|_{V\times V}.
\]
$\mathscr{H}\left(k\right)$: Cameron-Martin Hilbert space, consisting
of functions $f\in L^{2}\left(\mathbb{R}\right)$ s.t. 
\[
\int_{0}^{\infty}\left|f'\left(x\right)\right|^{2}dx<\infty,\quad f\left(0\right)=0.
\]
$\mathscr{H}_{V}:=\mathscr{H}\left(k_{V}\right)$. Note that 
\[
f\in\mathscr{H}\left(k_{V}\right)\Longleftrightarrow\sum_{n}\left|f\left(n\right)-f\left(n+1\right)\right|^{2}<\infty.
\]

\begin{lem}
We have $\delta_{n}=2k_{n}-k_{n+1}-k_{n-1}\in\mathscr{H}_{V}$. \end{lem}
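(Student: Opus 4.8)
The plan is to verify the identity $\delta_{n}=2k_{n}-k_{n+1}-k_{n-1}$ by a direct pointwise computation, and then to read off membership in $\mathscr{H}_{V}$ from the fact that the right-hand side is manifestly a finite linear combination of kernel functions. Recall that for the restricted kernel $k^{\left(V\right)}\left(m,n\right)=m\wedge n$ on $V=\left\{ 0\right\} \cup\mathbb{Z}_{+}$, the reproducing vector $k_{n}$ is the function $k_{n}\left(m\right)=\min\left(m,n\right)$; in particular $k_{n-1},k_{n},k_{n+1}\in\mathscr{H}_{V}$ by \defref{d1}. The identity is stated for interior points $n\geq1$, so that $n-1,n,n+1$ all lie in $V$.

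First I would evaluate $g\left(m\right):=\left(2k_{n}-k_{n+1}-k_{n-1}\right)\left(m\right)=2\min\left(m,n\right)-\min\left(m,n+1\right)-\min\left(m,n-1\right)$ at an arbitrary lattice point $m\in V$, splitting into three cases. For $m\leq n-1$ all three minima equal $m$, so $g\left(m\right)=2m-m-m=0$. For $m\geq n+1$ the minima are $n$, $n+1$, $n-1$ respectively, giving $g\left(m\right)=2n-\left(n+1\right)-\left(n-1\right)=0$. For $m=n$ we get $2n-n-\left(n-1\right)=1$. Hence $g=\delta_{n}$ as a function on $V$. Since $k_{n-1},k_{n},k_{n+1}\in\mathscr{H}_{V}=\mathscr{H}\left(k_{V}\right)$, so does the finite linear combination $2k_{n}-k_{n+1}-k_{n-1}$; and by the case analysis this element of $\mathscr{H}_{V}$ coincides, as a function on $V$, with $\delta_{n}$. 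Therefore $\delta_{n}\in\mathscr{H}_{V}$, which is the assertion.

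There is no genuine analytic obstacle here: the only point to watch is the bookkeeping of the three cases and the boundary behavior at the base point, where $k_{0}\equiv0$ forces the restriction $n\geq1$. It is worth remarking, however, that this identity is precisely the instance of \propref{glap} for the nearest-neighbor graph on $\mathbb{Z}_{+}$ with unit conductances: there the graph Laplacian acts as $\left(\Delta_{c}f\right)\left(n\right)=2f\left(n\right)-f\left(n-1\right)-f\left(n+1\right)$, and $k\left(s,t\right)=s\wedge t$ is its Green's function, so that $\Delta_{c}k_{n}=\delta_{n}$ reproduces exactly the same combination. This explains conceptually why the particular coefficients $\left(2,-1,-1\right)$ occur, and it is the interpretation I would emphasize rather than treating the computation as an isolated accident.
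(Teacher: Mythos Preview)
Your proof is correct and essentially matches the paper's own argument. The paper proceeds by invoking \propref{glap} directly: with unit conductances the graph Laplacian is $\left(\Delta f\right)\left(n\right)=2f\left(n\right)-f\left(n-1\right)-f\left(n+1\right)$, and the Green's function identity $\Delta k_{n}=\delta_{n}$ yields the claimed combination; you instead verify the pointwise identity by the three-case split and then observe, as a remark, that this is precisely the content of \propref{glap}. The logical content is the same, only the order of presentation differs.
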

\begin{proof}
Introduce the discrete Laplacian $\Delta=\Delta_{c}$ (see (\ref{eq:L1})),
i.e., 
\[
\left(\Delta f\right)\left(x\right)=\sum_{y\sim x}c_{xy}\left(f\left(x\right)-f\left(y\right)\right),
\]
defined for all functions $f$ on $V=\left\{ 0\right\} \cup\mathbb{Z}_{+}\subset D$,
and $c:E\rightarrow\mathbb{R}_{+}$ is the corresponding conductance.
Setting $c\equiv1$, we get 
\[
\left(\Delta f\right)\left(n\right)=2f\left(n\right)-f\left(n-1\right)-f\left(n+1\right).
\]
But, by (\ref{eq:L3}) in \propref{glap}, we have $\Delta k_{n}=\delta_{n}$,
and the assertion of the lemma follows from this. Note that 
\[
\left\langle 2k_{n}-k_{n+1}-k_{n-1},k_{m}\right\rangle _{\mathscr{H}_{V}}=\left\langle \delta_{n},k_{m}\right\rangle _{\mathscr{H}_{V}}=\delta_{n,m}.
\]
\end{proof}
\begin{rem}
The same argument as in the proof of the lemma shows (\emph{mutatis
mutandis}) that any ordered discrete countable infinite subset $V\subset[0,\infty)$
yields 
\[
\mathscr{H}_{V}:=\mathscr{H}\left(k\big|_{V\times V}\right)
\]
as a RKHS which is discrete in that (\defref{dmp}) if $V=\left\{ x_{i}\right\} _{i=1}^{\infty}$,
$x_{i}\in\mathbb{R}_{+}$, then $\delta_{x_{i}}\in\mathscr{H}_{V}$,
$\forall i\in\mathbb{N}$. \end{rem}
\begin{proof}
Fix vertices $V=\left\{ x_{i}\right\} _{i=1}^{\infty}$, 
\begin{equation}
0<x_{1}<x_{2}<\cdots<x_{i}<x_{i+1}<\infty,\quad x_{i}\rightarrow\infty.
\end{equation}
Assign conductance 
\begin{equation}
c_{i,i+1}=c_{i+1,i}=\frac{1}{x_{i+1}-x_{i}}\left(=\frac{1}{\text{dist}}\right)
\end{equation}
Let 
\begin{eqnarray}
\left(\Delta f\right)\left(x_{i}\right) & = & \left(\frac{1}{x_{i+1}-x_{i}}+\frac{1}{x_{i}-x_{i-1}}\right)f\left(x_{i}\right)\nonumber \\
 &  & -\frac{1}{x_{i}-x_{i-1}}f\left(x_{i-1}\right)-\frac{1}{x_{i+1}-x_{i}}f\left(x_{i+1}\right)
\end{eqnarray}
Equivalently, 
\begin{equation}
\left(\Delta f\right)\left(x_{i}\right)=\left(c_{i,i+1}+c_{i,i-1}\right)f\left(x_{i}\right)-c_{i,i-1}f\left(x_{i-1}\right)-c_{i,i+1}f\left(x_{i+1}\right).\label{eq:glap}
\end{equation}

Then, with (\ref{eq:glap}) we have: 
\[
\Delta k_{x_{i}}=\delta_{x_{i}}
\]
where $k\left(\cdot,\cdot\right)=$ restriction of $s\wedge t$ from
$[0,\infty)\times[0,\infty)$ to $V\times V$; and therefore 
\begin{equation}
\delta_{x_{i}}=\left(c_{i,i+1}+c_{i,i-1}\right)k_{x_{i}}-c_{i,i+1}k_{x_{i+1}}-c_{i,i-1}k_{x_{i-1}}\in\mathscr{H}_{V}
\end{equation}
as the right-side in the last equation is a finite sum. Note that
now the RKHS is 
\[
\mathscr{H}_{V}=\left\{ f:V\rightarrow\mathbb{C}\:\big|\:\sum_{i=1}^{\infty}c_{i,i+1}\left|f\left(x_{i+1}\right)-f\left(x_{i}\right)\right|^{2}<\infty\right\} .
\]

\end{proof}

\subsection{\label{sub:Bridge}Brownian bridge}

Let $D:=\left(0,1\right)=$ the open interval $0<t<1$, and set 
\begin{equation}
k_{bridge}\left(s,t\right):=s\wedge t-st;\label{eq:bb1}
\end{equation}
then (\ref{eq:bb1}) is the covariance function for the Brownian bridge
$B_{bri}\left(t\right)$, i.e., 
\begin{equation}
B_{bri}\left(0\right)=B_{bri}\left(1\right)=0\label{eq:bb2}
\end{equation}

\begin{equation}
B_{bri}\left(t\right)=\left(1-t\right)B\left(\frac{t}{1-t}\right),\quad0<t<1;\label{eq:bb3}
\end{equation}
where $B\left(t\right)$ is Brownian motion; see \lemref{mc1}.

\begin{figure}[H]
\includegraphics[width=0.5\columnwidth]{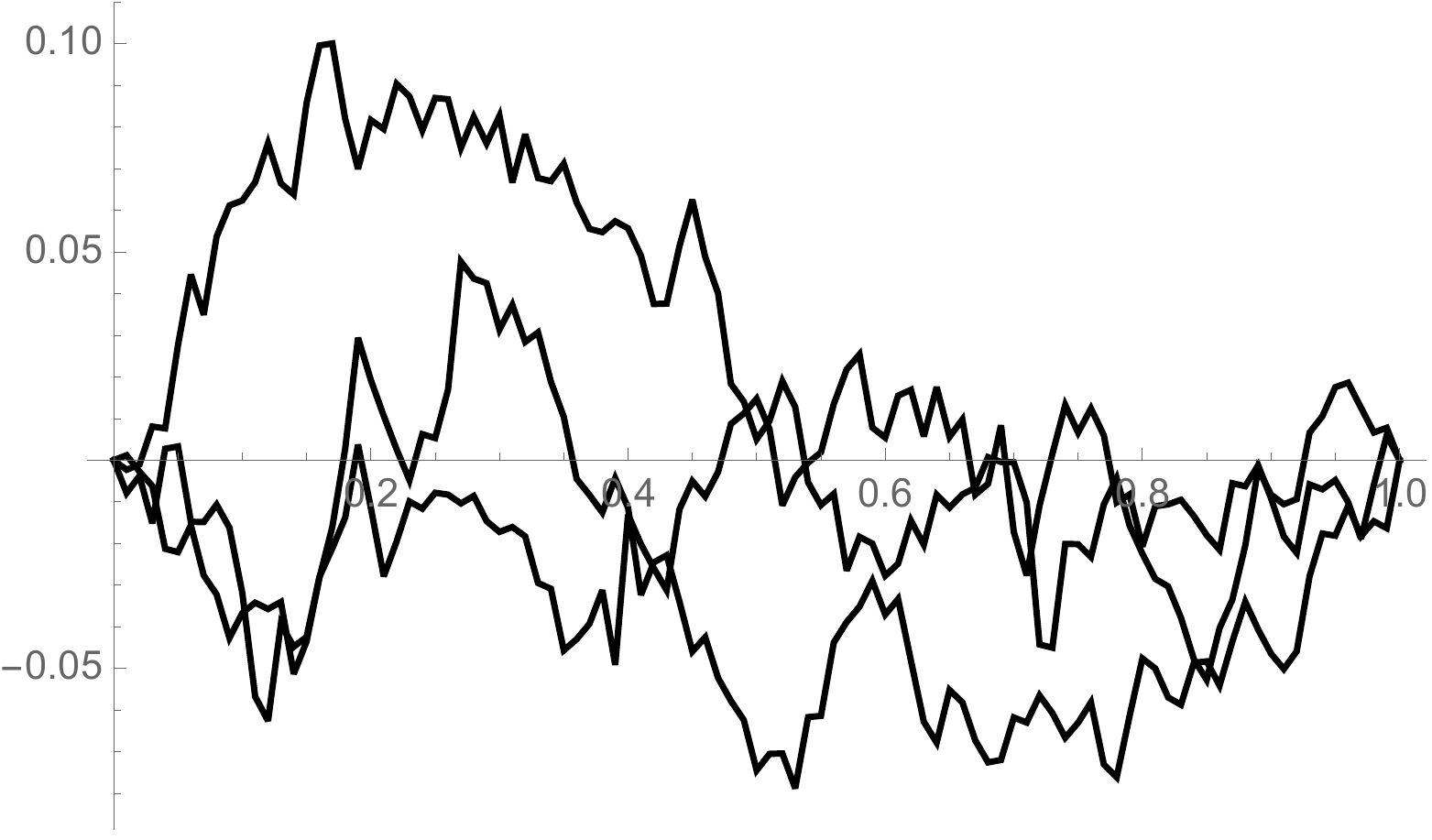}

\caption{\label{fig:bb}Brownian bridge $B_{bri}\left(t\right)$, a simulation
of three sample paths of the Brownian bridge.}
\end{figure}

The corresponding Cameron-Martin space is now 
\begin{equation}
\mathscr{H}_{bri}=\left\{ f\;\mbox{on}\:\left[0,1\right];f'\in L^{2}\left(0,1\right),f\left(0\right)=f\left(1\right)=0\right\} \label{eq:bb4}
\end{equation}
with 
\begin{equation}
\left\Vert f\right\Vert _{\mathscr{H}_{bri}}^{2}:=\int_{0}^{1}\left|f'\left(s\right)\right|^{2}ds<\infty.\label{eq:bb5}
\end{equation}

If $V=\left\{ x_{i}\right\} _{i=1}^{\infty}$, $x_{1}<x_{2}<\cdots<1$,
is the discrete subset of $D$, then we have for $F_{n}\in\mathscr{F}\left(V\right)$,
$F_{n}=\left\{ x_{1},x_{2},\cdots,x_{n}\right\} $, 
\begin{equation}
K_{F_{n}}=\left(k_{bridge}\left(x_{i},x_{j}\right)\right)_{i,j=1}^{n},\label{eq:bb6}
\end{equation}
see (\ref{eq:bb1}), and 
\begin{equation}
\det K_{F_{n}}=x_{1}\left(x_{2}-x_{1}\right)\cdots\left(x_{n}-x_{n-1}\right)\left(1-x_{n}\right).\label{eq:bb7}
\end{equation}

As a result, we get $\delta_{x_{i}}\in\mathscr{H}_{V}^{\left(bri\right)}$
for all $i$, and 
\[
\left\Vert \delta_{x_{i}}\right\Vert _{\mathscr{H}_{V}^{\left(bri\right)}}^{2}=\frac{x_{i+1}-x_{i-1}}{\left(x_{i+1}-x_{i}\right)\left(x_{i}-x_{i-1}\right)}.
\]
Note $\lim_{x_{i}\rightarrow1}\left\Vert \delta_{x_{i}}\right\Vert _{\mathscr{H}_{V}^{\left(bri\right)}}^{2}=\infty$.

\subsection{\label{sub:bion}Binomial RKHS}

It is possible to associate a positive definite kernel (see \defref{bino})
to the standard binomial coefficients. In this section we outline
the properties of this kernel and its reproducing kernel Hilbert space.
Among the conclusions is that in this RKHS, the point-masses have
infinite $\mathscr{H}$-norm.
\begin{defn}
\label{def:bino}Let $V=\mathbb{Z}_{+}\cup\left\{ 0\right\} $; and
\[
k_{b}\left(x,y\right):=\sum_{n=0}^{x\wedge y}\binom{x}{n}\binom{y}{n},\quad\left(x,y\right)\in V\times V.
\]
where $\binom{x}{n}=\frac{x\left(x-1\right)\cdots\left(x-n+1\right)}{n!}$
denotes the standard binomial coefficient from the binomial expansion.

Let $\mathscr{H}=\mathscr{H}\left(k_{b}\right)$ be the corresponding
RKHS. Set 
\begin{equation}
e_{n}\left(x\right)=\begin{cases}
\binom{x}{n} & \text{if \ensuremath{n\leq x}}\\
0 & \text{if \ensuremath{n>x}}.
\end{cases}\label{eq:b1}
\end{equation}
\end{defn}
\begin{lem}[\cite{AJ15}]
\label{lem:b1}~
\begin{enumerate}[label=(\roman{enumi})]
\item $e_{n}\left(\cdot\right)\in\mathscr{H}$, $n\in V$; 
\item $\left\{ e_{n}\right\} _{n\in V}$ is an orthonormal basis (ONB) in
the Hilbert space $\mathscr{H}$. 
\item Set $F_{n}=\left\{ 0,1,2,\ldots,n\right\} $, and 
\begin{equation}
P_{F_{n}}=\sum_{k=0}^{n}\left|e_{k}\left\rangle \right\langle e_{k}\right|\label{eq:b2}
\end{equation}
or equivalently 
\begin{equation}
P_{F_{n}}f=\sum_{k=0}^{n}\left\langle e_{k},f\right\rangle _{\mathscr{H}}e_{k}\,.\label{eq:b3}
\end{equation}

\end{enumerate}

then, 
\begin{enumerate}[resume]
\item[(iv)] Formula (\ref{eq:b3}) is well defined for all functions $f:V\rightarrow\mathbb{C}$,
$f\in\mathscr{F}unc\left(V\right)$. \end{enumerate}
\begin{enumerate}
\item[(v)] Given $f\in\mathscr{F}unc\left(V\right)$; then 
\begin{equation}
f\in\mathscr{H}\Longleftrightarrow\sum_{k=0}^{\infty}\left|\left\langle e_{k},f\right\rangle _{\mathscr{H}}\right|^{2}<\infty;\label{eq:b4}
\end{equation}
and, in this case, 
\[
\left\Vert f\right\Vert _{\mathscr{H}}^{2}=\sum_{k=0}^{\infty}\left|\left\langle e_{k},f\right\rangle _{\mathscr{H}}\right|^{2}.
\]

\end{enumerate}
\end{lem}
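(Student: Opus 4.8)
The plan is to reduce everything to a single structural identity. Directly from \defref{bino}, the reproducing kernel decomposes as $k_x=\sum_{n=0}^{x}\binom{x}{n}e_n$ as functions on $V$, since $k_x(y)=k_b(x,y)=\sum_n\binom{x}{n}\binom{y}{n}=\sum_n\binom{x}{n}e_n(y)$. This exhibits $\{k_x\}$ in terms of $\{e_n\}$ through the lower-triangular Pascal matrix $P=(\binom{x}{n})$, which has unit diagonal and is therefore invertible with the classical inverse $(P^{-1})_{n,x}=(-1)^{n-x}\binom{n}{x}$. Inverting the Pascal relation is the engine driving all five parts; I would establish (i)--(ii) first, deduce (iii), and finally treat the extension (iv) and the characterization (v).

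For (i), inverting the triangular system gives $e_n=\sum_{x=0}^{n}(-1)^{n-x}\binom{n}{x}k_x$, a finite combination of kernels, so $e_n\in\mathscr{H}$. For orthonormality in (ii), I would use the reproducing property (\ref{eq:pd31}), which gives $\langle k_x,e_n\rangle_{\mathscr{H}}=e_n(x)=\binom{x}{n}$, and then compute $\langle e_n,e_m\rangle_{\mathscr{H}}=\sum_{x}(-1)^{n-x}\binom{n}{x}\binom{x}{m}$; the collapse $\binom{n}{x}\binom{x}{m}=\binom{n}{m}\binom{n-m}{x-m}$ together with $\sum_j(-1)^j\binom{N}{j}=0^N$ yields $\langle e_n,e_m\rangle_{\mathscr{H}}=\delta_{n,m}$. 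Totality follows because $\{e_n\}$ and $\{k_x\}$ have the same finite linear span (each is a finite combination of the other), and $\overline{\mathrm{span}}\{k_x\}=\mathscr{H}$ by \defref{d1}; an orthonormal total set is an ONB.

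Part (iii) is then immediate: the mutual triangular expansions show $\mathrm{span}\{k_x\}_{x\le n}=\mathrm{span}\{e_k\}_{k\le n}=\mathscr{H}_{F_n}$, so the orthogonal projection onto $\mathscr{H}_{F_n}$ is $\sum_{k=0}^{n}|e_k\rangle\langle e_k|$. For (iv) and (v) the point is that the coefficient functional extends off $\mathscr{H}$: feeding $e_k=\sum_{x=0}^{k}(-1)^{k-x}\binom{k}{x}k_x$ into $\langle e_k,f\rangle_{\mathscr{H}}$ and using $\langle k_x,f\rangle=f(x)$ gives the \emph{finite} sum $\langle e_k,f\rangle_{\mathscr{H}}=\sum_{x=0}^{k}(-1)^{k-x}\binom{k}{x}f(x)$, which is defined for every $f\in\mathscr{F}unc(V)$; this legitimizes (\ref{eq:b3}) exactly as in the remark following \lemref{proj}. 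For (v), the forward implication is Parseval applied to the ONB. For the converse, assuming $\sum_k|\langle e_k,f\rangle_{\mathscr{H}}|^2<\infty$ I would set $g:=\sum_k\langle e_k,f\rangle_{\mathscr{H}}e_k\in\mathscr{H}$ (norm-convergent by orthonormality), and verify $g=f$ pointwise: since $\mathscr{H}$ is a RKHS, $g(x)=\langle k_x,g\rangle_{\mathscr{H}}=\sum_k\langle e_k,f\rangle_{\mathscr{H}}e_k(x)$, and substituting the finite-sum formula for the coefficients and applying the same binomial collapse shows $g(x)=f(x)$ for all $x$; then $\|f\|_{\mathscr{H}}^2=\|g\|_{\mathscr{H}}^2=\sum_k|\langle e_k,f\rangle_{\mathscr{H}}|^2$.

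The only computational input is the binomial inversion identity, which is standard, so the proof is essentially bookkeeping once the Pascal relation is noted. The step requiring genuine care is the passage in (iv)--(v) from the abstract $\mathscr{H}$-coefficient $\langle e_k,f\rangle_{\mathscr{H}}$ (a priori meaningful only for $f\in\mathscr{H}$) to the finite-sum expression valid for arbitrary $f$, and then the pointwise identification $g=f$: one must invoke continuity of point evaluations in the RKHS to move the (finite, for fixed $x$) sum inside $\langle k_x,\cdot\rangle_{\mathscr{H}}$, rather than assume it. I expect this bookkeeping around well-definedness to be the main obstacle, everything else being forced by the triangular structure.
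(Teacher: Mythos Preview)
Your proof is correct and complete. Note, however, that the paper does not actually supply its own proof of this lemma: it is quoted from \cite{AJ15} without argument. Your approach---inverting the triangular Pascal relation $k_x=\sum_{n\le x}\binom{x}{n}e_n$ to obtain $e_n=\sum_{x\le n}(-1)^{n-x}\binom{n}{x}k_x$, and then reading off (i)--(v) from this---is exactly the machinery the paper invokes immediately afterward (the next lemma, also cited from \cite{AJ15}, records precisely the identity $\delta_{m,n}=\sum_j(-1)^{m+j}\binom{n}{j}\binom{j}{m}$ and the inverse $(L^{(n)})^{-1}$ that you use), and \corref{bino} applies it in the same way. So your argument is not only correct but is the intended one; the care you flag around (iv)--(v), namely that $\langle e_k,f\rangle_{\mathscr{H}}$ reduces to the finite sum $\sum_{x=0}^{k}(-1)^{k-x}\binom{k}{x}f(x)$ and that the pointwise identification $g(x)=f(x)$ uses only the finitely many terms $k\le x$, is exactly the point that makes the extension to $\mathscr{F}unc(V)$ legitimate.
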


Fix $x_{1}\in V$, then we shall apply Lemma \ref{lem:b1} to the
function $f_{1}=\delta_{x_{1}}$ (in $\mathscr{F}unc\left(V\right)$),
$f_{1}\left(y\right)=\begin{cases}
1 & \text{if \ensuremath{y=x_{1}}}\\
0 & \text{if \ensuremath{y\neq x_{1}}.}
\end{cases}$ 
\begin{thm}
\label{thm:bino}We have 
\[
\left\Vert P_{F_{n}}\left(\delta_{x_{1}}\right)\right\Vert _{\mathscr{H}}^{2}=\sum_{k=x_{1}}^{n}\binom{k}{x_{1}}^{2}.
\]

\end{thm}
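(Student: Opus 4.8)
The plan is to work entirely in the orthonormal basis $\left\{e_k\right\}_{k\in V}$ furnished by \lemref{b1}. Since $P_{F_n}$ is the orthogonal projection onto $\mathrm{span}\left\{e_0,\dots,e_n\right\}$ and the $e_k$ are orthonormal, Pythagoras gives immediately
\[
\left\Vert P_{F_n}\left(\delta_{x_1}\right)\right\Vert_{\mathscr{H}}^2=\sum_{k=0}^{n}\left|\left\langle e_k,\delta_{x_1}\right\rangle_{\mathscr{H}}\right|^2,
\]
where each coefficient $\left\langle e_k,\delta_{x_1}\right\rangle_{\mathscr{H}}$ is meaningful by part (iv) of \lemref{b1}, i.e.\ the coefficient functional $f\mapsto\left\langle e_k,f\right\rangle_{\mathscr{H}}$ extends to all of $\mathscr{F}unc(V)$ even when $\delta_{x_1}\notin\mathscr{H}$. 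Thus the whole theorem reduces to evaluating these coefficients at the Dirac mass.

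The key step is to express $\left\langle e_k,f\right\rangle_{\mathscr{H}}$, for an arbitrary $f$ on $V$, purely in terms of the values $f(0),f(1),\dots,f(k)$. I would exploit that $e_k(x)=\binom{x}{k}$ is lower-triangular and unipotent in the pair $(k,x)$: the Newton expansion
\[
f(x)=\sum_{k=0}^{x}\left\langle e_k,f\right\rangle_{\mathscr{H}}\binom{x}{k}
\]
is a triangular system that inverts term by term, yielding the finite-difference formula
\[
\left\langle e_k,f\right\rangle_{\mathscr{H}}=\sum_{j=0}^{k}(-1)^{k-j}\binom{k}{j}f(j),
\]
the $k$-th forward difference of $f$ at the origin. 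This inversion is exactly the content of the well-definedness asserted in part (iv): $\left\langle e_k,f\right\rangle_{\mathscr{H}}$ depends only on finitely many values of $f$, so the formula makes sense for $\delta_{x_1}$ regardless of membership in $\mathscr{H}$.

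With this in hand the evaluation at $f=\delta_{x_1}$ is immediate, since only the term $j=x_1$ survives:
\[
\left\langle e_k,\delta_{x_1}\right\rangle_{\mathscr{H}}=
\begin{cases}
(-1)^{k-x_1}\dbinom{k}{x_1} & \text{if }k\geq x_1,\\[4pt]
0 & \text{if }k<x_1.
\end{cases}
\]
Squaring annihilates the sign, and summing over $0\leq k\leq n$ discards every term with $k<x_1$, leaving $\sum_{k=x_1}^{n}\binom{k}{x_1}^2$, which is the claimed identity.

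I expect the only genuine obstacle to be the second step: making rigorous the meaning of $\left\langle e_k,\delta_{x_1}\right\rangle_{\mathscr{H}}$ for the non-$\mathscr{H}$ function $\delta_{x_1}$, and verifying that the triangular inversion of the Newton expansion coincides with the coefficient functional of part (iv) of \lemref{b1}. Once that identification is secured, the finite-difference evaluation at the point-mass and the final summation are routine binomial bookkeeping.
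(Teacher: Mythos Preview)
Your argument is correct and rests on the same binomial inversion identity as the paper's, but the packaging is a bit different. You work directly in the orthonormal basis $\{e_k\}$ from \lemref{b1}: since $P_{F_n}=\sum_{k=0}^{n}|e_k\rangle\langle e_k|$, Parseval reduces the problem to computing the Newton coefficients $\langle e_k,\delta_{x_1}\rangle_{\mathscr{H}}$, which you identify with the $k$-th forward difference $(-1)^{k-x_1}\binom{k}{x_1}$. The paper instead invokes the general Gram-matrix formula $\|P_{F_n}(\delta_{x_1})\|_{\mathscr{H}}^{2}=(K_n^{-1})_{x_1,x_1}$ from \lemref{proj2}, then factors $K_n=L_nL_n^{tr}$ with $L_n$ the truncated Pascal matrix, inverts $L_n$ via the same signed-binomial identity (\ref{eq:b5})--(\ref{eq:b7}), and reads off the diagonal entry of $(L_n^{tr})^{-1}L_n^{-1}$. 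The two routes are dual descriptions of one computation: your finite-difference coefficients $\langle e_k,\delta_{x_1}\rangle_{\mathscr{H}}$ are exactly the entries $(L_n^{-1})_{k,x_1}$, and your Parseval sum is exactly the diagonal entry of $(L_n^{tr})^{-1}L_n^{-1}$. Your version is arguably more transparent because it stays in the ONB throughout and never needs the auxiliary formula from \lemref{proj2}; the paper's version, on the other hand, illustrates the general machinery developed earlier for arbitrary kernels.

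Your caveat about ``making rigorous the meaning of $\langle e_k,\delta_{x_1}\rangle_{\mathscr{H}}$'' is well placed but not an obstacle: once you have the finite-difference formula $\langle e_k,f\rangle_{\mathscr{H}}=\sum_{j=0}^{k}(-1)^{k-j}\binom{k}{j}f(j)$, the right-hand side depends only on $f(0),\dots,f(k)$ and so defines the extension to all of $\mathscr{F}unc(V)$ asserted in \lemref{b1}(iv). That this agrees with the paper's formal projection $P_{F_n}(\delta_{x_1})$ from \lemref{proj1} is then automatic, since both are the unique element of $\mathscr{H}_{F_n}$ whose values at $0,1,\dots,n$ match $\delta_{x_1}$.
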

The proof of the theorem will be subdivided in steps; see below. 
\begin{lem}[\cite{AJ15}]
~
\begin{enumerate}[label=(\roman{enumi})]
\item \label{enu:b1}For $\forall m,n\in V$, such that $m\leq n$, we
have 
\begin{equation}
\delta_{m,n}=\sum_{j=m}^{n}\left(-1\right)^{m+j}\binom{n}{j}\binom{j}{m}.\label{eq:b5}
\end{equation}

\item \label{enu:b2}For all $n\in\mathbb{Z}_{+}$, the inverse of the following
lower triangle matrix is this: With (see Figure \ref{fig:L}) 
\begin{equation}
L_{xy}^{\left(n\right)}=\begin{cases}
\binom{x}{y} & \text{if \ensuremath{y\leq x\leq n}}\\
0 & \text{if \ensuremath{x<y}}
\end{cases}\label{eq:b6}
\end{equation}
 we have:
\begin{equation}
\left(L^{\left(n\right)}\right)_{xy}^{-1}=\begin{cases}
\left(-1\right)^{x-y}\binom{x}{y} & \text{if \ensuremath{y\leq x\leq n}}\\
0 & \text{if \ensuremath{x<y}}.
\end{cases}\label{eq:b7}
\end{equation}
 
\end{enumerate}

Notation: The numbers in (\ref{eq:b7}) are the entries of the matrix
$\left(L^{\left(n\right)}\right)^{-1}$. 

\end{lem}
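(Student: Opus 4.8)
The plan is to establish the combinatorial identity in part (i) directly, and then obtain the matrix-inversion formula in part (ii) as a short formal consequence of it.

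For part (i), I would begin with the classical ``subset-of-a-subset'' identity
\[
\binom{n}{j}\binom{j}{m}=\binom{n}{m}\binom{n-m}{j-m},
\]
valid for $m\le j\le n$, which factors the fixed quantity $\binom{n}{m}$ out of the sum and leaves
\[
\sum_{j=m}^{n}(-1)^{m+j}\binom{n}{j}\binom{j}{m}=\binom{n}{m}\sum_{j=m}^{n}(-1)^{m+j}\binom{n-m}{j-m}.
\]
I would then reindex the inner sum with $i=j-m$, observing that $(-1)^{m+j}=(-1)^{2m+i}=(-1)^{i}$, so that it becomes $\sum_{i=0}^{n-m}(-1)^{i}\binom{n-m}{i}=(1-1)^{n-m}$ by the binomial theorem. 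This vanishes when $n>m$ and equals $1$ when $n=m$; since $\binom{m}{m}=1$, the full expression equals $\delta_{m,n}$, which proves part (i).

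For part (ii), I would verify that the matrix $M$ with entries $M_{xy}=(-1)^{x-y}\binom{x}{y}$ for $y\le x\le n$ (and $0$ otherwise) is a left inverse of $L^{(n)}$; this suffices, since $L^{(n)}$ is a finite square lower-triangular matrix with unit diagonal, hence invertible, so any left inverse is the two-sided inverse. Computing entrywise, and using that the supports of the two triangular factors force $z\le y\le x$, I get
\[
\bigl(M\,L^{(n)}\bigr)_{xz}=\sum_{y=z}^{x}(-1)^{x-y}\binom{x}{y}\binom{y}{z}.
\]
The key step is the sign rewriting $(-1)^{x-y}=(-1)^{x+y}=(-1)^{x-z}(-1)^{z+y}$, which pulls the constant factor $(-1)^{x-z}$ outside and turns the remaining sum into exactly an instance of part (i) under the substitution $(m,n,j)\mapsto(z,x,y)$. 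By part (i) that sum equals $\delta_{x,z}$, and since $(-1)^{x-z}\delta_{x,z}=\delta_{x,z}$, we conclude $M\,L^{(n)}=I$, i.e. $M=\bigl(L^{(n)}\bigr)^{-1}$.

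I do not expect a genuine obstacle here, as the content is elementary. The only point requiring care is the sign bookkeeping in part (ii): one must reconcile the $(-1)^{x-y}$ appearing in the matrix product with the $(-1)^{m+j}$ in the statement of part (i), which is precisely why I would factor out $(-1)^{x-z}$ to bring the summand into the exact form of (i). A secondary bookkeeping point is confirming that the triangular supports intersect in the window $z\le y\le x$, so that the sum ranges over exactly the indices needed to invoke (i).
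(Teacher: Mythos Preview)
Your proof is correct and follows the same approach the paper indicates: the paper's own proof is the single sentence ``In rough outline, (ii) follows from (i),'' with the lemma itself attributed to a reference. You have simply supplied the standard details the paper omits, proving (i) via the subset-of-subset identity and the binomial theorem, and then deducing (ii) by a direct matrix computation that reduces to (i).
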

\begin{proof}
In rough outline, \ref{enu:b2} follows from \ref{enu:b1}.\end{proof}
\begin{cor}
\label{cor:bino}Let $k_{b}$, $\mathscr{H}$, and $n\in\mathbb{Z}_{+}$
be as above with the lower triangle matrix $L_{n}$. Set 
\begin{equation}
K_{n}\left(x,y\right)=k_{b}\left(x,y\right),\quad\left(x,y\right)\in F_{n}\times F_{n},\label{eq:b8}
\end{equation}
i.e., an $\left(n+1\right)\times\left(n+1\right)$ matrix. 
\begin{enumerate}[label=(\roman{enumi})]
\item Then $K_{n}$ is invertible with 
\begin{equation}
K_{n}^{-1}=\left(L_{n}^{tr}\right)^{-1}\left(L_{n}\right)^{-1};\label{eq:b9}
\end{equation}
an $(\text{upper triangle})\times(\text{lower triangle})$ factorization. 
\item For the diagonal entries in the $\left(n+1\right)\times\left(n+1\right)$
matrix $K_{n}^{-1}$, we have:
\[
\left\langle x,K_{n}^{-1}x\right\rangle _{l^{2}}=\sum_{k=x}^{n}\binom{k}{x}^{2}
\]

\end{enumerate}

Conclusion\textbf{:} Since 
\begin{equation}
\left\Vert P_{F_{n}}\left(\delta_{x_{1}}\right)\right\Vert _{\mathscr{H}}^{2}=\left\langle x_{1},K_{n}^{-1}x_{1}\right\rangle _{\mathscr{H}}\label{eq:b11}
\end{equation}
for all $x_{1}\in F_{n}$, we get 
\begin{eqnarray}
\left\Vert P_{F_{n}}\left(\delta_{x_{1}}\right)\right\Vert _{\mathscr{H}}^{2} & = & \sum_{k=x_{1}}^{n}\binom{k}{x_{1}}^{2}\nonumber \\
 & = & 1+\binom{x_{1}+1}{x_{1}}^{2}+\binom{x_{1}+2}{x_{1}}^{2}+\cdots+\binom{n}{x_{1}}^{2};\label{eq:b12}
\end{eqnarray}
and therefore, 
\[
\left\Vert \delta_{x_{1}}\right\Vert _{\mathscr{H}}^{2}=\sum_{k=x_{1}}^{\infty}\binom{k}{x_{1}}^{2}=\infty.
\]
In other words, no $\delta_{x}$ is in $\mathscr{H}$.

\end{cor}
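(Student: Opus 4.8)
The plan is to exploit the orthonormal basis $\{e_n\}$ from \lemref{b1} to factor the finite kernel matrix $K_n$, and then read off both assertions from the explicit triangular inverse. First I would note that, since $e_m(x)=\binom{x}{m}$ vanishes for $m>x$, the defining sum for $k_b$ may be extended harmlessly: for $x,y\in F_n=\{0,1,\dots,n\}$ one has $k_b(x,y)=\sum_{m=0}^{n}e_m(x)e_m(y)$. Because $L_n$ is precisely the matrix whose $(x,m)$ entry is $e_m(x)=\binom{x}{m}$ (see (\ref{eq:b6})), this identity reads $K_n=L_nL_n^{tr}$. The matrix $L_n$ is lower triangular with unit diagonal (as $\binom{x}{x}=1$), so $\det L_n=1$ and $L_n$ is invertible; hence $K_n$ is invertible and $K_n^{-1}=(L_n^{tr})^{-1}(L_n)^{-1}=(L_n^{-1})^{tr}(L_n^{-1})$, which is assertion (i).

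For (ii) I would expand a diagonal entry of this product: $\langle x,K_n^{-1}x\rangle_{l^2}=(K_n^{-1})_{xx}=\sum_{k}\big((L_n^{-1})_{kx}\big)^2$. The explicit inversion formula (\ref{eq:b7}) gives $(L_n^{-1})_{kx}=(-1)^{k-x}\binom{k}{x}$ for $x\le k\le n$ and $0$ otherwise, so the signs disappear upon squaring and $(K_n^{-1})_{xx}=\sum_{k=x}^{n}\binom{k}{x}^2$, as claimed.

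To close, I would feed this into the projection identities established earlier. By \lemref{proj2}, $\|P_{F_n}\delta_{x_1}\|_{\mathscr{H}}^2=(K_{F_n}^{-1}\delta_{x_1})(x_1)=(K_n^{-1})_{x_1x_1}$, which by (ii) equals $\sum_{k=x_1}^{n}\binom{k}{x_1}^2$. By \corref{proj1} (equivalently, condition (\ref{eq:d3}) in \thmref{pm}), the membership $\delta_{x_1}\in\mathscr{H}$ is equivalent to the finiteness of $\sup_n\|P_{F_n}\delta_{x_1}\|_{\mathscr{H}}^2$, and when finite the supremum equals $\|\delta_{x_1}\|_{\mathscr{H}}^2$. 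Letting $n\to\infty$, each term satisfies $\binom{k}{x_1}^2\ge 1$ for $k\ge x_1$ (indeed it grows polynomially in $k$), so the series $\sum_{k=x_1}^{\infty}\binom{k}{x_1}^2$ diverges; hence $\|\delta_{x_1}\|_{\mathscr{H}}^2=\infty$ and $\delta_{x_1}\notin\mathscr{H}$. Since $x_1\in V$ was arbitrary, no point mass lies in $\mathscr{H}$.

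The argument is essentially mechanical once the two inputs are in place, so there is no serious analytic obstacle; the one genuine idea is the Cholesky-type identification $K_n=L_nL_n^{tr}$, i.e.\ recognizing the binomial kernel matrix as the Gram matrix of the basis functions $e_m$ encoded column-by-column in $L_n$. The only point demanding a moment's care is the harmless extension of the summation range (the cutoff at $x\wedge y$ versus at $n$), which is exactly what makes the clean factorization valid on all of $F_n\times F_n$.
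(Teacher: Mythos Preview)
Your proof is correct and follows essentially the same route the paper has in mind: the factorization $K_n=L_nL_n^{tr}$ is exactly the content behind part~(i) (implicit in the ONB $\{e_m\}$ of \lemref{b1} and the definition of $L_n$), and part~(ii) together with the Conclusion then drop out from the explicit inverse in (\ref{eq:b7}) combined with \lemref{proj2} and \corref{proj1}/\thmref{pm}, just as you do. The paper states the corollary without a separate proof, so your write-up simply supplies the details the authors leave to the reader.
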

\begin{figure}[H]
\[
L^{\left(n\right)}=\begin{bmatrix}1 & 0 & 0 & 0 & \cdots & \cdots & 0 & \cdots & 0 & 0\\
1 & 1 & 0 & 0 & \cdots & \cdots & 0 & \cdots & 0 & 0\\
1 & 2 & 1 & 0 &  &  & \vdots &  & \vdots & \vdots\\
1 & 3 & 3 & 1 & \ddots &  & \vdots &  & \vdots & \vdots\\
\vdots & \vdots & \vdots & \vdots & \ddots &  & \vdots &  & \vdots & \vdots\\
\vdots & \vdots & \vdots & \vdots &  & 1 & 0 &  & \vdots & \vdots\\
1 & \cdots & \binom{x}{y} & \binom{x}{y+1} & \cdots & * & 1 & \ddots & \vdots & \vdots\\
\vdots & \vdots & \vdots & \vdots &  &  &  & \ddots & 0 & \vdots\\
\vdots & \vdots & \vdots & \vdots &  &  &  &  & 1 & 0\\
1 & \cdots & \binom{n}{y} & \binom{n}{y+1} & \cdots & \cdots & \cdots & \cdots & n & 1
\end{bmatrix}
\]

\caption{\label{fig:L}The matrix $L_{n}$ is simply a truncated Pascal triangle,
arranged to fit into a lower triangular matrix.}
\end{figure}

\subsection{Classical RKHSs with point-mass samples and interpolation}
\begin{defn}
\label{def:pm}Let $\mathscr{H}$ be a RKHS (or a relative RKHS) defined
from a positive definite kernel $k\left(x,y\right)$, $\left(x,y\right)\in V\times V$.
A (discrete) subset $S\subset V$ is said to be a set of \emph{point-mass
samples} iff the span of $\left\{ k_{x}\mid x\in S\right\} $ is dense
in $\mathscr{H}$. \end{defn}
\begin{lem}
Let $V,k$ and $\mathscr{H}$ be as stated in \defref{pm}, and assume
$S\subset V$ is a countable discrete subset; then $S$ is a point-mass
sample set if $\exists\epsilon\in\mathbb{R}_{+}$ such that 
\begin{equation}
\sum_{s\in S}\left|f\left(s\right)\right|^{2}\geq\epsilon\left\Vert f\right\Vert _{\mathscr{H}}^{2},\quad\forall f\in\mathscr{H}.\label{eq:pm1}
\end{equation}
\end{lem}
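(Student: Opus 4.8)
The plan is to unwind \defref{pm}: showing that $S$ is a point-mass sample set amounts to showing that $\mathscr{M}:=\overline{\mathrm{span}}\{k_{x}\mid x\in S\}$ is all of $\mathscr{H}$, and the standard way to establish that a closed subspace is the whole space is to show its orthogonal complement is trivial. So the first step is to fix an arbitrary $f\in\mathscr{H}$ with $f\perp k_{s}$ for every $s\in S$, and aim to conclude $f=0$; this forces $\mathscr{M}^{\perp}=\{0\}$ and hence $\mathscr{M}=\mathscr{H}$.

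The second step is to translate the orthogonality condition into a pointwise vanishing condition using the reproducing property (\ref{eq:pd31}). Indeed, for each $s\in S$ we have
\[
f\left(s\right)=\left\langle k_{s},f\right\rangle _{\mathscr{H}}=0,
\]
so the assumption $f\perp k_{s}$ for all $s\in S$ is exactly the statement that $f$ vanishes on the sample set $S$.

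The third step is simply to feed this into the hypothesis (\ref{eq:pm1}). Since the estimate is assumed to hold for every element of $\mathscr{H}$, in particular it holds for our $f$, giving
\[
0=\sum_{s\in S}\left|f\left(s\right)\right|^{2}\geq\epsilon\left\Vert f\right\Vert _{\mathscr{H}}^{2}\geq0.
\]
As $\epsilon>0$, this yields $\left\Vert f\right\Vert _{\mathscr{H}}^{2}=0$, i.e. $f=0$, completing the argument.

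I do not expect a genuine obstacle here: the proof is a one-line application of the reproducing kernel identity together with the density-via-trivial-orthogonal-complement principle. The only points worth a remark are that (\ref{eq:pm1}) is a one-sided (lower) frame-type bound, which is all that density requires (an upper bound, needed for a genuine frame or for stable reconstruction, is neither assumed nor needed), and that the convergence of $\sum_{s\in S}\left|f(s)\right|^{2}$ is a non-issue in the relevant case since for the chosen $f$ every term vanishes. One could also phrase the same conclusion contrapositively through \lemref{hf}, but the orthogonal-complement route is the most direct.
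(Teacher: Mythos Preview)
Your proof is correct and follows essentially the same approach as the paper's own proof: take $f$ orthogonal to every $k_{s}$, use the reproducing property to conclude $f(s)=0$ for all $s\in S$, and then apply the estimate (\ref{eq:pm1}) to force $\left\Vert f\right\Vert_{\mathscr{H}}=0$. The paper's version is more terse, but the argument is identical.
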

\begin{proof}
To show that $span\left\{ k_{s}\mid s\in S\right\} $ is dense in
$\mathscr{H}$, we need only verify that if 
\[
0=f\left(s\right)=\left\langle k_{s},f\right\rangle _{\mathscr{H}},\quad\forall s\in S,
\]
then $f\equiv0$ in $\mathscr{H}$. But this conclusion is immediate
from the estimate (\ref{eq:pm1}).\end{proof}
\begin{prop}
\label{prop:riso}Let $k:V\times V\longrightarrow\mathbb{C}$ be positive
definite, and let $\mathscr{H}$ be the corresponding RKHS. Let $S\subset V$
be a set of point-mass samples. Then the following holds for the restricted
kernel function $k^{\left(S\right)}$, defined by 
\begin{equation}
k^{\left(S\right)}\left(s,t\right):=k\left(s,t\right),\quad\forall\left(s,t\right)\in S\times S:\label{eq:r1}
\end{equation}

If $\mathscr{H}^{\left(S\right)}$ denotes the RKHS of $k^{\left(S\right)}$,
then the assignment
\begin{equation}
W^{\left(S\right)}k_{s}^{\left(S\right)}:=k_{s},\quad s\in S\label{eq:r2}
\end{equation}
extends by linearity and norm-closure to an \uline{isometry} $W^{\left(S\right)}$
of $\mathscr{H}^{\left(S\right)}$ \uline{onto} $\mathscr{H}$. \end{prop}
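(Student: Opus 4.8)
The plan is to verify that the prescription (\ref{eq:r2}) is inner-product preserving on the dense subspace $span\left\{ k_{s}^{\left(S\right)}\mid s\in S\right\} \subset\mathscr{H}^{\left(S\right)}$, then extend by continuity, and finally invoke the point-mass-sample hypothesis from \defref{pm} to upgrade the resulting isometric embedding to a surjection. This is structurally the same argument as \lemref{mc1}, the only new ingredient being the density that makes $W^{\left(S\right)}$ onto.

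First I would compute, for finitely supported coefficients $\left\{ c_{s}\right\} ,\left\{ d_{t}\right\} $, the two inner products using the defining formula (\ref{eq:pd3}) in each of the two RKHSs, together with the fact that $k^{\left(S\right)}$ is literally the restriction of $k$, i.e. $k^{\left(S\right)}\left(s,t\right)=k\left(s,t\right)$ for $s,t\in S$. This gives
\[
\left\langle \sum\nolimits _{s}c_{s}k_{s}^{\left(S\right)},\sum\nolimits _{t}d_{t}k_{t}^{\left(S\right)}\right\rangle _{\mathscr{H}^{\left(S\right)}}=\sum\nolimits _{s,t}\overline{c_{s}}d_{t}k\left(s,t\right)=\left\langle \sum\nolimits _{s}c_{s}k_{s},\sum\nolimits _{t}d_{t}k_{t}\right\rangle _{\mathscr{H}}.
\]
This single identity does all of the local work: specializing to $\left\{ c_{s}\right\} =\left\{ d_{s}\right\} $ shows that $W^{\left(S\right)}$ preserves $\mathscr{H}$-norms, which in particular forces well-definedness (any element of $span\left\{ k_{s}^{\left(S\right)}\right\} $ of zero $\mathscr{H}^{\left(S\right)}$-norm is sent to an element of zero $\mathscr{H}$-norm, hence to $0$) and isometry on this span.

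Since $span\left\{ k_{s}^{\left(S\right)}\right\} $ is dense in $\mathscr{H}^{\left(S\right)}$ by the standard construction of the RKHS completion (cf. \defref{d1}), the map extends uniquely by norm-closure to an isometry $W^{\left(S\right)}:\mathscr{H}^{\left(S\right)}\longrightarrow\mathscr{H}$. The remaining step is surjectivity. The range of $W^{\left(S\right)}$ is closed, being the isometric image of the complete space $\mathscr{H}^{\left(S\right)}$, and by (\ref{eq:r2}) it contains every $k_{s}$, $s\in S$. By the hypothesis that $S$ is a set of point-mass samples (\defref{pm}), $span\left\{ k_{s}\mid s\in S\right\} $ is dense in $\mathscr{H}$; a closed subspace containing a dense set is the whole space, so $W^{\left(S\right)}$ maps onto $\mathscr{H}$.

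The only genuinely delicate point is the well-definedness/isometry on the dense span, and even that collapses to the one displayed identity above; everything else is routine completion bookkeeping. The conceptual content is entirely in the last paragraph: the point-mass-sample assumption is exactly what promotes the isometric inclusion of \lemref{mc1} to a unitary isomorphism $\mathscr{H}^{\left(S\right)}\cong\mathscr{H}$.
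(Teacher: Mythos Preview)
Your proof is correct and follows essentially the same approach as the paper's own argument: both verify norm (or inner-product) preservation on finite combinations $\sum\xi_{s}k_{s}^{(S)}$ via (\ref{eq:pd3}) and (\ref{eq:r1}), extend by closure to an isometry, and then use the point-mass-sample hypothesis from \defref{pm} to conclude surjectivity (the paper phrases this last step as $\mathscr{H}\ominus ran(W^{(S)})=0$, while you say a closed subspace containing a dense set is the whole space, which is of course the same thing).
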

\begin{proof}
For finite subsets $F\subset S$, and $\left\{ \xi_{s}\right\} _{s\in F}$,
we have the following:
\begin{equation}
\left\Vert \sum\nolimits _{s\in F}\xi_{s}k_{s}^{\left(S\right)}\right\Vert _{\mathscr{H}^{\left(S\right)}}=\left\Vert \sum\nolimits _{s\in F}\xi_{s}k_{s}\right\Vert _{\mathscr{H}}.
\end{equation}
Hence $W^{\left(S\right)}$ in (\ref{eq:r2}) extends by linearity
and closure to an isometry $W^{\left(S\right)}:\mathscr{H}^{\left(S\right)}\longrightarrow\mathscr{H}$.
(Also see \lemref{mc1}.)

Since the range $ran\left(W^{\left(S\right)}\right)=\left\{ W^{\left(S\right)}h^{\left(S\right)}\mid h^{\left(S\right)}\in\mathscr{H}^{\left(S\right)}\right\} $
is automatically closed in $\mathscr{H}$, we need only prove that
$ran\left(W^{\left(S\right)}\right)$ is dense in $\mathscr{H}$;
i.e., $\mathscr{H}\ominus ran\left(W^{\left(S\right)}\right)=0$.
By (\ref{eq:r1})-(\ref{eq:r2}), we must prove that, if $f\in\mathscr{H}$,
and 
\[
f\left(s\right)=\left\langle k_{s},f\right\rangle _{\mathscr{H}}=0,\quad\forall s\in S,
\]
then $f=0$ in $\mathscr{H}$. But the last conclusion is immediate
from the condition on the set $S$ from \defref{pm}.\end{proof}
\begin{prop}[Interpolation]
 Let $\left(V,k,\mathscr{H}\right)$ be as above, and let $S\subset V$
be a \uline{sample set}, i.e., satisfying the condition in \defref{pm}.
Let $\left(A,B\right)$ be the associated dual pair of operators;
see \lemref{split}. 

Then the following \uline{interpolation formula} holds for $f\in\mathscr{H}$:
\begin{equation}
f=\sum_{s\in S}\left(A^{*}f\right)\left(s\right)k_{s},\label{eq:ip1}
\end{equation}
or equivalently, 
\begin{equation}
f\left(x\right)=\sum_{s\in S}\left(A^{*}f\right)\left(s\right)k\left(x,s\right),\quad\forall x\in V;\label{eq:ip2}
\end{equation}
and convergence in (\ref{eq:ip1})-(\ref{eq:ip2}) holds iff
\begin{equation}
\underset{\left(s,t\right)\in S\times S}{\sum\sum}\overline{\left(A^{*}f\right)\left(s\right)}\left(A^{*}f\right)\left(t\right)k\left(s,t\right)<\infty.\label{eq:ip3}
\end{equation}
When (\ref{eq:ip3}) holds, then $\left\Vert f\right\Vert _{\mathscr{H}}^{2}=\mbox{LHS}_{\left(\ref{eq:ip3}\right)}$. \end{prop}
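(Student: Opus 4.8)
The plan is to read the interpolation coefficients off the symmetric pair, recognize $\sum_{s\in S}(A^{*}f)(s)k_{s}$ as a biorthogonal expansion, and control its convergence through the Gram matrix. First I would record, for $f\in\mathscr{H}$ and $s\in S$, the identity $(A^{*}f)(s)=\langle\delta_{s},A^{*}f\rangle_{l^{2}}=\langle A\delta_{s},f\rangle_{\mathscr{H}}=\langle\delta_{s},f\rangle_{\mathscr{H}}$, using $A\delta_{s}=\delta_{s}$ and the duality of \propref{AB} and \corref{ab}; so the coefficients in (\ref{eq:ip1}) are simply the $\mathscr{H}$-inner products $c_{s}:=\langle\delta_{s},f\rangle_{\mathscr{H}}$. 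Next the reproducing property (\ref{eq:pd31}) gives the biorthogonality $\langle\delta_{s},k_{t}\rangle_{\mathscr{H}}=\delta_{s,t}$. On the dense subspace $span\{k_{t}\mid t\in S\}$ (dense since $S$ is a sample set, \defref{pm}) the formula is then immediate: for a finite combination $\varphi=\sum_{t}b_{t}k_{t}$ one has $\langle\delta_{s},\varphi\rangle_{\mathscr{H}}=b_{s}$, so $\varphi=\sum_{t}(A^{*}\varphi)(t)k_{t}$ trivially, and $\|\varphi\|_{\mathscr{H}}^{2}=\sum\sum\overline{b_{s}}b_{t}k(s,t)$.

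For general $f$ I would pass to the net of finite partial sums $g_{F}:=\sum_{s\in F}c_{s}k_{s}$, $F\in\mathscr{F}(S)$. Since $\langle k_{s},k_{t}\rangle_{\mathscr{H}}=k(s,t)$, one has $\|g_{F}\|_{\mathscr{H}}^{2}=\sum_{(s,t)\in F\times F}\overline{c_{s}}c_{t}k(s,t)$, the truncation of the double sum in (\ref{eq:ip3}). The net $(g_{F})$ is Cauchy in $\mathscr{H}$ exactly when (\ref{eq:ip3}) holds, in which case it converges to some $g\in\mathscr{H}$ whose norm equals $\mbox{LHS}_{(\ref{eq:ip3})}$ by passing to the limit in the quadratic form; testing against $k_{x}$ and using continuity of $\langle k_{x},\cdot\rangle_{\mathscr{H}}$ yields $g(x)=\langle k_{x},g\rangle_{\mathscr{H}}=\sum_{s}c_{s}k(x,s)$, which is the right-hand side of (\ref{eq:ip2}).

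It remains to identify $g$ with $f$, and this is the delicate point. I would use the orthogonal projections $P_{F}$ onto $\mathscr{H}_{F}=span\{k_{s}\mid s\in F\}$: because $S$ is a sample set, $P_{F}\uparrow I$ strongly, so $P_{F}f\to f$ and $\|P_{F}f\|_{\mathscr{H}}^{2}\uparrow\|f\|_{\mathscr{H}}^{2}$. By \lemref{proj} one has $P_{F}f=\sum_{s\in F}(K_{F}^{-1}f|_{F})(s)k_{s}$, and a short computation using biorthogonality gives $(K_{F}^{-1}f|_{F})(s)=\langle P_{F}\delta_{s},f\rangle_{\mathscr{H}}\to\langle\delta_{s},f\rangle_{\mathscr{H}}=c_{s}$; thus the finite-section coefficients converge to the fixed coefficients $c_{s}$. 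The main obstacle is precisely that these finite-section coefficients depend on $F$ and only converge to $c_{s}$ in the limit, while $\{k_{s}\}$ is merely a biorthogonal (possibly overcomplete) system rather than an orthonormal basis, so the expansion is only conditionally convergent; I would close the gap by invoking (\ref{eq:ip3}) to control the tails uniformly and thereby interchange the limit over $F$ with the summation, obtaining $f=\lim_{F}P_{F}f=\sum_{s\in S}c_{s}k_{s}=g$. Once $g=f$ is established, the norm identity $\|f\|_{\mathscr{H}}^{2}=\mbox{LHS}_{(\ref{eq:ip3})}$ is exactly the computation from the second paragraph.
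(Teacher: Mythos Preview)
Your approach is essentially the same as the paper's: both identify the interpolation coefficients via the biorthogonality $\langle\delta_{s},k_{t}\rangle_{\mathscr{H}}=\delta_{s,t}$ together with the duality identity $(A^{*}f)(t)=\langle\delta_{t},A^{*}f\rangle_{l^{2}}=\langle A\delta_{t},f\rangle_{\mathscr{H}}=\langle\delta_{t},f\rangle_{\mathscr{H}}$. The paper's own argument is in fact briefer than yours---it treats only the finite-sum case $f=\sum_{s}C_{s}k_{s}$ and reads off $C_{t}=(A^{*}f)(t)$ directly, leaving the passage to general $f$ and the convergence criterion (\ref{eq:ip3}) implicit; your additional paragraphs on partial sums, the projections $P_{F}$, and the identification $g=f$ supply details the paper does not spell out.
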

\begin{proof}
Suppose $f=\sum_{s\in S}C_{s}k_{s}\left(\in\mathscr{H}\right)$ is
a finite sum-representation; then the coefficients $\left\{ C_{s}\right\} _{s\in S}$
are unique. Indeed, if $t\in S$, then 
\[
\left\langle \delta_{t},f\right\rangle _{\mathscr{H}}=\sum_{s\in S}C_{s}\underset{\delta_{t,s}}{\underbrace{\left\langle \delta_{t},k_{s}\right\rangle _{\mathscr{H}}}=C_{t};\;\mbox{and}}
\]
\[
\left\langle \delta_{t},f\right\rangle _{\mathscr{H}}=\left\langle A\delta_{t},f\right\rangle _{\mathscr{H}}=\left\langle \delta_{t},A^{*}f\right\rangle _{l^{2}}=\left(A^{*}f\right)\left(t\right),\;t\in S.
\]
 
\end{proof}
The next example shows that there are many RKHSs $\left(k,\mathscr{H},V\right)$
which satisfy the condition in \defref{pm} for a variety of countably
discrete sample sets $S\subset V$; but nonetheless, the point-masses
$\delta_{x}$ are not in $\mathscr{H}$, i.e., $\left\Vert \delta_{x}\right\Vert _{\mathscr{H}}=\infty$
for all $x\in V$.
\begin{example}
Let $V=\mathbb{R}$ , and let $\mathscr{H}=\big\{ f\in L^{2}\left(\mathbb{R}\right)\mid\mbox{suppt}\widehat{f}\subset\left[-\frac{1}{2},\frac{1}{2}\right]\big\}$,
where $\widehat{f}$ denotes the Fourier transform. This RKHS is said
to be a band-limited Hilbert space. It is known that then 
\begin{equation}
k\left(x,y\right)=\frac{\sin\pi\left(x-y\right)}{\pi\left(x-y\right)},\quad x,y\in\mathbb{R}\label{eq:sh1}
\end{equation}
is a positive definite kernel turning $\mathscr{H}$ into a RKHS. 

Moreover, $\left\{ k_{n}\mid n\in\mathbb{Z}\right\} $ is then a set
of point-mass samples. In fact, $\left\{ k_{n}\right\} _{n\in\mathbb{Z}}$
is an orthonormal basis (ONB) in $\mathscr{H}$, and 
\begin{equation}
f\left(x\right)=\sum_{n\in\mathbb{Z}}k\left(n,x\right)f\left(n\right),\quad\forall f\in\mathscr{H}\label{eq:sh2}
\end{equation}
holds. Note that (\ref{eq:sh2}) is Shannon's sampling formula, and
we have 
\[
\left\Vert f\right\Vert _{\mathscr{H}}^{2}=\sum_{n\in\mathbb{Z}}\left|f\left(n\right)\right|^{2},\quad\forall f\in\mathscr{H}.
\]
It is also known that in addition to $\mathbb{Z}$, there are many
other choices of discrete point-mass samples for $\mathscr{H}$. 

For related, recent studies of sampling spaces corresponding to irregular
distribution of sample-points, see e.g., \cite{MR3285408,MR3074509}.\end{example}
\begin{acknowledgement*}
The co-authors thank the following colleagues for helpful and enlightening
discussions: Professors Sergii Bezuglyi, Ilwoo Cho, Paul Muhly, Myung-Sin
Song, Wayne Polyzou, and members in the Math Physics seminar at The
University of Iowa.

\bibliographystyle{amsalpha}
\bibliography{ref}
\end{acknowledgement*}

\end{document}